\definecolor{violet}{cmyk}{1,0,0,0}
\numberwithin{equation}{subsection}
\newcommand{\C}{\mathbb{C}}
\newcommand{\R}{\mathbb{R}}
\newcommand{\Z}{\mathbb{Z}}
\renewcommand{\H}{\mathbb{H}}
\newcommand{\fsl}{\mathfrak{sl}}
\newcommand{\Ker}{\mathrm{Ker}}
\newcommand{\Ind}{\mathrm{Ind}}
\newcommand{\mult}{\mathrm{mult}}
\newcommand{\pr}{\mathrm{pr}}
\newcommand{\norm}[1]{\left\vert\!\left\vert {#1} \right\vert\!\right\vert}
\newcommand{\modo}{\;\mathrm{mod}\;}
\newtheorem{thm}{Theorem}[section]
\newtheorem{prop}[thm]{Proposition}
\newtheorem{lemma}[thm]{Lemma}
\newtheorem{defn}{Definition}[section]
\newtheorem{rem}[thm]{Remark}
\newcommand{\ch}{\operatorname{ch}}
\newcommand{\sch}{\operatorname{sch}}
\newcommand{\sdim}{\operatorname{sdim}}
\newcommand{\fg}{\mathfrak{g}}
\newcommand{\fh}{\mathfrak{h}}
\newcommand{\fb}{\mathfrak{b}}
\newcommand{\fn}{\mathfrak{n}}
\newcommand{\vep}{\varepsilon}
\newcommand{\cE}{\mathcal{E}}
\newcommand{\cO}{\mathcal{O}}
\newcommand{\cP}{\mathcal{P}}
\definecolor{mygreen}{cmyk}{1,0,1,0}
\newcommand{\node}[1]{\begin{tikzpicture} \draw [fill=gray!#1] (0,0) circle(0.1); \end{tikzpicture}}
\begin{document}
\title{Hidden $\mathrm{SL}_2(\Z)$-symmetry in $BC_l^{(2)}$}
\author{K. Iohara and Y. Saito}
\address{Universit\'{e} Claude Bernard Lyon 1, CNRS, Institut Camille Jordan UMR 5208, F-69622 Villeurbanne, France}
\email{iohara@math.univ-lyon1.fr}
\address{Department of Mathematics, Rikkyo University, Toshima-ku, Tokyo 171-8501, Japan}
\email{yoshihisa@rikkyo.ac.jp}

\thanks{Y .S. is partially supported by JSPS KAKENHI Grant Number 20K03568.}

\begin{abstract} A well-known $\Gamma_\theta$-action on the characters of integrable highest weight modules over the affine Lie algebra of type $BC_l^{(2)}$ at a positive level
is extended to an $\mathrm{SL}_2(\Z)$-action at a positive even level by supplementing their twisted characters. 
\end{abstract}


\maketitle

\setcounter{tocdepth}{1}
\tableofcontents

\section*{Introduction}
In 1972, I. G. Macdonald \cite{Macdonald1972} classified the so-called (not necessarily reduced) affine root systems and obtained some identities on the Fourier expansions of products of the Dedekind $\eta$-function associated with each affine root systems. The identities he obtained are named after him. Independently, F. Bruhat and J. Tits \cite{BruhatTits1972} classified affine root systems, where some of their results are explained in a survey article \cite{Tits1979}.  

To obtain the Macdonald identities, he started from an identity, that looks like a denominator identity up to some factor, called ``missing factor''. Later, it was shown by several authors (see, e.g., our expository note \cite{IoharaSaito202} for the references), that the ``missing factor'' corresponds to the imaginary roots and the identity Macdonald used is the denominator identity of an affine Lie algebra. It would be a natural question to see what are the nature of the characters of integrable highest weight modules over an affine Lie algebra. Indeed, E. Looijenga \cite{Looijenga1976/77} interpreted 
a character of an affine Lie algebra of untwisted type as a section of a line bundle over an abelian variety, which is a product of an elliptic curve. This fact inspired V. G. Kac and D. H. Peterson, indeed, in 1984, they \cite{KacPeterson1984} had studied ``modular properties'' of the characters of affine Lie algebras. 
They showed the following:
\begin{enumerate}
\item for an untwisted affine Lie algebra, the group $\mathrm{SL}_2(\Z)$ acts on the $\C$-span of the normalized characters of each fixed central charge and 
\item for a twisted affine Lie algebra, a congruence subgroup of $\mathrm{SL}_2(\Z)$ acts on the $\C$-span of the of the normalized characters of each fixed central charge
\end{enumerate}
via ``modular transformation''. For $BC_l^{(2)}$ (or $A_{2l}^{(2)}$), this congruence subgroup is the so-called $\theta$-subgroup $\Gamma_\theta$. Here, the characters have been regarded as functions on a certain finite dimensional complex subdomain $Y$ of the affine Lie algebra of type $BC_l^{(2)}$. 

In Macdonald's paper \cite{Macdonald1972}, he considered several possible evaluations of the denominator identities. Indeed, for $BC_l^{(2)}$, in its Appendix 1,  Type $BC_l$, (6), he treated $4$ different evaluations which concluded with $4$ different $\eta$-products. The case treated by V. Kac and D. Peterson corresponds to the case (b) (cf. Remark \ref{rem:Macdonald-id}). By computing the modular transformations of these $4$ $\eta$-products, we observed that there seems to be a bigger hidden symmetry in $BC_l^{(2)}$, which inspired us to work on the subject of this article. 

In this article, we show that the group $\mathrm{SL}_2(\Z)$ acts on the vector space spanned by the characters and twisted-characters of integrable highest weight modules over the affine Lie algebra $\fg$ of type $BC_l^{(2)}$, for each fixed \textit{even} level.  Here, we should consider the characters and twisted-characters as functions on a complex subdomain of the dual of the Cartan subalgebra of $\fg$ and introduce \textit{two} coordinate systems dependent on the choice of special indices. 
Moreover, we observe that the characters and twisted-characters of the affine Lie algebra of type $BC_l^{(2)}$ at \textit{even} level can be identified with the characters and super-characters of the affine Lie superalgebra of type $B^{(1)}(0,l)$  ! 

This article is organized as follows. In Section $1$, we recall some basic notions and setting on $BC_l^{(2)}$, in particular, depending on the choice of a special index (see Definition \ref{defn:special-index}), we present two coordinate systems of a complex domain in the dual of the Cartan subalgebra of the affine Lie algebra of type $BC_l^{(2)}$. 
In Section $2$, we recall the definition of the (twisted-)character of an object of the category $\cO$ of Bernstein-Gelfand-Gelfand. In Sections 3, we introduce theta-series and twisted invariants of the affine Weyl group of type $BC_l^{(2)}$ and express the normalized (twisted-)characters in terms these (twisted-)invariants. In Section $4$, we compute the modular transformations of (twisted-)characters. Finally, in Section 5, we interpret the computational results obtained in the previous sections. We conclude with an appendix explaining some relevant basic facts about Lie superalgebras, for the sake of reader's convenience.

\section{Kac-Moody Lie algebras of type $BC_l^{(2)}$}\label{sect:BC_l}
In this article, we deal with the affine root system of type $BC_l^{(2)}\ (l\geq 1)$
in the sense of K. Saito \cite{Saito1985}. Since 
there are many nomenclatures for affine root systems, we give   
the correspondence between other nomenclatures.
\vskip 3mm
\begin{center}
\begin{tabular}{|c||c|c|c|c|c||c|c|c|} \hline
Saito $($\cite{Saito1985}$)$ &
Kac $($\cite{Kac1969}$)$ &  Moody $($\cite{Moody1969}$)$
& Macdonald $($\cite{Macdonald1972}$)$ & 
Carter $($\cite{Carter2005}$)$\\ \hline
$BC_l^{(2)}$ & $A_{2l}^{(2)}$ &$\begin{matrix}
{\tiny A_{1,2}} & \text{\tiny $l=1$}\\ 
{\tiny BC_{l,2}} & \text{\tiny $l>1$} 
\end{matrix}$ & $BC_l=BC_l^\vee$ & $\widetilde{C}_l'$ \\ \hline
\end{tabular}
\end{center} 
\vskip 3mm
\subsection{Root datum}\label{subsect:root-data}
For a positive integer $l$, let $A=(a_{i,j})$ be an 
$(l+1)\times(l+1)$ generalized Cartan matrices (GCM for short) of type $BC_l^{(2)}$:
\[A=\begin{pmatrix}
2 & -1\\
-4 & 2
\end{pmatrix}\quad (l=1),\quad A=
{\small
\begin{pmatrix}
2  & -1 & &&\\
-2 & 2 & -1 &&\\
& -1 & 2  & -1 &&\\
&&\ddots & \ddots & \ddots \\
&&& -1 & 2  & -1 \\
&&&  & -2  & 2 
\end{pmatrix}}\quad (l\geq 2).
\]
In the following, we construct a realization (in the sense of Kac \cite{Kac1990}) of 
$A$ (see \eqref{defn:Cartan} below).\\

Let $F_f$ be an $l$-dimensional real vector space equipped with a positive definite 
symmetric bilinear form $I_f:F_f\times F_f\longrightarrow \R$. 
Fix an orthonormal basis $\vep_1,\ldots,\vep_l$ of $F_f$ with respect to $I_f$.
Consider a $2$-dimensional extension 
$\widehat{F}:=F_f\bigoplus \R\delta \bigoplus \R \gamma$ equipped with a 
non-degenerate symmetric bilinear form $\widehat{I}$ on $\widehat{F}$ defined by
\[\widehat{I}\big|_{F_f\times F_f}=I_f,\quad
\widehat{I}(\vep_i,\delta)=\widehat{I}(\vep_i,\gamma)=0 \ (1\leq i\leq l),\]
\[\widehat{I}(\delta,\delta)=\widehat{I}(\gamma,\gamma)=0,\quad
\widehat{I}(\delta,\gamma)=1.\]
Since $\widehat{I}$ is non-degenerate, there is an induced isomorphism 
$\kappa:\widehat{F}\xrightarrow{\sim}\widehat{F}^\ast:=\mathrm{Hom}_\R(\widehat{F},\R)$ 
defined by
\begin{equation*}
\langle \kappa(u),v\rangle=\widehat{I}(u,v)\quad \text{for }u,v\in\widehat{F},
\end{equation*}
where $\langle \cdot,\cdot\rangle:\widehat{F}^\ast \times \widehat{F}\longrightarrow \R$
is a canonical pairing. 
Set 
\begin{equation}\label{defn:Cartan}
\mathfrak{h}=\widehat{F}^\ast\otimes\C.
\end{equation}
Then, its complex dual space $\mathfrak{h}^\ast:=\mathrm{Hom}_\C(\mathfrak{h},\C)$ 
is naturally identified with $\widehat{F}\otimes \C$. The complexification of the 
isomorphism $\kappa:\widehat{F}\xrightarrow{\sim}\widehat{F}^\ast$ is denoted by 
the same symbol $\kappa:\fh^\ast\overset{\sim}{\longrightarrow}\fh$. 

\begin{rem}
In Kac's book \cite{Kac1990}, our 
$\kappa^{-1}:\fh\overset{\sim}{\longrightarrow}\fh^\ast$ is denoted by $\nu$.
\end{rem}

Set 
\[\alpha_0=\delta-2\vep_1,\quad 
\alpha_i=\vep_i-\vep_{i+1}\ \ (1\leq i<l),\quad
\alpha_l=\vep_l,\]
and define a linearly independent subset $\Pi$ of $\fh^\ast$ (called 
the set of simple roots) by
\[\Pi=\{\alpha_0,\ldots, \alpha_l\}.\]
For a non-isotropic vector $\alpha\in\widehat{F}$, denote 
$\alpha^\vee=2\alpha/\widehat{I}(\alpha,\alpha)$. Set 
\[\Pi^\vee=\{h_0\ldots, h_l\}
\subset \fh
\quad \text{where }h_i:=
\kappa\big(\alpha_i^\vee \big)\ \ (0\leq i\leq l).\]
It is called the set of simple coroots. Hence, the triple 
$\big(\fh,\Pi,\Pi^\vee\big)$ is a 
realization of a GCM $A$. Its Dynkin
diagram is given as follows.
\begin{center}
\begin{tikzpicture}
\draw (0,0) circle (0.1); \draw (0,-0.1) node[below] {$\alpha_0$};
\draw (0.1, 0) -- (0.9,0); 
\draw (0.45, 0.1) -- (0.55,0); 
\draw (0.45, -0.1) -- (0.55,0); 
\draw (0.5, 0.1) node[above] {$4$}; 
\draw (1,0) circle (0.1);  \draw (1,-0.1) node[below] {$\alpha_1$};
\draw (1.5,0) node[right] {$(l=1)$,};
\end{tikzpicture}
\quad
\begin{tikzpicture}
\draw (0,0) circle (0.1); \draw (0,-0.1) node[below] {$\alpha_0$};
\draw (0.45, 0.1) -- (0.55,0); 
\draw (0.45, -0.1) -- (0.55,0); 
\draw (0.5,0.1)  node[above] {$2$};
\draw (0.1,0) -- (0.9,0) ;
\draw (1,0) circle (0.1);  \draw (1,-0.1) node[below] {$\alpha_1$};
\draw (1.1,0) -- (1.9,0); 
\draw (2,0) circle (0.1); 
\draw [dashed] (2.1,0) -- (3.9,0);
\draw (4,0) circle (0.1); 
\draw (4.1,0) -- (4.9,0); 
\draw (5,0) circle (0.1); \draw (5,-0.1) node[below] {$\alpha_{l-1}$};
\draw (5.1,0) -- (5.9,0);
\draw (6,0) circle (0.1);  \draw (6,-0.1) node[below] {$\alpha_{l}$};
\draw (5.45,0.1) -- (5.55,0) ;
\draw (5.45,-0.1) -- (5.55,0) ;
\draw (5.5,0.1) node[above] {$2$};
\draw (6.5,0) node[right] {$(l\geq 2)$,};
\end{tikzpicture}
\end{center}

The labels $a_i\ (0\leq i\leq l)$ and co-labels $a_i^\vee\ (0\leq i\leq l)$ of $A$
are by definition, relatively prime positive integers satisfying
\[\begin{aligned}
&A{}^t\mathbf{a}={}^t\mathbf{0}\quad \text{where }
\mathbf{a}:= (a_0,\ldots,a_l),\ 
\mathbf{0}:=(0,\ldots,0)\quad\text{and}\\
&\mathbf{a}^\vee A=\mathbf{0}\quad \text{where }
\mathbf{a}^\vee:= (a_0^\vee,\ldots,a_l^\vee),
\end{aligned}\]
respectively. Explicit forms of them are given by
\[a_i=\begin{cases}
1 & (i=0),\\
2 & (1\leq i\leq l),
\end{cases}\quad 
a_i^\vee=\begin{cases}
2 & (0\leq i\leq l-1),\\
1 & (i=l).
\end{cases}\]
It is known that the following formula holds:
\begin{equation}\label{eq:delta}
\delta=\sum_{i=0}^l a_i\alpha_i.
\end{equation}

\subsection{Kac-Moody Lie algebras and their roots}
\subsubsection{}
Let $\fg=\fg(A)$ be the Kac-Moody Lie algebra associated with the generalized Cartan 
matrix $A$ of type $BC_l^{(2)}$. It is a Lie algebra
over $\C$ generated by $e_i,f_i\ (0\leq i\leq l),h\in \mathfrak{h}$ 
subject to the following relations:
\begin{itemize}
\item[(L1)] $[h,h']=0$ for $h,h'\in \mathfrak{h}$,
\vskip 1mm
\item[(L2)] $\left\{\begin{array}{l}
[h,e_i]=\langle h,\alpha_i\rangle \, e_i,\\[3pt] 
\lbrack h,f_i\rbrack=-\langle h,\alpha_i\rangle \, f_i,
\end{array}\right.$ for
$h\in\mathfrak{h}$ and $0\leq i\leq l$,
\vskip 1mm
\item[(L3)] $[e_i,f_j]=\delta_{i,j}h_i$ for $0\leq i,j\leq l$,
\vskip 1mm
\item[(L4)] $\left\{\begin{array}{l}
\mathrm{ad}(e_i)^{1-a_{i,j}}(e_j)=0,\\[3pt] 
\mathrm{ad}(f_i)^{1-a_{i,j}}(f_j)=0,
\end{array}\right.$ for $0\leq i\ne j\leq l$.
\end{itemize} 
The abelian subalgebra $\fh$ of $\fg$ is called the Cartan subalgebra of $\fg$.\\

In the rest of this article, we use standard notations in the theory of Kac-Moody Lie
algebras following Kac's book \cite{Kac1990}.  
Namely, let $\fn_{\pm}$ be 
the subalgebra of $\fg$ generated by $\{e_i\}_{0\leq i\leq l}$ 
(resp. $\{f_i\}_{0\leq i\leq l}$). We have the triangular decomposition of $\fg$:
\begin{equation}\label{eq:triangular-decomp}
\fg=\fn_+\bigoplus \fh\bigoplus \fn_-.
\end{equation}
Let $\Delta$ be the set of roots, $\Delta_{\pm}$ the set of positive (resp. negative) 
roots and $Q$ the root lattice. The Weyl group
$W$ is the subgroup of $\mathrm{O}(\fh^\ast,\widehat{I})$ generated by
simple reflections $s_\alpha\ (\alpha\in \Pi)$. 

Let $\Delta^{re}=W.\Pi$ (resp. $\Delta^{im}=\Delta\setminus \Delta^{re}$) be the set of 
real (resp. imaginary) roots. Their explicit forms are given by
\begin{equation}\label{eq:real-roots}
\Delta^{re}=\Delta_s^{re}\amalg \Delta_m^{re} \amalg \Delta_l^{re}\quad \text{and}\quad
\Delta^{im}=\Z\delta
\end{equation}
where
\begin{align*}
\Delta_s^{re}&=\{\pm \vep_i+r\delta\,|\,1\leq i\leq l,\, r\in\Z\},\\
\Delta_m^{re}&=\{\pm \vep_i\pm \vep_j+r\delta\,|\,1\leq i<j\leq l,\, r\in\Z\},\\
\Delta_l^{re}&=\{\pm2 \vep_i+(2r+1)\delta\,|\,1\leq i\leq l,\, r\in\Z\}.
\end{align*}

%
\subsubsection{}
\begin{defn}\label{defn:special-index}
An index $0\leq i_0\leq l$ of the set $\Pi$ of simple roots is called \textbf{special} 
if  there exist $p\in\Z_{>0}$ and 
$\alpha\in \Delta_+$ such that $\delta-a_{i_0}\alpha_{i_0}=p\alpha$. 
\end{defn}

There are two special indices of $\Pi$. 
\begin{itemize}
\item[(i)] Since $\delta-a_0\alpha_0=2\vep_1$ and $\vep_1\in\Delta$, 
the index $0$ is special. 
\vskip 1mm
\item[(ii)] 
Since $\delta-a_l\alpha_l=\delta-2\vep_l\in\Delta$, the index $l$ is special. 
\end{itemize}
\vskip 1mm
\noindent
{\bf (i)} We call the index $i_0=0$ the special index of type 
$\mathrm{I}$. For later convenience,  we denote
$\vep_i^{(\mathrm{I})}=\vep_i\ (1\leq i\leq l)$, 
$\alpha_i^{(\mathrm{I})}=\alpha_i\ (0\leq i\leq l)$ and 
$F_f^{(\mathrm{I})}=F_f$. 
It follows that  
\begin{equation}
F_f^{(\mathrm{I})}=\bigoplus_{i=1}^l \R\vep_i^{(\mathrm{I})}
=\bigoplus_{i=1}^l \R\alpha_i^{(\mathrm{I})}\quad\text{and}\quad
F:=\bigoplus_{i=0}^l\R \alpha_i^{(\mathrm{I})}=F_f^{(\mathrm{I})}\bigoplus \R\delta.
\end{equation}
\vskip 1mm
\noindent
{\bf (ii)} We call the index $i_0=l$ the special index of 
type $\mathrm{I\hspace{-1.2pt}I}$. Replace the numeration of 
simple roots with the following:
\[\alpha_i^{(\mathrm{I\hspace{-1.2pt}I})}:=\alpha_{l-i}\quad (0\leq i\leq l). \]
In this new numeration, the index $0$ is special and the corresponding 
Dynkin diagram is given by 
\begin{center}
\begin{tikzpicture}
\draw (0,0) circle (0.1); \draw (0,-0.1) node[below] 
{$\alpha_0^{(\mathrm{I}\hspace{-1.2pt}\mathrm{I})}$};
\draw (0.1, 0) -- (0.9,0); 
\draw (0.55, 0.1) -- (0.45,0); 
\draw (0.55, -0.1) -- (0.45,0); 
\draw (0.5, 0.1) node[above] {$4$}; 
\draw (1,0) circle (0.1);  \draw (1,-0.1) node[below] 
{$\alpha_1^{(\mathrm{I}\hspace{-1.2pt}\mathrm{I})}$};
\draw (1.5,0) node[right] {$(l=1)$,};
\end{tikzpicture}
\quad
\begin{tikzpicture}
\draw (0,0) circle (0.1); \draw (0,-0.1) node[below] 
{$\alpha_0^{(\mathrm{I}\hspace{-1.2pt}\mathrm{I})}$};
\draw (0.55, 0.1) -- (0.45,0); 
\draw (0.55, -0.1) -- (0.45,0); 
\draw (0.5,0.1)  node[above] {$2$};
\draw (0.1,0) -- (0.9,0) ;
\draw (1,0) circle (0.1);  \draw (1,-0.1) node[below] 
{$\alpha_1^{(\mathrm{I}\hspace{-1.2pt}\mathrm{I})}$};
\draw (1.1,0) -- (1.9,0); 
\draw (2,0) circle (0.1); 
\draw [dashed] (2.1,0) -- (3.9,0);
\draw (4,0) circle (0.1); 
\draw (4.1,0) -- (4.9,0); 
\draw (5,0) circle (0.1); \draw (5,-0.1) node[below] 
{$\alpha_{l-1}^{(\mathrm{I}\hspace{-1.2pt}\mathrm{I})}$};
\draw (5.1,0) -- (5.9,0);
\draw (6,0) circle (0.1);  \draw (6,-0.1) node[below] 
{$\alpha_{l}^{(\mathrm{I}\hspace{-1.2pt}\mathrm{I})}$};
\draw (5.55,0.1) -- (5.45,0) ;
\draw (5.55,-0.1) -- (5.45,0) ;
\draw (5.5,0.1) node[above] {$2$};
\draw (6.5,0) node[right] {$(l\geq 2)$.};
\end{tikzpicture}
\end{center}
Set 
\[\vep_i^{(\mathrm{I\hspace{-1.2pt}I})}=-\vep_{l+1-i}+\dfrac12\delta\quad (1\leq i\leq l)
\quad\text{and}\quad F_f^{(\mathrm{I\hspace{-1.2pt}I})}
=\bigoplus_{i=1}^l \R \vep_i^{(\mathrm{I\hspace{-1.2pt}I})}.\]
Since 
\[\alpha_i^{(\mathrm{I\hspace{-1.2pt}I})}=\begin{cases}
\vep_i^{(\mathrm{I\hspace{-1.2pt}I})}-\vep_{i+1}^{(\mathrm{I\hspace{-1.2pt}I})} &
(1\leq i\leq l-1),\\
2\vep_l^{(\mathrm{I\hspace{-1.2pt}I})} & (i=l),
\end{cases}\]
we  have 
\begin{equation}
F_f^{(\mathrm{I\hspace{-1.2pt}I})}
=\bigoplus_{i=1}^l \R\alpha_i^{(\mathrm{I\hspace{-1.2pt}I})}
\quad\text{and}\quad
F=F_f^{(\mathrm{I\hspace{-1.2pt}I})}\bigoplus \R\delta.
\end{equation}

For $\sharp\in\{\mathrm{I},\mathrm{I\hspace{-1.2pt}I}\}$, 
let $\Lambda_0^{(\sharp)}$ be the element of $\fh^\ast$ which satisfies 
\begin{equation}\label{defn:Lambda0}
\widehat{I}\big((\alpha_i^{(\sharp)})^\vee,\Lambda_0^{(\sharp)}\big)
=\delta_{i,0} \quad (0\leq i\leq l),
\quad \widehat{I}(\Lambda_0^{(\sharp)},\Lambda_0^{(\sharp)})=0.
\end{equation}
These conditions determine $\Lambda_0^{(\sharp)}$ uniquely, and the explicit forms
of them are given by
\begin{equation}
\Lambda_0^{(\mathrm{\sharp})}=\begin{cases}
2\gamma & \text{if }\sharp=\mathrm{I},\\
\gamma+\dfrac12(\vep_1+\cdots+\vep_l)-\dfrac{l}{8}\delta & \text{if }
\sharp=\mathrm{I\hspace{-1.2pt}I}.
\end{cases}
\end{equation}
\begin{rem}
By definition, one has
\begin{equation}
\widehat{I}(\delta,\Lambda_0^{(\sharp)})=\begin{cases}
a_0^\vee\,(=2) & \text{it }\sharp=\mathrm{I},\\
a_l^\vee\,(=1) & \text{if }\sharp=\mathrm{I\hspace{-1.2pt}I}. 
\end{cases}
\end{equation}
\end{rem}
Hence, we have the following decompositions of $\fh^\ast$:
\begin{equation}\label{eq:decomposition-dual-Cartan}
\fh^\ast=F_{f,\C}^{(\sharp)}\bigoplus \big(\C\delta\bigoplus \C \Lambda_0^{(\sharp)}\big),
\end{equation}
where $F_{f,\C}^{(\sharp)}:=F_f^{(\sharp)}\otimes\C$. Set 
\begin{equation}\label{defn:c-d}
c=\kappa(\delta)\quad \text{and} \quad
d^{(\sharp)}
=\begin{cases}
a_0\kappa\big(\Lambda_0^{(\mathrm{I})}\big)=
\kappa\big(\Lambda_0^{(\mathrm{I})}\big) & \text{if }\sharp=\mathrm{I},\\[3pt]
a_l\kappa\big(\Lambda_0^{(\mathrm{I\hspace{-1.2pt}I})}\big)=
2\kappa\big(\Lambda_0^{(\mathrm{I\hspace{-1.2pt}I})}\big) & 
\text{if }\sharp=\mathrm{I\hspace{-1.2pt}I}.
\end{cases}
\end{equation}
Then, we have decompositions of $\mathfrak{h}$:
\begin{equation}\label{eq:decomposition-Cartan}
\mathfrak{h}=\Big(\bigoplus_{i=0}^l\C h_i^{(\sharp)}\Big)\bigoplus \C d^{(\sharp)}
=\Big(\bigoplus_{i=1}^l\C h_i^{(\sharp)}\Big)\bigoplus \C c\bigoplus \C d^{(\sharp)}.
\end{equation}

\subsubsection{}
For $\sharp\in\{\mathrm{I},\mathrm{I\hspace{-1.2pt}I}\}$, set 
\begin{equation}\label{defn:Delta_f}
\Delta_f^{(\sharp)}=\Delta^{re} \cap F_{f,\C}^{(\sharp)}. 
\end{equation}
In the following, we study the structure of $\Delta_f^{(\sharp)}$ in detail. 
\vskip 3mm
\noindent
\underline{\bf (i) Case of $\sharp=\mathrm{I}$.}
\vskip 1mm
By the explicit description \eqref{eq:real-roots} of $\Delta^{re}$, we have 
\[\Delta_f^{(\mathrm{I})}=(\Delta_f^{(\mathrm{I})})_s\amalg (\Delta_f^{(\mathrm{I})})_m 
\quad\text{where }
\begin{cases}
(\Delta_f^{(\mathrm{I})})_s:=\{\pm \vep_i\,|\,1\leq i\leq l\},\\[3pt]
(\Delta_f^{(\mathrm{I})})_m:=\{\pm \vep_i\pm\vep_j\,|\,1\leq i<j\leq l\}.
\end{cases}\]
This is the finite root system of type $B_l$ with a simple system 
$\Pi_f^{(\mathrm{I})}:=\{\alpha_1^{(\mathrm{I})},\ldots,\alpha_l^{(\mathrm{I})}\}$. 
\vskip 3mm
\noindent
\underline{\bf (ii) Case of $\sharp=\mathrm{I\hspace{-1.2pt}I}$.}
\vskip 1mm
By definition, the description \eqref{eq:real-roots} of $\Delta^{re}$ can be rewritten as
\begin{align*}
\Delta_s^{re}&=\big\{\pm \vep_i^{(\mathrm{I\hspace{-1.2pt}I})}
+(r+\tfrac12)\delta\,\big|\,1\leq i\leq l,\, r\in\Z\big\},\\
\Delta_m^{re}&=\big\{\pm \vep_i^{(\mathrm{I\hspace{-1.2pt}I})}\pm 
\vep_j^{(\mathrm{I\hspace{-1.2pt}I})}+r\delta\,\big|\,1\leq i<j\leq l,\, r\in\Z\big\},\\
\Delta_l^{re}&=\big\{\pm2 \vep_i^{(\mathrm{I\hspace{-1.2pt}I})}
+2r\delta\,\big|\,1\leq i\leq l,\, r\in\Z\big\}.
\end{align*}
Therefore, we have 
\[\Delta_f^{(\mathrm{I\hspace{-1.2pt}I})}
=(\Delta_f^{(\mathrm{I\hspace{-1.2pt}I})})_m\amalg 
(\Delta_f^{(\mathrm{I\hspace{-1.2pt}I})})_l 
\quad\text{where }
\begin{cases}
(\Delta_f^{(\mathrm{I\hspace{-1.2pt}I})})_m
:=\big\{\pm \vep_i^{(\mathrm{I\hspace{-1.2pt}I})}
\pm \vep_j^{(\mathrm{I\hspace{-1.2pt}I})}\,\big|\,1\leq i<j\leq l\big\},\\[3pt]
(\Delta_f^{(\mathrm{I\hspace{-1.2pt}I})})_l
:=\big\{\pm 2\vep_i^{(\mathrm{I\hspace{-1.2pt}I})} \,\big|\,1\leq i\leq l\big\}.
\end{cases}\]
This is the finite root system of type $C_l$ with a simple system 
$\Pi_f^{(\mathrm{I\hspace{-1.2pt}I})}:=\{\alpha_1^{(\mathrm{I\hspace{-1.2pt}I})},\ldots,
\alpha_l^{(\mathrm{I\hspace{-1.2pt}I})}\}$. \\

Let $\varpi_i^{(\sharp)}\in F_f^{(\sharp)} \,(1\leq i\leq l)$ be fundamental weights
of $\Delta_f^{(\sharp)}$ with respect to the simple system $\Pi_f^{(\sharp)}$.
They are characterized by 
$\widehat{I}\big(\varpi_i^{(\sharp)}, (\alpha_j^{(\sharp)})^\vee\big)=\delta_{i,j}$.
Define
\[\rho_f^{(\sharp)}=\sum_{i=1}^l \varpi_i^{(\sharp)}.\]
Hence, one has
\[\widehat{I}(\rho_f^{(\sharp)},(\alpha_j^{(\sharp)})^\vee)=1\quad 
\text{for every }1\leq j\leq l.\]
\begin{rem}
The explicit forms of them are given as follows{\rm :}
\[\varpi_i^{(\mathrm{I})}=
\begin{cases}
\sum_{j=1}^i\vep_j^{(\mathrm{I})} & (1\leq i\leq l-1),\\
\frac12\sum_{j=1}^l\vep_j^{(\mathrm{I})} & (i=l),
\end{cases}\quad
\varpi_i^{(\mathrm{I\hspace{-1.2pt}I})}
=\sum_{j=1}^i\vep_j^{(\mathrm{I\hspace{-1.2pt}I})}\ \ (1\leq i\leq l),\]  
\[\rho_f^{(\sharp)}=
\begin{cases}
\sum_{i=1}^l\tfrac{2(l-i)+1}{2}\vep_i^{(\mathrm{I})} & \text{if }\sharp=\mathrm{l},\\
\sum_{i=1}^l (l-i+1)\vep_i^{(\mathrm{I\hspace{-1.2pt}I})}
& \text{if }\sharp=\mathrm{I\hspace{-1.2pt}I}.
\end{cases}\]
\end{rem}
\subsection{The affine Weyl group $W$}
In this subsection, we study the structure of the affine Weyl group $W$. 
\subsubsection{}
Let $\mu\in F$ and a non-isotropic $\beta\in F$. Define linear automorphisms 
$t_\mu$ and $s_\beta$ of $\mathfrak{h}^\ast$ by
\[\begin{aligned}
t_\mu(u)&=u+\widehat{I}(u,\delta)\mu-\left\{\widehat{I}(u,\mu)
+\dfrac12 \widehat{I}(\mu,\mu)\widehat{I}(u,\delta)\right\}\delta,\\
s_\beta(u)&=u-\widehat{I}(u,\beta^\vee)\beta,
\end{aligned}\]
for $u\in \mathfrak{h}^\ast$, respectively. The following lemma is easily verified.
\begin{lemma}\label{lemma:translation}
{\rm (1)} Both $t_\mu$ and $s_\beta$ belong to 
$\mathrm{O}(\mathfrak{h}^\ast,\widehat{I})$.
\vskip 1mm
\noindent
{\rm (2)} For every $\mu_1,\mu_2\in F$, one has 
$t_{\mu_1}t_{\mu_2}=t_{\mu_1+\mu_2}$. 
\vskip 1mm
\noindent
{\rm (3)} For every non-isotropic $\beta\in F$, one has 
$s_{\delta-\beta}s_\beta=t_{\beta^\vee}$.
\vskip 1mm
\noindent
{\rm (4)} For $w\in W$ and $\mu\in F$, one has 
$w\circ t_\mu\circ w^{-1}=t_{w(\mu)}$. 
\end{lemma}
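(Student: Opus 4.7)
The plan is to reduce everything to direct algebraic manipulation that exploits the two identities $\widehat{I}(\delta,\delta)=0$ and $\widehat{I}(F,\delta)=0$ built into the construction of $\widehat{F}$; together they say that $\delta$ is isotropic and lies in the radical of $\widehat{I}|_{F\times F}$, which is precisely what makes the shift by $\mu\in F$ in the formula for $t_\mu$ behave like an affine translation on the level surfaces of the linear form $u\mapsto\widehat{I}(u,\delta)$.

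For (1), I would treat the two cases separately. The formula for $s_\beta$ is the standard reflection along a non-isotropic vector, so preservation of $\widehat{I}$ is immediate. For $t_\mu$, I would expand $\widehat{I}(t_\mu(u),t_\mu(v))$ and cancel the cross terms using $\widehat{I}(\mu,\delta)=0$ and $\widehat{I}(\delta,\delta)=0$; these vanishings kill every term that has a $\mu$ or $\delta$ meeting $\delta$ on the other slot, and what remains collapses back to $\widehat{I}(u,v)$. Part (2) then follows by direct composition: since $\widehat{I}(t_{\mu_2}(u),\delta)=\widehat{I}(u,\delta)$, the linear shift coefficients add to $\mu_1+\mu_2$, and the quadratic parts combine via the polarisation identity to reproduce the formula for $t_{\mu_1+\mu_2}$.

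Part (3) is the computation I would carry out most carefully, and it is where I expect any trouble to arise. The key preliminary observation is that $\widehat{I}(\delta-\beta,\delta-\beta)=\widehat{I}(\beta,\beta)$, so $(\delta-\beta)^\vee=\tfrac{2(\delta-\beta)}{\widehat{I}(\beta,\beta)}$. Applying $s_\beta$ first and then $s_{\delta-\beta}$, the $\beta$-components cancel and the $\delta$-coefficient assembles into exactly the combination $\widehat{I}(u,\beta^\vee)+\tfrac{2\widehat{I}(u,\delta)}{\widehat{I}(\beta,\beta)}$ prescribed by $t_{\beta^\vee}(u)$; note that $\widehat{I}(\beta^\vee,\beta^\vee)=4/\widehat{I}(\beta,\beta)$, so the quadratic correction appearing in the definition of $t_{\beta^\vee}$ matches the result of the reflection product on the nose. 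Tracking the $\widehat{I}(\beta,\beta)$ in denominators and the signs of the $\delta$-contributions is the main place where I would double-check the bookkeeping.

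Finally, for (4), I would use two ingredients: $W\subset\mathrm{O}(\fh^\ast,\widehat{I})$, which follows from (1) applied to the simple reflections, and $w(\delta)=\delta$ for every $w\in W$, which holds because $\widehat{I}(\alpha_i,\delta)=0$ for each simple root $\alpha_i$ (true for $\alpha_0=\delta-2\vep_1$ by the isotropy of $\delta$, and for $\alpha_i\in F_f$ when $i\geq 1$ because $\widehat{I}(F_f,\delta)=0$), so each generator $s_{\alpha_i}$ fixes $\delta$. Substituting $w^{-1}(u)$ into the formula for $t_\mu$ and then applying $w$, these two invariances convert each $\widehat{I}(w^{-1}(u),\cdot)$ into $\widehat{I}(u,w(\cdot))$ and leave the $\delta$-factors intact, so the result is visibly $t_{w(\mu)}(u)$.
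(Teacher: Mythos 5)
Your verification is correct and coincides with what the paper intends: the paper offers no proof, stating only that the lemma ``is easily verified,'' and your argument is precisely that direct computation, resting on $\widehat{I}(\delta,\delta)=0$ and $\widehat{I}(F,\delta)=0$. In particular the bookkeeping you flag in (3) checks out: $\widehat{I}(\delta-\beta,\delta-\beta)=\widehat{I}(\beta,\beta)$, the $\delta$-coefficient of $s_{\delta-\beta}s_\beta(u)$ is $\widehat{I}(u,\beta^\vee)+\tfrac{2}{\widehat{I}(\beta,\beta)}\widehat{I}(u,\delta)$, and $\tfrac12\widehat{I}(\beta^\vee,\beta^\vee)=2/\widehat{I}(\beta,\beta)$, so this agrees exactly with the quadratic correction in the definition of $t_{\beta^\vee}$.
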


\subsubsection{}
For $\sharp\in\{\mathrm{I},\mathrm{I\hspace{-1.2pt}I}\}$, let us give a semidirect 
product description of $W$ attached to the choice of the special index.  
Recall the finite root subsystem $\Delta_f^{(\sharp)}$ of 
$\Delta^{re}$ introduced in \eqref{defn:Delta_f}, and let $W_f^{(\sharp)}$ be its 
Weyl group. Define a lattice $M^{(\sharp)}$ of rank $l$ as follows. 
\vskip 3mm
\noindent
(i) First, assume $\sharp=\mathrm{I}$. 
Let $\theta_s^{(\mathrm{I})}=\vep_1^{(\mathrm{I})}$ 
be the highest short root of $\Delta_f^{(\mathrm{I})}$. 
Since  $\alpha_0^{(\mathrm{I})}=\delta-2\theta_s^{(\mathrm{I})}$, we have
\[s_{\alpha_0^{(\mathrm{I})}}s_{\theta_s^{(\mathrm{I})}}=
s_{\delta-2\theta_s^{(\mathrm{I})}}s_{2\theta_s^{(\mathrm{I})}}=
t_{(2\theta_s^{(\mathrm{I})})^\vee}.\]
Therefore, the group $W$ is generated by 
$t_{(2\theta_s^{(\mathrm{I})})^\vee}$ and $W_f^{(\mathrm{I})}$. Set
\[M^{(\mathrm{I})}=\text{$\Z$-span of }W_f^{(\mathrm{I})}
\big((2\theta_s^{(\mathrm{I})})^\vee\big).\]
Since $(2\theta_s^{(\mathrm{I})})^\vee=\vep_1^{(\mathrm{I})}$ and 
$W_f^{(\mathrm{I})}$ is generated by $s_{\alpha_i^{(\mathrm{I})}}\, (1\leq i\leq l)$, 
we have
\[M^{(\mathrm{I})}=\bigoplus_{i=1}^l\Z \vep_i^{(\mathrm{I})}.\]
\vskip 3mm
\noindent
(ii) Second, assume $\sharp=\mathrm{I\hspace{-1.2pt}I}$. 
Let $\theta_l^{(\mathrm{I\hspace{-1.2pt}I})}=2\vep_1^{(\mathrm{I\hspace{-1.2pt}I})}$ 
be the highest (long) root of $\Delta_f^{(\mathrm{I\hspace{-1.2pt}I})}$. 
As $2\alpha_0^{(\mathrm{I\hspace{-1.2pt}I})}
=\delta-\theta_l^{(\mathrm{I\hspace{-1.2pt}I})}$,
we have the group $W$ is generated by 
$t_{(\theta_l^{(\mathrm{I\hspace{-1.2pt}I})})^\vee}$ and 
$W_f^{(\mathrm{I\hspace{-1.2pt}I})}$. Since 
$(\theta_l^{(\mathrm{I\hspace{-1.2pt}I})})^\vee=\vep_1^{(\mathrm{I\hspace{-1.2pt}I})}$ 
and $W_f^{(\mathrm{I\hspace{-1.2pt}I})}$ is generated by 
$s_{\alpha_i^{(\mathrm{I\hspace{-1.2pt}I})}}\, (1\leq i\leq l)$, we have 
\[M^{(\mathrm{I\hspace{-1.2pt}I})}
:=\text{$\Z$-span of }W_f^{(\mathrm{I\hspace{-1.2pt}I})}
\big((\theta_l^{(\mathrm{I\hspace{-1.2pt}I})})^\vee\big)
=\bigoplus_{i=1}^l\Z \vep_i^{(\mathrm{I\hspace{-1.2pt}I})}.\]

By Lemma \ref{lemma:translation}, we have the following lemma.
\begin{lemma}\label{lemma:aff-Weyl}
Set $t(M^{(\sharp)})=\{t_\mu\,|\,\mu\in M^{(\sharp)}\}$. This is an abelian
normal subgroup of $W$, and we have a semidirect
product description of $W${\rm :}
\[W=W_f^{(\sharp)}\ltimes t(M^{(\sharp)}).\] 
\end{lemma}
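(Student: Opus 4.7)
The plan is to verify the two standard conditions for a semidirect product --- that $t(M^{(\sharp)})$ is an abelian normal subgroup of $W$, and that $W_f^{(\sharp)}\cap t(M^{(\sharp)})=\{\id\}$ --- and then read off $W=W_f^{(\sharp)}\ltimes t(M^{(\sharp)})$ using the fact, already recorded just before the statement, that $W$ is generated by $W_f^{(\sharp)}$ together with a single translation $t_{\nu_0}$, where $\nu_0=(2\theta_s^{(\mathrm{I})})^\vee$ or $(\theta_l^{(\mathrm{I\hspace{-1.2pt}I})})^\vee$ according as $\sharp=\mathrm{I}$ or $\mathrm{I\hspace{-1.2pt}I}$.

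Abelianness and subgroup closure of $t(M^{(\sharp)})$ will be immediate from Lemma~\ref{lemma:translation}(2): the map $\mu\mapsto t_\mu$ is a homomorphism from $(F,+)$ into $\mathrm{GL}(\fh^\ast)$, so its restriction to $M^{(\sharp)}$ is an abelian subgroup. To see that $t(M^{(\sharp)})$ in fact sits inside $W$, I would start from the distinguished translation $t_{\nu_0}$, which lies in $W$ by Lemma~\ref{lemma:translation}(3), and sweep out the rest using Lemma~\ref{lemma:translation}(4): conjugating $t_{\nu_0}$ by $w\in W_f^{(\sharp)}\subset W$ produces $t_{w(\nu_0)}$, and since $M^{(\sharp)}$ is by construction the $\Z$-span of the $W_f^{(\sharp)}$-orbit of $\nu_0$, part~(2) of the same lemma then yields $t_\mu$ for every $\mu\in M^{(\sharp)}$. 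Normality now follows directly from Lemma~\ref{lemma:translation}(4): since $w\,t_\mu\,w^{-1}=t_{w(\mu)}$, one need only check that each of the two types of generators of $W$ preserves $M^{(\sharp)}$, which holds by the definition of $M^{(\sharp)}$ for $w\in W_f^{(\sharp)}$ and by part~(2) for a translation.

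The step that requires a genuine computation, and the one I expect to be the main obstacle, is the triviality of the intersection $W_f^{(\sharp)}\cap t(M^{(\sharp)})$. I would test both sides on the vector $\Lambda_0^{(\sharp)}$. On the one hand, combining the reflection formula $s_\beta(u)=u-\widehat{I}(u,\beta^\vee)\beta$ with the defining relation $\widehat{I}((\alpha_i^{(\sharp)})^\vee,\Lambda_0^{(\sharp)})=\delta_{i,0}$ from \eqref{defn:Lambda0} yields $s_{\alpha_i^{(\sharp)}}(\Lambda_0^{(\sharp)})=\Lambda_0^{(\sharp)}$ for every $1\leq i\leq l$, so $W_f^{(\sharp)}$ fixes $\Lambda_0^{(\sharp)}$ pointwise. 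On the other hand, applying the defining formula for $t_\mu$ to $\Lambda_0^{(\sharp)}$ gives
\[
t_\mu(\Lambda_0^{(\sharp)})-\Lambda_0^{(\sharp)}=\widehat{I}(\Lambda_0^{(\sharp)},\delta)\,\mu+(\text{scalar})\,\delta,
\]
whose $F_{f,\C}^{(\sharp)}$-component is the nonzero multiple $a^\vee\mu$ of $\mu$ (with $a^\vee=a_0^\vee$ or $a_l^\vee$ according as $\sharp=\mathrm{I}$ or $\mathrm{I\hspace{-1.2pt}I}$). Matching the two therefore forces $\mu=0$, hence $t_\mu=\id$. With normality, trivial intersection, and the generating statement in hand, the announced semidirect product decomposition follows immediately: every element of $W$ can be rewritten uniquely as $w\cdot t$ with $w\in W_f^{(\sharp)}$ and $t\in t(M^{(\sharp)})$.
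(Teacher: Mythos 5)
Your argument is correct and follows the same route as the paper, which simply asserts the lemma as a consequence of Lemma \ref{lemma:translation} together with the preceding observation that $W$ is generated by $W_f^{(\sharp)}$ and the distinguished translation; you have merely supplied the routine details (normality via conjugation, and trivial intersection via the action on $\Lambda_0^{(\sharp)}$, using $\widehat{I}(\delta,\Lambda_0^{(\sharp)})\neq 0$) that the paper leaves implicit.
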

\subsection{The complex domain $\mathcal{Y}$ and its
coordinate systems}\label{sect:coordinate}
\subsubsection{}
Define a complex domain $\mathcal{Y}\subset \fh^\ast$ by  
\begin{equation}
\mathcal{Y}=\big\{v\in \fh^\ast\,\big|\, \text{Re}\,\widehat{I}(v,\delta)>0\big\}.
\end{equation}
Let $\H$ be the Poincer\'{e} upper half plane. 
For $\sharp=\{\mathrm{I},\mathrm{I\hspace{-1.2pt}I}\}$, define a map 
$\varphi^{(\sharp)}:\mathcal{Y}\longrightarrow Y:=\H\times \C^l\times \C$ as follows.
Let $\varphi_\delta,\ \varphi_i^{(\sharp)}\ (1\leq i\leq l),\ 
\varphi_{\Lambda_0}^{(\sharp)}$ be functions on $\mathcal{Y}$ 
defined by 
\[\varphi_\delta (v)=-\dfrac{\widehat{I}(v,\delta)}{2\pi\sqrt{-1}},\quad
\varphi_i^{(\sharp)}(v)=\dfrac{\widehat{I}(v,\vep_i^{(\sharp)})}{2\pi\sqrt{-1}},\quad
\varphi_{\Lambda_0}^{(\sharp)}(v)=\dfrac{\widehat{I}\Big(v,\tfrac{1}{(a_0^{(\sharp)})^\vee}
\Lambda_0^{(\sharp)}\Big)}{2\pi\sqrt{-1}},\]
for $v\in\mathcal{Y}$, where we set $(a_0^{(\sharp)})^\vee=a_0^\vee$ if $\sharp=\mathrm{I}$ and $(a_0^{(\sharp)})^\vee=a_l^\vee$ if 
$\sharp=\mathrm{I\hspace{-1.2pt}I}$.
Set
\[\varphi^{(\sharp)}:\mathcal{Y}\ni v \ \longmapsto \ 
\big(\varphi_\delta(v),\big(\varphi_1^{(\sharp)}(v),\ldots,\varphi_l^{(\sharp)}(v)\big),
\varphi_{\Lambda_0}^{(\sharp)}(v)\big)\in Y.\]
Since $\varphi^{(\sharp)}$ is an isomorphism of complex domains for each 
$\sharp=\{\mathrm{I},\mathrm{I\hspace{-1.2pt}I}\}$, 
we have the following commutative diagram:
\begin{center}
\begin{tikzpicture}[auto]
\node (a) at (1.5, 1.5) {$\mathcal{Y}$};  
\node (b) at (0,0) {$Y$};
\node (c) at (3, 0) {$Y$.};
\draw[->] (a) to node[swap] {$\scriptstyle \varphi^{(\mathrm{I})}$} (b);
\draw[->] (a) to node {$\scriptstyle \varphi^{(\mathrm{I\hspace{-1.2pt}I})}$} (c);
\draw[->] (0.25,0.1) to node 
{$\scriptstyle \varphi^{(\mathrm{I\hspace{-1.2pt}I})}\circ(\varphi^{(\mathrm{I})})^{-1}$} 
(2.75,0.1);
\draw[->] (2.75,-0.1) to node 
{$\scriptstyle \varphi^{(\mathrm{I})}\circ(\varphi^{(\mathrm{I\hspace{-1.2pt}I})})^{-1}$} 
(0.25,-0.1);
\draw (1.5,0.85) node {$\circlearrowright$};
\end{tikzpicture}
\end{center}

\begin{lemma}
Denote $(\tau,\underline{z},t)\in Y$ where $\tau\in\H, \underline{z}:=(z_1,\ldots,z_l)\in\C^l, t\in\C$. 
For every pair $(\sharp,\flat)$ such that 
$\sharp,\flat\in \{\mathrm{I},\mathrm{I\hspace{-1.2pt}I}\}$ and $\sharp\ne \flat$,
the explicit form of the map $\varphi^{(\sharp)}\circ (\varphi^{(\flat)})^{-1}:
Y\overset{\sim}{\longrightarrow}Y$ is given by
\begin{equation}\label{eq:transition}
(\tau,\underline{z},t)\longmapsto
\left(\tau,\left(-z_l-\dfrac12\tau,\ldots,-z_1-\dfrac12\tau\right),t+\dfrac{l}{8}\tau+
\dfrac12\sum_{i=1}^l z_i\right).
\end{equation}
\end{lemma}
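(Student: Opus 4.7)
The plan is to work entirely at the level of $\mathfrak{h}^{\ast} = \widehat{F}\otimes\C$ and extract the change-of-coordinate formula from linearity of $\widehat{I}(v,\cdot)$, noting that $\widehat{I}(v,\delta)/(2\pi\sqrt{-1}) = -\tau$ for every $v\in\mathcal{Y}$.

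First, I would record the conversion formulae between the two bases of $F$. The excerpt gives $\vep_i^{(\mathrm{I\hspace{-1.2pt}I})} = -\vep_{l+1-i}^{(\mathrm{I})} + \tfrac{1}{2}\delta$; since $\delta$ is isotropic this is self-inverse, yielding $\vep_i^{(\mathrm{I})} = -\vep_{l+1-i}^{(\mathrm{I\hspace{-1.2pt}I})} + \tfrac{1}{2}\delta$. Combining $\Lambda_0^{(\mathrm{I})} = 2\gamma$ with $\Lambda_0^{(\mathrm{I\hspace{-1.2pt}I})} = \gamma + \tfrac{1}{2}\sum_i \vep_i^{(\mathrm{I})} - \tfrac{l}{8}\delta$ gives $\gamma = \tfrac{1}{2}\Lambda_0^{(\mathrm{I})}$, and then substituting the inverse $\vep$-relation yields the twin expressions
\begin{equation*}
\tfrac{1}{2}\Lambda_0^{(\mathrm{I})} \;=\; \Lambda_0^{(\mathrm{I\hspace{-1.2pt}I})} - \tfrac{1}{2}\sum_{i=1}^l \vep_i^{(\mathrm{I})} + \tfrac{l}{8}\delta \;=\; \Lambda_0^{(\mathrm{I\hspace{-1.2pt}I})} + \tfrac{1}{2}\sum_{i=1}^l \vep_i^{(\mathrm{I\hspace{-1.2pt}I})} - \tfrac{l}{8}\delta,
\end{equation*}
the last identity using $\sum_i \vep_i^{(\mathrm{I})} = -\sum_i \vep_i^{(\mathrm{I\hspace{-1.2pt}I})} + \tfrac{l}{2}\delta$.

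Second, I would treat the case $(\sharp,\flat)=(\mathrm{I},\mathrm{I\hspace{-1.2pt}I})$: given $v\in\mathcal{Y}$ with $\varphi^{(\mathrm{I\hspace{-1.2pt}I})}(v)=(\tau,\underline{z},t)$, applying $\widehat{I}(v,\cdot)/(2\pi\sqrt{-1})$ to the expression of $\vep_i^{(\mathrm{I})}$ in the $\mathrm{I\hspace{-1.2pt}I}$-basis yields $\varphi_i^{(\mathrm{I})}(v) = -z_{l+1-i} - \tfrac{\tau}{2}$, and applying it to the second form of the $\Lambda_0$-identity above (which gives $\gamma$ in terms of $\Lambda_0^{(\mathrm{I\hspace{-1.2pt}I})}$ and the $\vep^{(\mathrm{I\hspace{-1.2pt}I})}_i$) yields $\varphi_{\Lambda_0}^{(\mathrm{I})}(v) = t + \tfrac{l}{8}\tau + \tfrac{1}{2}\sum_i z_i$. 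Since $\varphi_{\delta}$ is coordinate-independent, this is exactly the asserted formula.

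Third, for $(\sharp,\flat)=(\mathrm{I\hspace{-1.2pt}I},\mathrm{I})$ I would run the identical computation with the roles of $\mathrm{I}$ and $\mathrm{I\hspace{-1.2pt}I}$ exchanged, using instead $\vep_i^{(\mathrm{I\hspace{-1.2pt}I})} = -\vep_{l+1-i}^{(\mathrm{I})} + \tfrac{1}{2}\delta$ and the first form of the $\Lambda_0$-identity; the symmetry of these relations is precisely why the resulting formula is the same. As a sanity check one can verify directly that the map \eqref{eq:transition} squares to the identity on $Y$, using that $i\mapsto l+1-i$ is an involution and that the $t$-shift cancels after two applications; this is the only computation where the $\tfrac{l}{8}$-coefficient and the signs must be tracked carefully, but it involves no real obstacle.
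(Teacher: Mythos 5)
Your proposal is correct and follows essentially the same route as the paper: both arguments reduce to the basis-change identities $\vep_i^{(\mathrm{I\hspace{-1.2pt}I})}=-\vep_{l+1-i}^{(\mathrm{I})}+\tfrac12\delta$ and $\gamma=\tfrac12\Lambda_0^{(\mathrm{I})}=\Lambda_0^{(\mathrm{I\hspace{-1.2pt}I})}+\tfrac12\sum_i\vep_i^{(\mathrm{I\hspace{-1.2pt}I})}-\tfrac{l}{8}\delta$ together with linearity of $\widehat{I}(v,\cdot)$, the only cosmetic difference being that the paper expands the point $(\varphi^{(\mathrm{I\hspace{-1.2pt}I})})^{-1}(\tau,\underline{z},t)$ in the type-$\mathrm{I}$ basis and reads off coefficients, while you pair $v$ against the converted basis vectors. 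Your observation that the map squares to the identity is exactly how the paper disposes of the second case $(\sharp,\flat)=(\mathrm{I\hspace{-1.2pt}I},\mathrm{I})$.
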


\begin{proof}
First, we give the proof for the case that 
$\sharp=\mathrm{I}$ and $\flat=\mathrm{I\hspace{-1.2pt}I}$. 
For $(\tau,\underline{z},t)\in Y$, we have
\[(\varphi^{(\mathrm{I\hspace{-1.2pt}I})})^{-1}(\tau,\underline{z},t)=
2\pi\sqrt{-1}\left(-\tau \Lambda_0^{(\mathrm{I\hspace{-1.2pt}I})}
+\sum_{i=1}^l z_i \vep_i^{(\mathrm{I\hspace{-1.2pt}I})}+t \delta\right).\]
By definition, we have
{\small
\begin{align*}
&\text{\normalsize the right hand side}\\
&=2\pi\sqrt{-1}\left[-\tau
\left(\dfrac12\Lambda_0^{(\mathrm{I})}+\dfrac12\sum_{i=1}^l\vep_i^{(\mathrm{I})}
-\dfrac{l}{8}\delta
\right)
+\sum_{i=1}^l z_i \left(-\vep_{l+1-i}^{(\mathrm{I})}+\dfrac12\delta\right)+t\delta\right]\\
&=2\pi\sqrt{-1}\left[-\frac12 \tau \Lambda_0^{(\mathrm{I})}
+\sum_{i=1}^l\left(-z_{l+1-i}-\frac12\tau\right)\vep_i^{(\mathrm{I})}
+\left(t+\dfrac{l}{8}\tau+\frac12\sum_{i=1}^l z_i
\right)\delta\right].
\end{align*}}
Thus, we have the formula \eqref{eq:transition} in this case.

Second, by the above result, it follows immediately that
\[\big(\varphi^{(\mathrm{I})}\circ (\varphi^{(\mathrm{I\hspace{-1.2pt}I})})^{-1}\big)^2
=\mathrm{id}_Y.\]  
Therefore, we have 
\[\varphi^{(\mathrm{I\hspace{-1.2pt}I})}\circ (\varphi^{(\mathrm{I})})^{-1}
=\big(\varphi^{(\mathrm{I})}\circ (\varphi^{(\mathrm{I\hspace{-1.2pt}I})})^{-1}\big)^{-1}
=\varphi^{(\mathrm{I})}\circ (\varphi^{(\mathrm{I\hspace{-1.2pt}I})})^{-1}\]
as desired.
\end{proof}
\subsubsection{}
Introduce an isometry 
$\phi\in\mathrm{O}(\mathfrak{h}^\ast,\widehat{I})$ as follows. 
Define $w_0^{A_l}\in \mathrm{O}(\mathfrak{h}^\ast,\widehat{I})$ by
$w_0^{A_l}=s_{\alpha_1^{(\mathrm{I})}}
(s_{\alpha_2^{(\mathrm{I})}}s_{\alpha_1^{(\mathrm{I})}})\cdots
(s_{\alpha_{l-1}^{(\mathrm{I})}}\cdots s_{\alpha_1^{(\mathrm{I})}})$.
One has
\[w_0^{A_l}(\vep_i)=\vep_{l+1-i}\quad \text{for }1\leq i\leq l, \quad
w_0^{A_l}(\delta)=\delta\quad \text{and} \quad 
w_0^{A_l}(\Lambda_0^{(\mathrm{I})})=\Lambda_0^{(\mathrm{I})}\]
by direct computation. Introduce another isometry
$\zeta_{F_f}\in \mathrm{O}(\mathfrak{h}^\ast,\widehat{I})$ by
\[\zeta_{F_f}(\vep_i)=-\vep_i\quad \text{for }1\leq i\leq l, \quad
\zeta_{F_f}(\delta)=\delta\quad \text{and} \quad 
\zeta_{F_f}(\Lambda_0^{(\mathrm{I})})=\Lambda_0^{(\mathrm{I})}.\]
Set 
\begin{equation}
\phi= t_{(\vep_1+\cdots+\vep_l)/2}\circ w_0^A\circ \zeta_{F_f}\in
\mathrm{O}(\mathfrak{h}^\ast,\widehat{I}).
\end{equation}  
The next lemma follows form direct computation.  
\begin{lemma}
One has
\vskip 1mm
\noindent
{\rm (1)} $\phi(\vep_i^{(\mathrm{I})})=\vep_i^{(\mathrm{I\hspace{-1.2pt}I})}$ 
\ $(1\leq i\leq l)$,\ \ 
$\phi(\delta)=\delta$,\ \
$\phi(\Lambda_0^{(\mathrm{I})})=2\Lambda_0^{(\mathrm{I\hspace{-1.2pt}I})}$, 
\vskip 1mm
\noindent
{\rm (2)} $\phi$ is an involution.
\end{lemma}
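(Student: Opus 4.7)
My plan is that both parts reduce to applying the three constituent maps of $\phi$ in order to the basis $\{\vep_1^{(\mathrm{I})},\ldots,\vep_l^{(\mathrm{I})},\delta,\Lambda_0^{(\mathrm{I})}\}$ of $\fh^\ast$ and repeatedly using the defining relations $\vep_i^{(\mathrm{I\hspace{-1.2pt}I})}=-\vep_{l+1-i}+\tfrac12\delta$, $\Lambda_0^{(\mathrm{I})}=2\gamma$, $\Lambda_0^{(\mathrm{I\hspace{-1.2pt}I})}=\gamma+\tfrac12(\vep_1+\cdots+\vep_l)-\tfrac{l}{8}\delta$, together with the explicit formula for $t_\mu$ given in Section~1.3.1. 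Since $w_0^{A_l}$ and $\zeta_{F_f}$ both fix $\delta$ and $\Lambda_0^{(\mathrm{I})}$, only the behavior of $t_\mu$ on these two vectors really matters, and for the $\vep_i$'s the composition $w_0^{A_l}\circ\zeta_{F_f}$ acts simply as $\vep_i\mapsto-\vep_{l+1-i}$.

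For part (1), I would first take $u=\vep_i^{(\mathrm{I})}$: then $(w_0^{A_l}\circ\zeta_{F_f})(u)=-\vep_{l+1-i}$, and since $\widehat{I}(-\vep_{l+1-i},\delta)=0$ while $\widehat{I}(-\vep_{l+1-i},\mu)=-\tfrac12$ for $\mu=(\vep_1+\cdots+\vep_l)/2$, the translation formula gives $t_\mu(-\vep_{l+1-i})=-\vep_{l+1-i}+\tfrac12\delta=\vep_i^{(\mathrm{I\hspace{-1.2pt}I})}$. For $u=\delta$, the formula instantly yields $t_\mu(\delta)=\delta$ since $\widehat{I}(\delta,\delta)=\widehat{I}(\delta,\mu)=0$. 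For $u=\Lambda_0^{(\mathrm{I})}=2\gamma$, one has $\widehat{I}(u,\delta)=2$, $\widehat{I}(u,\mu)=0$, and $\widehat{I}(\mu,\mu)=l/4$, whence
\[
t_\mu(\Lambda_0^{(\mathrm{I})})=\Lambda_0^{(\mathrm{I})}+2\mu-\tfrac{l}{4}\delta=2\gamma+(\vep_1+\cdots+\vep_l)-\tfrac{l}{4}\delta=2\Lambda_0^{(\mathrm{I\hspace{-1.2pt}I})},
\]
where the last equality is just twice the explicit form of $\Lambda_0^{(\mathrm{I\hspace{-1.2pt}I})}$.

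For part (2), it suffices to check $\phi^2$ on the same basis. From (1) the only thing left is to verify $\phi(\vep_i^{(\mathrm{I\hspace{-1.2pt}I})})=\vep_i^{(\mathrm{I})}$ and $\phi(2\Lambda_0^{(\mathrm{I\hspace{-1.2pt}I})})=\Lambda_0^{(\mathrm{I})}$, since $\phi(\delta)=\delta$ is already handled. For $\vep_i^{(\mathrm{I\hspace{-1.2pt}I})}=-\vep_{l+1-i}+\tfrac12\delta$, applying $\zeta_{F_f}$ then $w_0^{A_l}$ gives $\vep_i+\tfrac12\delta$, and then $t_\mu$ (with $\widehat{I}(\vep_i+\tfrac12\delta,\delta)=0$ and $\widehat{I}(\vep_i+\tfrac12\delta,\mu)=\tfrac12$) produces $\vep_i^{(\mathrm{I})}$. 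For $2\Lambda_0^{(\mathrm{I\hspace{-1.2pt}I})}=\Lambda_0^{(\mathrm{I})}+(\vep_1+\cdots+\vep_l)-\tfrac{l}{4}\delta$, applying $\zeta_{F_f}$ and then $w_0^{A_l}$ gives $\Lambda_0^{(\mathrm{I})}-(\vep_1+\cdots+\vep_l)-\tfrac{l}{4}\delta$, after which a direct computation with the translation formula (using pairings $2$, $-l/2$, and $l/4$ respectively) returns $\Lambda_0^{(\mathrm{I})}$.

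There is no conceptual obstacle here; the lemma is verified by a straightforward bookkeeping exercise in the translation formula of Section~1.3.1. The only step that demands attention is evaluating $t_\mu(\Lambda_0^{(\mathrm{I})})$ and $t_\mu$ on the image of $2\Lambda_0^{(\mathrm{I\hspace{-1.2pt}I})}$ in part (2), because these are the vectors with nonzero $\delta$-pairing, so all three terms in the translation formula contribute. Carefully tracking the $\tfrac12\widehat{I}(\mu,\mu)\widehat{I}(u,\delta)\delta$ correction is what makes the constants match up to give precisely $2\Lambda_0^{(\mathrm{I\hspace{-1.2pt}I})}$ and $\Lambda_0^{(\mathrm{I})}$.
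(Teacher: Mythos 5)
Your computation is correct and is exactly the "direct computation" the paper alludes to (the paper gives no details, simply asserting the lemma follows by direct computation): you apply $\zeta_{F_f}$, $w_0^{A_l}$, and then the translation formula for $t_{(\vep_1+\cdots+\vep_l)/2}$ to the basis $\{\vep_i^{(\mathrm{I})},\delta,\Lambda_0^{(\mathrm{I})}\}$ and to its image, and all the constants check out. No gaps.
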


Recall the decomposition \eqref{eq:decomposition-dual-Cartan}
of $\mathfrak{h}^*$, and let $\pi^{(\sharp)}:\mathfrak{h}^\ast \longrightarrow 
F_{f,\C}^{(\sharp)}\subset\mathfrak{h}^\ast$ be the canonical projection
for $\sharp\in\{\mathrm{I},\mathrm{I\hspace{-1.2pt}I}\}$.
The next lemma
is a direct consequence of the previous lemma.
\begin{lemma}
{\rm (1)} We have
\[\phi\circ \pi^{(\mathrm{I\hspace{-1.2pt}I})}= \pi^{(\mathrm{I})}\circ \phi,\quad 
\phi\circ  \pi^{(\mathrm{I})}=\pi^{(\mathrm{I\hspace{-1.2pt}I})}\circ \phi.\]
{\rm (2)} There is the following commutative diagram.
\begin{center}
\begin{tikzpicture}[auto]
\node (a) at (0,1.5) {$\mathcal{Y}$};  
\node (b) at (3.1,1.5) {$\mathcal{Y}$};
\node (c) at (1.5, 0) {$Y$};
\draw[->] (a) to node[swap] {$\scriptstyle \varphi^{(\mathrm{I})}$} (c);
\draw[->] (b) to node {$\scriptstyle \varphi^{(\mathrm{I\hspace{-1.2pt}I})}$} (c);
\draw[->] (0.25,1.6) to node {$\scriptstyle \phi$} (2.75,1.6);
\draw[->] (2.75,1.4) to node 
{$\scriptstyle \phi$} 
(0.25,1.4);
\draw (1.5,0.65) node {$\circlearrowright$};
\end{tikzpicture}
\end{center}

\end{lemma}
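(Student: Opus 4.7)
The plan is to deduce both claims from the preceding lemma, which records $\phi(\vep_i^{(\mathrm{I})}) = \vep_i^{(\mathrm{I\hspace{-1.2pt}I})}$, $\phi(\delta) = \delta$, $\phi(\Lambda_0^{(\mathrm{I})}) = 2\Lambda_0^{(\mathrm{I\hspace{-1.2pt}I})}$ and $\phi^2 = \id$, together with the isometry property $\phi \in \mathrm{O}(\fh^\ast,\widehat{I})$.

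For (1), I would start from the decomposition \eqref{eq:decomposition-dual-Cartan}. The previous lemma shows that $\phi$ bijectively maps $F_{f,\C}^{(\mathrm{I})}$ onto $F_{f,\C}^{(\mathrm{I\hspace{-1.2pt}I})}$ (by sending the basis $\vep_i^{(\mathrm{I})}$ to $\vep_i^{(\mathrm{I\hspace{-1.2pt}I})}$) and $\C\delta \oplus \C\Lambda_0^{(\mathrm{I})}$ onto $\C\delta \oplus \C\Lambda_0^{(\mathrm{I\hspace{-1.2pt}I})}$, so it interchanges the two corresponding direct-sum decompositions of $\fh^\ast$. Writing $v = \pi^{(\mathrm{I})}(v) + v_0$ with $v_0 \in \C\delta \oplus \C\Lambda_0^{(\mathrm{I})}$ and applying $\phi$, the first summand of $\phi(v)$ lies in $F_{f,\C}^{(\mathrm{I\hspace{-1.2pt}I})}$ and the second in $\C\delta \oplus \C\Lambda_0^{(\mathrm{I\hspace{-1.2pt}I})}$, whence $\phi \circ \pi^{(\mathrm{I})} = \pi^{(\mathrm{I\hspace{-1.2pt}I})} \circ \phi$. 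The other identity follows by swapping the roles of $\mathrm{I}$ and $\mathrm{I\hspace{-1.2pt}I}$, or equivalently by conjugating and using $\phi^2 = \id$.

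For (2), I would unravel the commutativity of the diagram into the identity $\varphi^{(\mathrm{I\hspace{-1.2pt}I})} \circ \phi = \varphi^{(\mathrm{I})}$ of maps $\mathcal{Y} \to Y$. Using the isometry property $\widehat{I}(\phi(v),\phi(u)) = \widehat{I}(v,u)$ and the explicit values of $\phi(u)$ for $u \in \{\vep_i^{(\mathrm{I})},\delta,\Lambda_0^{(\mathrm{I})}\}$, each of the three coordinates is checked in one line. The $\delta$- and $\vep_i$-coordinates match at once; for the $\Lambda_0$-coordinate, the factor $2$ coming from $\phi(\Lambda_0^{(\mathrm{I})}) = 2\Lambda_0^{(\mathrm{I\hspace{-1.2pt}I})}$ is cancelled exactly by the normalising factors $(a_0^{(\mathrm{I})})^\vee = 2$ and $(a_0^{(\mathrm{I\hspace{-1.2pt}I})})^\vee = 1$ appearing in the definition of $\varphi_{\Lambda_0}^{(\sharp)}$. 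The reverse $\phi$-arrow in the diagram then commutes for free, since $\phi^2 = \id$.

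The only genuine consistency check is the $\Lambda_0$-bookkeeping just described; everything else is a mechanical translation from the previous lemma and the explicit definition of $\varphi^{(\sharp)}$. I therefore do not foresee any obstacle beyond correctly tracking the coefficient $2$ through the various normalisations.
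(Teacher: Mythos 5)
Your proposal is correct and fills in exactly the verification the paper has in mind: the paper gives no written proof, stating only that the lemma is a direct consequence of the preceding one, and your argument — intertwining the two direct-sum decompositions of $\fh^\ast$ for (1), and a coordinate-by-coordinate check using the isometry property for (2) — is the intended computation. Your tracking of the factor $2$ through $\phi(\Lambda_0^{(\mathrm{I})})=2\Lambda_0^{(\mathrm{I\hspace{-1.2pt}I})}$ against the normalisations $(a_0^{(\mathrm{I})})^\vee=2$ and $(a_0^{(\mathrm{I\hspace{-1.2pt}I})})^\vee=1$ is precisely the one nontrivial point, and you have it right.
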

For later use, we set 
\begin{equation}\label{defn:pr-Y}
\mathrm{pr}^{(\sharp)}=\pi^{(\sharp)} \circ (\varphi^{(\sharp)})^{-1}.
\end{equation}
\subsection{Weight lattice}
For $1\leq j\leq l$ and $\sharp\in\{\mathrm{I},\mathrm{I\hspace{-1.2pt}I}\}$, define 
the $j$-th (affine) fundamental weight 
$\Lambda_j^{(\sharp)}$ (of type $(\sharp)$) by the condition that
\[\widehat{I}\big((\alpha_i^{(\sharp)})^\vee,\Lambda_j^{(\sharp)}\big)=\delta_{i,j}
\quad \text{for every }0\leq i\leq l.\] 
Note that this condition determines $\Lambda_j^{(\sharp)}$ modulo $\C\delta$. 

\begin{rem}\label{rem:fund-weights-1}
By direct computation, we have explicit forms of them as follows.
\[\Lambda_j^{(\mathrm{I})}\equiv 
\begin{cases}
\varpi_j^{(\mathrm{I})}+\Lambda_0^{(\mathrm{I})}\ \mathrm{mod}\, \C\delta & (1\leq j< l),\\
\varpi_l^{(\mathrm{I})}+\tfrac12 \Lambda_0^{(\mathrm{I})}\ \mathrm{mod}\, \C\delta & 
(j=l),
\end{cases}
\quad 
\Lambda_j^{(\mathrm{I\hspace{-1.2pt}I})}\equiv \Lambda_{l-j}^{(\mathrm{I})}\ 
\mathrm{mod}\, \C\delta.\]
\end{rem}
Define subsets $P$ and $P_+$ of $\mathfrak{h}^\ast$ by
\[P=\Big(\bigoplus_{j=0}^l \Z \Lambda_j^{(\mathrm{I})}\Big)+\C\delta\quad
\text{and}\quad
P_+=\Big(\sum_{j=0}^l \Z_{\geq 0} \Lambda_j^{(\mathrm{I})}\Big)+ \C\delta.\]
We call $P$ the weight lattice, and $P_+$ the set of dominant 
weights, respectively.

\begin{rem}
As we already mentioned in Remark \ref{rem:fund-weights-1}, we have
$\Lambda_j^{(\mathrm{I\hspace{-1.2pt}I})}\equiv \Lambda_{l-j}^{(\mathrm{i})}\ 
\mathrm{mod}\, \C\delta$. Therefore, we have
\[P=\Big(\bigoplus_{j=0}^l \Z \Lambda_j^{(\mathrm{I\hspace{-1.2pt}I})}\Big)+\C\delta\quad
\text{and}\quad
P_+=\Big(\sum_{j=0}^l \Z_{\geq 0} \Lambda_j^{(\mathrm{I\hspace{-1.2pt}I})}\Big)
+ \C\delta.\]
\end{rem}

For $\lambda\in P$, the number $k:=\widehat{I}(\delta,\lambda)$ is called
the \textbf{level} of $\lambda$. In the case of type $BC_l^{(2)}$, the levels
of fundamental weights are given in the following table.

 \begin{center}
    \begin{tabular}{|c||c|c|c|c|c|} \hline
  
  $j$ & $0$ & $1$ & $\cdots $ & $l-1$ & $l$ \\ \hline
   level of $\Lambda_j^{(\mathrm{I})}$ & $2$ & $2$ & $\cdots$ & $2$ & $1$  \\ \hline
   level of  $\Lambda_j^{(\mathrm{I\hspace{-1.2pt}I})}$ & 
   $1$ & $2$ & $\cdots$ & $2$ & $2$ \\ \hline  
    \end{tabular}
\end{center}
\vskip 3mm
\noindent
For $k\in\Z_{\geq 0}$, set 
\[P_k=\{\lambda\in P\,|\,\widehat{I}(\delta,\lambda)=k\}
\quad\text{and}\quad
P_{k,+}=P_k\cap P_+.
\]

For $\sharp\in \{\mathrm{I},\mathrm{I\hspace{-1.2pt}I}\}$, define 
$\rho^{(\sharp)}\in P_+$ by
\[\rho^{(\sharp)}=\sum_{j=0}^l\Lambda_j^{(\sharp)}.\]
Since fundamental weights can only be determined modulo $\C\delta$, 
$\rho^{(\sharp)}$ is determined modulo $\C \delta$ as well. Furthermore,  as  
$\Lambda_j^{(\mathrm{I\hspace{-1.2pt}I})}\equiv \Lambda_{l-j}^{(\mathrm{I})}\ 
\mathrm{mod}\, \C\delta$, we have
\[\rho^{(\mathrm{I\hspace{-1.2pt}I})}\equiv \rho^{(\mathrm{I})}\ 
\mathrm{mod}\, \C\delta.\]
That is, $\rho^{(\mathrm{I\hspace{-1.2pt}I})}$ coincides with 
$\rho^{(\mathrm{I})}$ as an affine weight. 

Let $\rho$ be any element of 
$\rho^{(\mathrm{I})}+\C \delta=\rho^{(\mathrm{II})}+\C \delta$. 
Note that, by definition, we have the level of $\rho$ is $2l+1$.

\section{Characters}\label{sect:characters}

\subsection{Algebra $\cE$, formal characters}
Recall the triangular decomposition \eqref{eq:triangular-decomp}
of $\fg$: $\fg=\fn_+ \oplus \fh \oplus \fn_-$.\\

Let $V$ be a $\fh$-diagonalizable module, i.e., 
$V=\bigoplus_{\lambda \in \fh^\ast} V_\lambda$ where 
$V_\lambda:=\{v \in V \, \vert \, 
h.v=\langle h, \lambda\rangle v \quad h \in \fh \, \}$. 
Set $\cP(V):=\{ \lambda \in \fh^\ast \vert V_\lambda\neq \{0\}\}$.
Let $\cO$ be the BGG category of $\fg$-modules, that is, it is the subcategory of 
$\fg$-modules whose objects are $\fh$-diagonalizable $\fg$-modules 
$V=\bigoplus_{\lambda \in \fh^\ast} V_\lambda$ satisfying
\begin{enumerate}
\item[(i)] $\dim V_\lambda< \infty$ for any $\lambda \in \cP(V)$, 
\item[(ii)] there exists $\lambda_1, \lambda_2, \cdots, \lambda_r \in \fh^\ast$ 
such that $\cP(V) \subset \bigcup_{i=1}^r (\lambda_i -\Z_{\geq 0}\Pi)$.
\end{enumerate}
A typical object of this category is a so-called highest weight module defined as 
follows. We say that a $\fg$-module $V$ is a highest weight module with highest weight 
$\Lambda \in \fh^\ast$ if
\begin{itemize}
\item[(i)] $\dim V_\Lambda=1$ 
\item[(ii)] $\fn_+.V_\Lambda=\{0\}$ and $V=U(\fg).V_\Lambda$.
\end{itemize}
In particular, the last condition implies that $V=U(\fn_-).V_\Lambda$ as a vector 
space, hence
\[ \cP(V) \subset 
\Lambda - \Z_{\geq 0}\Pi. \]
Such an example is given as follows. 
For $\Lambda \in \fh^\ast$, let $\C_\Lambda=\C v_\Lambda$ be the one 
dimensional module over $\fb_+:=\fh \oplus \fn_+$ defined by 
\[ h.v_\Lambda=\langle h, \Lambda \rangle v_\lambda \quad (h \in \fh), \qquad 
\fn_+.v_\Lambda=0. \]
The induced $\fg$-module  $M(\Lambda):=\Ind_{\fb_+}^{\fg}\C_\Lambda$ is called 
the Verma module with highest weight $\Lambda$. It can be shown that for any 
highest weight $\fg$-module $V$ with highest weight $\Lambda \in \fh^\ast$, 
there exists a surjective $\fg$-module map $M(\Lambda) \twoheadrightarrow V$. 
The smallest among such $V$ can be obtained by taking the quotient of 
$M(\Lambda)$ by its maximal proper submodule and the resulting $\fg$-module is 
the irreducible highest weight $\fg$-module with highest weight $\Lambda$, denoted by 
$L(\Lambda)$. 

Let $\cE$ be the formal linear combination of $e^{\lambda}$ 
($\lambda \in \fh^\ast$) with the next condition: 
$\sum_\lambda c_\lambda e^\lambda \in \cE$ $\Rightarrow$ 
$\exists \lambda_1, \lambda_2, \cdots, \lambda_r \in \fh^\ast$ such that
\[ \{ \lambda \vert c_\lambda \neq 0\} \subset 
\bigcup_{i=1}^r (\lambda_i - \Z_{\geq 0}\Pi). \]
We introduce the ring structure on $\cE$ by 
$e^{\lambda} \cdot e^{\mu}:=e^{\lambda+\mu}$. \\

The formal character of $V \in \cO$ is, by definition, the element $\ch V \in \cE$ 
defined by
\[ \ch V=\sum_{\lambda \in \cP(V)} (\dim V_\lambda)e^\lambda. \]
$\ch (\cdot )$ can be viewed as an additive function defined on $\cO$ with 
values in $\cE$. For example, The formal character of the Verma module 
$M(\Lambda)$ is given by
\[ \ch M(\Lambda)=e^{\Lambda} \prod_{\alpha \in \Delta_+}(1-e^{-\alpha})^{-
\mult(\alpha)}, \]
where $\Delta_+$ is the set of positive roots of $\fg$ and 
$\mult(\alpha)=\dim \fg_\alpha$ is the multiplicity of the root $\alpha$. 
Set $\vep(w)=(-1)^{l(w)}$ where $l(w)$ is the length of an element $w \in W$. 
For $\Lambda \in P_+$, the character of $L(\Lambda)$ is known 
as Weyl-Kac character formula and is given by
\begin{equation}\label{eqn:Weyl-Kac} \ch L(\Lambda)=\frac{\sum_{w \in W} \vep(w)e^{w(\Lambda+\rho)-\rho}}
{\prod_{\alpha \in \Delta_+}(1-e^{-\alpha})^{\mult(\alpha)}}, 
\end{equation}
where $\rho \in \fh^\ast$ is  a so-called Weyl vector, i.e., it satisfies $\langle h_i, 
\rho\rangle =1$ for any $0\leq i\leq l$. 
In particular, for $\Lambda=0$, as $L(0)=\C$ is the trivial representation, one 
obtains the so-called denominator identity:
\begin{equation}\label{denominator-id}
\sum_{w \in W} \vep(w) e^{w(\rho)}
=e^\rho \prod_{\alpha \in \Delta_+}(1-e^{-\alpha})^{\mult(\alpha)}. 
\end{equation}
This implies that the Weyl-Kac character formula can be rephrased as follows:
\begin{equation}\label{Weyl-Kac}
\ch L(\Lambda)=\frac{\sum_{w \in W} \vep(w)e^{w(\Lambda+\rho)}}
{\sum_{w \in W} \vep(w)e^{w(\rho)}}.
\end{equation}

\subsection{Nice functions on the Weyl group $W$ and twisted character}
Let $\Delta^{re}=\bigsqcup_{k\in S}O_k$ be the orbit space decomposition of
$\Delta^{re}$ with respect to the $W$-action with the set $S$ 
of parameters. 
Denote the index set of the simple system $\Pi$ by $I=\{0,1,\ldots,l\}$ and set
$I_k=\{i\in I\,|\,\alpha_i\in \Pi\cap O_k\}$. Then, we have a partition 
$I=\bigsqcup_{k\in S}I_k$ of the index set $I$ of $\Pi$.

For any map $\varphi: S \rightarrow \{ \pm 1\}$,  there exists a morphism of groups $\psi: W \twoheadrightarrow \{ \pm 1\}$ verifying 
$\psi(s_{\alpha_i})=\varphi(k)$ for any $i \in I_k$. 
This map is well-defined as  the Weyl group $W$ is a Coxeter group. Let $S_{odd}$ be the subset of $S$ consisting of those $k$ such that for any $i \in I_k$, $I(\alpha_i^\vee,\alpha_j) \in 2\Z$ for any $j \in I$. For a subset $S' \subset S_{odd}$, set $\tau=\bigcup_{k \in S'} I_k$.  Indeed, in our case, $\tau$ can be either $\emptyset$ or $\{l\}$.
The next two maps are typical examples of the above $\psi$:
\begin{enumerate}
\item $\vep(s_{\alpha_i})=-1$ \; $\forall\, i \in I$, 
\item $\vep'(s_{\alpha_i})=1$ for $i\in I \setminus \tau$  and $\vep'(s_{\alpha_i})=-1$ otherwise, i.e., for $i \in \tau$. 
\end{enumerate}
V. Kac introduced the next notion in pp. 100 of \cite{Kac1978}: a morphism of groups 
$\psi: W \rightarrow \{\pm 1\}$ is said to be \textbf{nice} if there exists a morphism of groups  $\bar{\psi}: \Z \Pi  \rightarrow \Z/2\Z$ satisfying 
$\psi(s_\alpha)=(-1)^{\bar{\psi}(\alpha)}$. A necessary and sufficient condition for $\psi$ to be nice is that $\psi(s_{\alpha_i})=-1$ for an 
$i \in I$ only if $i$-th line of the GCM $A$, i.e.,  $a_{i,j}$ is even for any $j \in I$. Hence, $\psi=1$ or $\psi=\vep'$ are examples of nice morphisms. 
\begin{rem} Regard the GCM $A$ as the GCM of the affine Lie superalgebra of type $B^{(1)}(0,l)$. Then, 
for the latter case, i.e., $\psi=\vep'$ with $\tau=\{l\}$, the map $\bar{\psi}$ coincides with the parity map $\vert \cdot \vert\,:\, Q(B^{(1)}(0,l)) \rightarrow \Z/2\Z$.
\end{rem}
In the rest of this article, we always consider the case when $A=(a_{i,j})$ is the GCM of type $BC_l^{(2)}$ and use the nice map $\psi$ 
defined as follows:
\begin{equation} \label{def_psi-map}
\psi: W \longrightarrow \{ \pm 1\}; \qquad s_{\alpha_i^{(\mathrm{I})}} \; \longmapsto \; \begin{cases} \;\; 1 \;\; & 0\leq i<l, \\ \; \; -1\; \; & i=l. \end{cases} 
\end{equation}
\begin{rem}\label{rem:psi-W} By definition, we have $\psi(s_{\alpha_i^{(\mathrm{I\hspace{-1.2pt}I})}})=(-1)^{\delta_{i,0}}$ which implies that $W_f^{(\mathrm{I\hspace{-1.2pt}I})} \subset \Ker \psi$ whereas $W_f^{(\mathrm{I})} \not\subset \Ker \psi$. Indeed, $W_f^{(\mathrm{I})} \cap \Ker \psi \subset W_f^{(\mathrm{I})}$ is a subgroup of index $2$, which is isomorphic to $W(D_l)$. We denote the subgroup $W_f^{(\mathrm{I})} \cap \Ker \psi$, which is generated by the reflections with respect to the middle length roots of $\Delta_f^{(\mathrm{I})}$, by $W_{f;m}^{(\mathrm{I})}$. 
\end{rem}

For a highest weight $\fg(A)$-module $V$ with highest weight $\Lambda$, we define its \textbf{twisted character} $\ch^{\psi}(V)$ as follows:
\[ \ch^{\psi}(V)=\sum_{\lambda \in \cP(V)} (-1)^{\bar{\psi}(\Lambda-\lambda)}(\dim V_\lambda)e^\lambda \in \cE, \]
where $V=\bigoplus_{\lambda \in \cP(V)} V_\lambda$ signifies its weight space decomposition. 
It has been essentially shown by V. G. Kac \cite{Kac1978} that, for an even level $\Lambda \in P_+$, one has
\begin{equation}\label{twisted-Weyl-Kac}
\ch^\psi L(\Lambda)=\frac{\sum_{w \in W}\vep(w)\psi(w)e^{w(\Lambda+\rho)}}{\sum_{w \in W}\vep(w)\psi(w)e^{w(\rho)}}.
\end{equation} 
\section{Theta functions}\label{sect:theta}
For $\lambda, \mu \in \fh^\ast$, set $e^{\lambda}(\mu)=e^{\widehat{I}(\lambda,\mu)}$. In this way, one can regard the formal (twisted-)characters as functions on a certain domain in $\fh^\ast$. Here, we introduce $\theta$-series and regard them as analytic functions. 
\subsection{Theta functions}
Let $k \in \Z_{> 0}$ be a positive integer. For $\lambda \in P_{k,+}$ and $\sharp \in \{\, \mathrm{I}, \mathrm{I\hspace{-1.2pt}I}\, \}$, we define the formal theta series ${}^{(\sharp)}\Theta_\lambda$ and its twisted form ${}^{(\sharp)}\Theta_\lambda^{\psi}$ as follows:
\begin{align*}
{}^{(\sharp)}\Theta_\lambda=
&e^{-\frac{\widehat{I}(\lambda,\lambda)}{2k}\delta}\sum_{\gamma \in M^{(\sharp)}} e^{t_\gamma(\lambda)} 
=
\sum_{\gamma \in M^{(\sharp)}} e^{\lambda+k\gamma-\frac{1}{2k}\widehat{I}(\lambda+k\gamma,\lambda+k\gamma)\delta}, \\
{}^{(\sharp)}\Theta_\lambda^{\psi}=
&e^{-\frac{\widehat{I}(\lambda,\lambda)}{2k}\delta}\sum_{\gamma \in M^{(\sharp)}} \psi(t_\gamma)e^{t_\gamma(\lambda)} 
=
\sum_{\gamma \in M^{(\sharp)}} \psi(t_\gamma)e^{\lambda+k\gamma-\frac{1}{2k}\widehat{I}(\lambda+k\gamma,\lambda+k\gamma)\delta},
\end{align*}
where $\psi:W \longrightarrow \{\pm 1\}$ is the nice function on $W$ defined in \eqref{def_psi-map}. They can be viewed as complex analytic functions on
the domain $Y$ defined in \S \ref{sect:coordinate}. 

By direct computations, one has, for $\sharp \in \{ \, \mathrm{I}, \mathrm{I\hspace{-1.2pt}I}\, \}$ and $\underline{y}=(\tau, \underline{z},t) \in Y$, 
\begin{align*}
&\big({}^{(\sharp)}\Theta_\lambda\circ (\varphi^{(\sharp)})^{-1}\big)(\tau,\underline{z},t)\\
=
&e^{2\widehat{I}(\lambda,\delta)\pi\sqrt{-1}t}\sum_{\gamma \in M^{(\mathrm{\sharp})}}
e^{\widehat{I}(\lambda,\delta)\pi\sqrt{-1}\tau \norm{\gamma+\frac{1}{k}\pi^{(\mathrm{\sharp})}(\lambda)}^2+2\widehat{I}(\lambda,\delta)\pi\sqrt{-1}\widehat{I}\big(\gamma+\frac{1}{k}\pi^{(\mathrm{\sharp})}(\lambda),\pr^{(\sharp)}(\underline{y})\big)}, \\
&\big({}^{(\sharp)}\Theta_\lambda^{\psi}\circ (\varphi^{(\sharp)})^{-1}\big)(\tau,\underline{z},t)\\
=
&e^{2\widehat{I}(\lambda,\delta)\pi\sqrt{-1}t}\sum_{\gamma \in M^{(\sharp)}}\psi(t_\gamma)
e^{\widehat{I}(\lambda,\delta)\pi\sqrt{-1}\tau \norm{\gamma+\frac{1}{k}\pi^{(\sharp)}(\lambda)}^2+2\widehat{I}(\lambda,\delta)\pi\sqrt{-1}\widehat{I}\big(\gamma+\frac{1}{k}\pi^{(\sharp)}(\lambda),\pr^{(\sharp)}(\underline{y})\big)},
\end{align*}
where $\pr^{(\sharp)}$ is defined in \eqref{defn:pr-Y}.
Fix $k \in \Z_{>0}$. 
Now, we introduce $W$-anti invariants, or equivalently, $\vep$-twisted $W$-invariants: for $\lambda \in P_{k,+}$, 
\[ A_{\lambda+\rho}:=e^{-\frac{\widehat{I}(\lambda+\rho,\lambda+\rho)}{2(k+2l+1)}\delta}\sum_{w \in W}\vep (w) e^{w(\lambda+\rho)}. \]
By Lemma \ref{lemma:aff-Weyl}  and the fact that $\vep(t_\gamma)=1$ for any $\gamma \in M^{(\sharp)}$, it follows that
\[ A_{\lambda+\rho}=\sum_{u \in W_f^{(\sharp)}} \vep(u) {}^{(\sharp)}\Theta_{u(\lambda+\rho)}, \]
for any $\sharp \in \{\, \mathrm{I}, \mathrm{I\hspace{-1.2pt}I}\, \}$. 
Similarly, we introduce $\vep\psi$-twisted $W$-invariants: for $\lambda \in P_{k,+} $, 
\[ A_{\lambda+\rho}^{\psi}:=e^{-\frac{\widehat{I}(\lambda+\rho,\lambda+\rho)}{2(k+2l+1)}\delta}\sum_{w \in W}\vep (w)\psi(w) e^{w(\lambda+\rho)}. \]
By Remark \ref{rem:psi-W}, we can rewrite it as follows:
\begin{align*}
 A_{\lambda+\rho}^{\psi}=
 &\sum_{u \in W_f^{(\sharp)}} \vep(u) \psi(u) {}^{(\sharp)}\Theta_{u(\lambda+\rho)}^{\psi} \\
  =
  &\begin{cases} \; \sum_{u \in W_{f;m}^{(\mathrm{I})}} \vep(u) \big({}^{(\mathrm{I})}\Theta_{u(\lambda+\rho)}^{\psi}+{}^{(\mathrm{I})}\Theta_{us_{\alpha_l^{(\mathrm{I})}}(\lambda+\rho)}^{\psi}\big)\; & \sharp=\mathrm{I}, \\
  \; \sum_{u \in W_f^{(\mathrm{I\hspace{-1.2pt}I})}} \vep(u) {}^{(\mathrm{I\hspace{-1.2pt}I})}\Theta_{u(\lambda+\rho)}^{\psi}\; & \sharp=\mathrm{I\hspace{-1.2pt}I}, 
  \end{cases}
\end{align*}
where the group $W_{f;m}^{(\mathrm{I})}$ is defined in Remark \ref{rem:psi-W}.
\begin{rem}\label{rem:twisted-denominator} For $\lambda=0$, one has the following \textbf{$($twisted-$)$denominator}$:$
\begin{align*}
A_\rho=&e^{\rho-\frac{\widehat{I}(\rho,\rho)}{2(2l+1)}\delta}\prod_{n=1}^\infty(1-e^{-n\delta})^l\prod_{\alpha \in (\Delta_f^{(\mathrm{I})})_s^+}(1-e^{-\alpha})\prod_{\alpha \in (\Delta_f^{(\mathrm{I})})_m^+}(1-e^{-\alpha}) \\
\times &\prod_{n=1}^\infty \Big(\prod_{\alpha \in (\Delta_f^{(\mathrm{I})})_s}(1-e^{-\alpha-n\delta})(1-e^{-2\alpha-(2n-1)\delta})\prod_{\alpha \in (\Delta_f^{(\mathrm{I})})_m}(1-e^{-\alpha-n\delta})\Big), \\
A_\rho^\psi=& e^{\rho-\frac{\widehat{I}(\rho,\rho)}{2(2l+1)}\delta}\prod_{n=1}^\infty(1-e^{-n\delta})^l\prod_{\alpha \in (\Delta_f^{(\mathrm{I})})_s^+}(1+e^{-\alpha})\prod_{\alpha \in (\Delta_f^{(\mathrm{I})})_m^+}(1-e^{-\alpha}) \\
\times &\prod_{n=1}^\infty \Big(\prod_{\alpha \in (\Delta_f^{(\mathrm{I})})_s}(1+e^{-\alpha-n\delta})(1-e^{-2\alpha-(2n-1)\delta})\prod_{\alpha \in (\Delta_f^{(\mathrm{I})})_m}(1-e^{-\alpha-n\delta})\Big).
\end{align*}
\end{rem}
When we regard the twisted invariants $A_{\lambda+\rho}$ and $A_{\lambda+\rho}^\psi$ as functions on $Y$, for $\underline{y}=(\tau,\underline{z},t) \in Y$, we have
\begin{align*}
\big(A_{\lambda+\rho}\circ (\varphi^{(\sharp)})^{-1})(\underline{y})=
&\sum_{u \in W_f^{(\sharp)}}\vep(u){}^{(\sharp)}\Theta_{u(\lambda+\rho)}(\underline{y}), \\
\big( A_{\lambda+\rho}^{\psi} \circ (\varphi^{(\sharp)})^{-1}\big)(\underline{y})=
 &\sum_{u \in W_f^{(\sharp)}}\vep(u)\psi(u){}^{(\sharp)}\Theta_{u(\lambda+\rho)}^{\psi}(\underline{y}).
\end{align*}
\begin{rem}\label{rem:Macdonald-id} An appropriate specialization of $A_\rho\circ (\varphi^{(\sharp)})^{-1}$ and $A_\rho^{\psi}\circ (\varphi^{(\sharp)})^{-1}$ $(\, \sharp \in \{\, \mathrm{I}, \mathrm{I\hspace{-1.2pt}I}\, \}\, )$ gives $\eta$-product identities appeared in the Appendix $1$, Type $BC_l$, $(6)$ in \cite{Macdonald1972}: \\
 \begin{center}
 \scalebox{0.9}{
    \begin{tabular}{|c||c|c|c|c|} \hline
  Twisted-invariant & $A_\rho\circ (\varphi^{(\mathrm{I})})^{-1}$ & $A_\rho\circ (\varphi^{(\mathrm{I\hspace{-1.2pt}I})})^{-1}$ & $A_\rho^{\psi}\circ (\varphi^{(\mathrm{I})})^{-1}$ &  $A_\rho^{\psi} \circ (\varphi^{(\mathrm{I\hspace{-1.2pt}I})})^{-1}$ \\ \hline
  Table in \cite{Macdonald1972} & $(a)$ & $(b)$ & $(c)$ & $(d)$  \\ \hline
    \end{tabular}}
\end{center}
\end{rem}
\subsection{Normalized (twisted-)characters} 
 For $\Lambda \in P_+$, the rational number
\[ c_\Lambda=\frac{\widehat{I}(\Lambda+\rho,\Lambda+\rho)}{2\widehat{I}(\Lambda+\rho,\delta)}-\frac{\widehat{I}(\rho,\rho)}{2\widehat{I}(\rho,\delta)}
\]
is called the \textbf{conformal anomaly}. We define the normalized (twisted-)character $\chi_\Lambda$ (resp. $\chi_\Lambda^\psi$) as follows:
\[ \chi_\Lambda:=e^{-c_{\Lambda}\delta}\ch L(\Lambda), \qquad \chi_\Lambda^\psi:=e^{-c_\Lambda \delta}\ch^\psi L(\Lambda). \]
It follows from the Weyl-Kac character formula \eqref{Weyl-Kac} and its twisted version \eqref{twisted-Weyl-Kac} that for an even level $\Lambda \in P_+$, one has
\begin{equation}\label{eqn:chi-A}
\chi_\Lambda=\frac{A_{\lambda+\rho}}{A_\rho}, \qquad \chi_\Lambda^\psi=\frac{A_{\Lambda+\rho}^\psi}{A_\rho^\psi}.
\end{equation} 

\section{Actions of $\mathrm{SL}_2(\Z)$}\label{sect:characters}
Recall the action of the group $\mathrm{SL}_2(\Z)$ on the complex domain $Y$: \; for $\begin{pmatrix} a & b \\ c & d \end{pmatrix} \in \mathrm{SL}_2(\Z)$ and $\underline{y}=(\tau,\underline{z},t) \in Y$, 
\[ \begin{pmatrix} a & b \\ c & d \end{pmatrix}.(\tau,\underline{z},t) =\Big( \frac{a\tau+b}{c\tau+d}, \frac{\underline{z}}{c\tau+d}, t-\frac{c\norm{\pr^{(\sharp)}(\underline{y})}^2}{2(c\tau+d)}\Big). \]
In particular, in this section, we study how the (twisted) $W$-invariant $\theta$-functions
behave under the action of $S:=\begin{pmatrix} 0 & -1 \\ 1 & 0 \end{pmatrix}$ and $T:=\begin{pmatrix} 1 & 1 \\ 0 & 1 \end{pmatrix}$. 
\subsection{Poisson's resummation formula}
Let $E=\R^l$ be an affine space and $I$ be a positive definite symmetric bilinear form on $E$. For a full sublattice $M$ of $E$, denote by $\mathrm{Vol}(M)$ the discriminant of $M$, i.e., the volume of $E/M$.  For a rapidly decreasing function $f: E \longrightarrow \C$ and $\underline{y} \in \R^l$, set
\[ \hat{f}(\underline{y}):=\int_{E} f(\underline{x})e^{-2\pi\sqrt{-1}I(\underline{x},\underline{y})}d\underline{x}. \]
The so-called Poisson's resummation formula is described as follows:
\begin{equation}\label{Poisson}
\sum_{ \underline{m} \in M} f(\underline{m})=\frac{1}{\mathrm{Vol}(M)^{\frac{1}{2}}}\sum_{\underline{m} \in M^\vee} \hat{f}(\underline{m}). 
\end{equation}
Here, 
\[ M^\vee:=\big\{ \, \underline{n} \in E\, \big\vert\, I(\underline{m}, \underline{n}) \in \Z, \; \forall\, \underline{m} \in M\, \big\} \]
is the dual lattice of $M$.  In particular, for a self-dual lattice $M$, $\underline{a} \in \C^l$, $\tau \in \H$ and $f(\underline{x}):=e^{\pi \sqrt{-1}
\big( -\frac{1}{\tau}\big)
\norm{\underline{x}+\underline{a}}^2}$, one obtains

\begin{equation}\label{Poisson_Heat-Kernel}
\sum_{ \underline{m} \in M} e^{\pi \sqrt{-1} 
\big( -\frac{1}{\tau}\big) 
\norm{\underline{m}+\underline{a}}^2}=
\Big( \frac{\tau}{\sqrt{-1}}\Big)^{\frac{1}{2}l}
\sum_{ \underline{m} \in M} e^{\pi \sqrt{-1} \tau \norm{\underline{m}}^2+2\pi \sqrt{-1}I(\underline{a},\underline{m})}. 
\end{equation}

See, e.g., \cite{Serre1970} for more information. 
\subsection{Modular transformations on twisted-invariants} \hskip 0.1in \\
As applications of \eqref{Poisson_Heat-Kernel}, we compute the modular transformation of the anti-invariants $A \circ (\varphi^{(\sharp)})^{-1}$ and $A^\psi \circ (\varphi^{(\sharp)})^{-1})$. Since the concrete computations are entirely similar to the cases studied by V. G. Kac and D. H. Peterson \cite{KacPeterson1984}, we omit the detail and write only the results. \\

For $\lambda, \mu \in P_{k,+}$, set
\begin{align*}
a^{(\mathrm{I})}(\lambda,\mu)=
&\sum_{u\in W_f^{(\mathrm{I})}}\vep(u)\psi(u)e^{-\frac{2\pi\sqrt{-1}}{k+2l+1}\cdot \widehat{I}\big(u\big(\pi^{(\mathrm{I})}(\lambda)+\rho_f^{(\mathrm{I})}\big),\pi^{(\mathrm{I})}(\mu)+\rho_f^{(\mathrm{I})}\big)}, \\
a^{(\mathrm{I}),(\mathrm{I\hspace{-1.2pt}I})}(\lambda,\mu)=
&\sum_{u \in W_f^{(\mathrm{I\hspace{-1.2pt}I})}}\vep(u)e^{-\frac{2\pi\sqrt{-1}}{k+2l+1}\cdot \widehat{I}\big(u\big(\pi^{(\mathrm{I\hspace{-1.2pt}I})}(\lambda)+\phi(\rho_f^{(\mathrm{I})})\big),\pi^{(\mathrm{I\hspace{-1.2pt}I})}(\mu)+\rho_f^{(\mathrm{I\hspace{-1.2pt}I})}\big)}, \\
a^{(\mathrm{I\hspace{-1.2pt}I}),(\mathrm{I})}(\lambda,\mu)=
&\sum_{u \in W_f^{(\mathrm{I})}}\vep(u)e^{-\frac{2\pi\sqrt{-1}}{k+2l+1}\cdot \widehat{I}\big(u\big(\pi^{(\mathrm{I})}(\lambda)+\phi(\rho_f^{(\mathrm{I\hspace{-1.2pt}I})})\big),\pi^{(\mathrm{I})}(\mu)+\rho_f^{(\mathrm{I})}\big)}, \\
a^{(\mathrm{I\hspace{-1.2pt}I})}(\lambda,\mu)=
&\sum_{u\in W_f^{(\mathrm{I\hspace{-1.2pt}I})}}\vep(u)e^{-\frac{2\pi\sqrt{-1}}{k+2l+1}\cdot \widehat{I}\big(u\big(\pi^{(\mathrm{I\hspace{-1.2pt}I})}(\lambda)+\rho_f^{(\mathrm{I\hspace{-1.2pt}I})}\big),\pi^{(\mathrm{I\hspace{-1.2pt}I})}(\mu)+\rho_f^{(\mathrm{I\hspace{-1.2pt}I})}\big)}. \\
\end{align*}

\begin{rem} Let $k \in 2\Z_{\geq 0}$. 
\begin{enumerate}
\item For any $\lambda, \mu \in P_{k,+}$, one has $a^{(\mathrm{I}),(\mathrm{I\hspace{-1.2pt}I})}(\phi(\lambda),\mu)=a^{(\mathrm{I\hspace{-1.2pt}I}),(\mathrm{I})}(\phi(\mu),\lambda)$. 
\item
The definition of $a^{(\mathrm{I})}(\lambda,\mu)$ can be rewritten as follows:
\begin{align*}
 a^{(\mathrm{I})}(\lambda,\mu)=
 &\sum_{u\in W_{f;m}^{(\mathrm{I})}}\vep(u)\Big(e^{-\frac{2\pi\sqrt{-1}}{k+2l+1}\cdot \widehat{I}\big(u\big(\pi^{(\mathrm{I})}(\lambda)+\rho_f^{(\mathrm{I})}\big),\pi^{(\mathrm{I})}(\mu)+\rho_f^{(\mathrm{I})}\big)} \\
&\phantom{\sum_{u\in W(D_l^{(\mathrm{I})}}\vep(u)}
+e^{-\frac{2\pi\sqrt{-1}}{k+2l+1}\cdot \widehat{I}\big(us_{\alpha_l^{(\mathrm{I})}}\big(\pi^{(\mathrm{I})}(\lambda)+\rho_f^{(\mathrm{I})}\big),\pi^{(\mathrm{I})}(\mu)+\rho_f^{(\mathrm{I})}\big)}\Big),
\end{align*}
where the group $W_{f;m}^{(\mathrm{I})}$ is defined in Remark \ref{rem:psi-W}.
\end{enumerate}
\end{rem}
The first lemma concerns the $W$-anti invariants and the coordinate system of type ($\mathrm{I}$) :
\begin{lemma}\label{lemma:Transf-A-(I)} Let $k \in 2\Z_{>0}$. For $\lambda \in P_{k,+}$ and $\underline{y}=(\tau,\underline{z},t) \in Y$, 
\begin{align*}
& \big(A_{\lambda+\rho} \circ (\varphi^{(\mathrm{I})})^{-1}\big)\Big( -\frac{1}{\tau}, \frac{\underline{z}}{\tau}, t-\frac{\norm{\pr^{(\mathrm{I})}(\underline{y})}^2}{2\tau}\Big) \\
=
&(k+2l+1)^{-\frac{1}{2}l}\Big( \frac{\tau}{\sqrt{-1}}\Big)^{\frac{1}{2}l} 
\sum_{\mu \in P_{k,+} \modo \C\delta}a^{(\mathrm{I}),(\mathrm{I\hspace{-1.2pt}I})}(\phi(\lambda),\mu)
\big(A_{\mu+\rho}^{\psi} \circ (\varphi^{(\mathrm{I\hspace{-1.2pt}I})})^{-1}\big)(\underline{y}), \\
&
\big(A_{\lambda+\rho}\circ (\varphi^{(\mathrm{I})})^{-1}\big)( \tau+1, \underline{z}, t) =
e^{\frac{\pi\sqrt{-1}}{k+2l+1}\norm{\pi^{(\mathrm{I})}(\lambda+\rho)}^2}
\big(A_{\lambda+\rho}\circ (\varphi^{(\mathrm{I})})^{-1}\big)(\underline{y}).
\end{align*}
In particular, 
\[
\big(A_{\rho}\circ (\varphi^{(\mathrm{I})})^{-1}\big)\Big( -\frac{1}{\tau}, \frac{\underline{z}}{\tau}, t-\frac{\norm{\pr^{(\mathrm{I})}(\underline{y})}^2}{2\tau}\Big)
=\Big( \frac{\tau}{\sqrt{-1}}\Big)^{\frac{1}{2}l} (\sqrt{-1})^{-l^2}\big(A_{\rho}^{\psi} \circ (\varphi^{(\mathrm{I\hspace{-1.2pt}I})})^{-1}\big)(\underline{y}).
\]
\end{lemma}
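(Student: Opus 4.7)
The plan is to decompose the argument into the action of $T$ and the action of $S$, using the anti-invariant expansion $A_{\lambda+\rho}\circ (\varphi^{(\mathrm{I})})^{-1}=\sum_{u\in W_f^{(\mathrm{I})}}\vep(u)\bigl({}^{(\mathrm{I})}\Theta_{u(\lambda+\rho)}\circ (\varphi^{(\mathrm{I})})^{-1}\bigr)$ and treating each theta series via the explicit Gaussian formula recalled in Section 3. For the $T$-action, substituting $\tau\mapsto\tau+1$ in the Gaussian exponent pulls out a factor $e^{\pi\sqrt{-1}(k+2l+1)\|\gamma+\pi^{(\mathrm{I})}(u(\lambda+\rho))/(k+2l+1)\|^2}$. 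Expanding the square and using the parity of $k$ (and the fact that $\hat I$ takes integer or half-integer values on the lattice $M^{(\mathrm{I})}=\bigoplus_i\Z\vep_i^{(\mathrm{I})}$ paired against $\pi^{(\mathrm{I})}(\lambda+\rho)$), this factor collapses modulo $e^{2\pi\sqrt{-1}\Z}$ to $e^{\pi\sqrt{-1}\|\pi^{(\mathrm{I})}(\lambda+\rho)\|^2/(k+2l+1)}$, which is both $\gamma$-independent and $W_f^{(\mathrm{I})}$-invariant (the Weyl group preserves $\|\cdot\|$), hence factors out of the sum to yield the second identity.

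For the $S$-action I would apply Poisson's resummation formula \eqref{Poisson_Heat-Kernel} to each Gaussian over $M^{(\mathrm{I})}$. Since $\vep_1^{(\mathrm{I})},\dots,\vep_l^{(\mathrm{I})}$ are orthonormal, $M^{(\mathrm{I})}$ is self-dual with respect to $\hat I$; this produces the prefactor $(\tau/\sqrt{-1})^{l/2}$ and converts the original shifted Gaussian into a Fourier-dual sum of the same type, in which the shift $\pi^{(\mathrm{I})}(u(\lambda+\rho))/(k+2l+1)$ migrates into a phase. I would then use the isometry $\phi$ of Section 1, together with the coordinate transition \eqref{eq:transition}, to reinterpret the resulting sum as theta series of type $(\mathrm{I\hspace{-1.2pt}I})$: the half-integer shifts $-z_i-\tfrac12\tau$ produced by \eqref{eq:transition} are precisely what turn the untwisted type-$(\mathrm{I})$ theta into a $\psi$-twisted type-$(\mathrm{I\hspace{-1.2pt}I})$ theta, and the Poisson phase for each lattice coset delivers exactly the matrix entry $a^{(\mathrm{I}),(\mathrm{I\hspace{-1.2pt}I})}(\phi(\lambda),\mu)$ with $\mu$ ranging over $P_{k,+}\modo\C\delta$. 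Reorganizing the Weyl sum using the semidirect product $W=W_f^{(\sharp)}\ltimes t(M^{(\sharp)})$ of Lemma \ref{lemma:aff-Weyl} and Remark \ref{rem:psi-W} then assembles the right-hand side into $\sum_\mu a^{(\mathrm{I}),(\mathrm{I\hspace{-1.2pt}I})}(\phi(\lambda),\mu)\bigl(A_{\mu+\rho}^{\psi}\circ (\varphi^{(\mathrm{I\hspace{-1.2pt}I})})^{-1}\bigr)$, which is the first identity. The specialization $\lambda=0$ follows because $k=0$ forces $\mu\equiv 0\modo\C\delta$ in $P_{0,+}$, reducing the sum to a single term whose coefficient $(2l+1)^{-l/2}a^{(\mathrm{I}),(\mathrm{I\hspace{-1.2pt}I})}(\phi(0),0)$ is evaluated by a Weyl-group Gauss sum: the anti-invariant $\sum_{u\in W_f^{(\mathrm{I\hspace{-1.2pt}I})}}\vep(u)e^{-2\pi\sqrt{-1}\hat I(u\phi(\rho_f^{(\mathrm{I})}),\rho_f^{(\mathrm{I\hspace{-1.2pt}I})})/(2l+1)}$ factors as a product of sines by the finite Weyl denominator identity, and a direct evaluation gives $(2l+1)^{l/2}(\sqrt{-1})^{-l^2}$.

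The main obstacle I anticipate is the bookkeeping that makes the cross-type structure appear: namely, why an $S$-transform of a type-$(\mathrm{I})$ untwisted anti-invariant produces a type-$(\mathrm{I\hspace{-1.2pt}I})$ $\psi$-twisted anti-invariant, rather than staying within type $(\mathrm{I})$. The answer is encoded in the half-shift in \eqref{eq:transition}, together with the identity $\phi(M^{(\mathrm{I})})=M^{(\mathrm{I\hspace{-1.2pt}I})}$ from Section 1; tracking the phases coming from the Poisson summation accurately through $\phi$ and the shift is what needs careful attention. Once this is properly aligned, the computation is a routine but delicate variant of the untwisted modular transformation computed by Kac and Peterson in \cite{KacPeterson1984}, which is why the authors indicate they omit the technical details.
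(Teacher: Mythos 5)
Your outline is correct and follows essentially the same route the paper takes (and merely cites): expand the anti-invariant into theta series, apply Poisson resummation on the self-dual lattice $M^{(\mathrm{I})}$ to get the $(\tau/\sqrt{-1})^{l/2}$ factor, reorganize via $\phi$ and the transition map \eqref{eq:transition} (whose half-shifts account for the $\psi$-twist on the type-$(\mathrm{I\hspace{-1.2pt}I})$ side), and evaluate the $\lambda=0$ Gauss sum by the Weyl denominator identity and the sine-product formula $\prod_{k=1}^{N-1}\sin(k\pi/N)=N/2^{N-1}$, which is exactly the key formula the authors single out. The paper omits the computation as "entirely similar to Kac--Peterson," and your sketch supplies the same skeleton, including the correct use of the parity of $k$ in the $T$-transformation.
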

\vskip 0.2in
The second lemma concerns the $\vep\psi$-twisted $W$-invariants and the coordinate of type ($\mathrm{I}$) :
\begin{lemma}\label{lemma:Transf-A-psi-(I)} Let $k \in 2\Z_{>0}$. For $\lambda \in P_{k,+}$ and $\underline{y}=(\tau,\underline{z},t) \in Y$, 
\begin{align*}
& \big(A_{\lambda+\rho}^{\psi}\circ (\varphi^{(\mathrm{I})})^{-1}\big)\Big( -\frac{1}{\tau}, \frac{\underline{z}}{\tau}, t-\frac{\norm{\pr^{(\mathrm{I})}(\underline{y})}^2}{2\tau}\Big) \\
=
&(k+2l+1)^{-\frac{1}{2}l}\Big( \frac{\tau}{\sqrt{-1}}\Big)^{\frac{1}{2}l} \sum_{\mu \in P_{k,+} \modo \C\delta}a^{(\mathrm{I})}(\lambda,\mu)
\big(A_{\mu+\rho}^{\psi} \circ (\varphi^{(\mathrm{I})})^{-1}\big)(\underline{y}), \\
&
\big(A_{\lambda+\rho}^{\psi}\circ (\varphi^{(\mathrm{I})})^{-1}\big)( \tau+1, \underline{z}, t) =
e^{\frac{\pi\sqrt{-1}}{k+2l+1}\norm{\pi^{(\mathrm{I})}(\lambda+\rho)}^2}
\big(A_{\lambda+\rho}^{\psi}\circ (\varphi^{(\mathrm{I})})^{-1}\big)(\underline{y}).
\end{align*}
In particular, 
\[
\big(A_{\rho}^{\psi} \circ (\varphi^{(\mathrm{I})})^{-1}\big)\Big( -\frac{1}{\tau}, \frac{\underline{z}}{\tau}, t-\frac{\norm{\pr^{(\mathrm{I})}(\underline{y})}^2}{2\tau}\Big)
=\Big( \frac{\tau}{\sqrt{-1}}\Big)^{\frac{1}{2}l} (-1)^{\frac{1}{2}l(l-1)}\big(A_{\rho}^{\psi}\circ (\varphi^{(\mathrm{I})})^{-1}\big)(\underline{y}).
\]
\end{lemma}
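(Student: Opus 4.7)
Plan. I would expand $A^{\psi}_{\lambda+\rho}=\sum_{u\in W_f^{(\mathrm{I})}}\vep(u)\psi(u)\,{}^{(\mathrm{I})}\Theta^{\psi}_{u(\lambda+\rho)}$ using Section~3, so that it suffices to determine how each ${}^{(\mathrm{I})}\Theta^{\psi}_\mu\circ (\varphi^{(\mathrm{I})})^{-1}$ transforms under $T$ and $S$. The essential first step is to pin down $\psi$ on the translation subgroup $t(M^{(\mathrm{I})})$. Using $t_{\vep_1^{(\mathrm{I})}}=s_{\alpha_0^{(\mathrm{I})}}s_{2\vep_1^{(\mathrm{I})}}$ from Lemma \ref{lemma:translation}, the fact that $\alpha_0^{(\mathrm{I})}$ is a long real root (so $\psi(s_{\alpha_0^{(\mathrm{I})}})=1$) and that $s_{2\vep_1^{(\mathrm{I})}}=s_{\vep_1^{(\mathrm{I})}}$ is the reflection along the short root $\vep_1^{(\mathrm{I})}$, lying in the $W$-orbit of $\alpha_l^{(\mathrm{I})}$ (so $\psi(s_{\vep_1^{(\mathrm{I})}})=-1$), I get $\psi(t_{\vep_i^{(\mathrm{I})}})=-1$ for every $i$. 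Hence for $\gamma=\sum n_i\vep_i^{(\mathrm{I})}\in M^{(\mathrm{I})}$ one has $\psi(t_\gamma)=(-1)^{n_1+\cdots+n_l}=e^{2\pi\sqrt{-1}\widehat{I}(\gamma,\eta_0)}$ with $\eta_0:=\tfrac{1}{2}\sum_i \vep_i^{(\mathrm{I})}$. This $\mathrm{U}(1)$-character rewriting is the hinge on which both parts of the proof pivot.

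For the $T$-transformation, $\tau\mapsto\tau+1$ multiplies the $\gamma$-th summand of ${}^{(\mathrm{I})}\Theta^{\psi}_{u(\lambda+\rho)}\circ (\varphi^{(\mathrm{I})})^{-1}(\tau,\underline z,t)$ by $e^{\pi\sqrt{-1}k'\|\gamma+\pi^{(\mathrm{I})}(u(\lambda+\rho))/k'\|^2}$, where $k'=k+2l+1$ is \emph{odd} because $k$ is even. Expanding this quadratic form into its three summands and reducing modulo $2$, I would check that the $k'\|\gamma\|^2$-piece contributes $(-1)^{\sum n_i^2}=\psi(t_\gamma)$ (using $n^2\equiv n\bmod 2$), and that the cross-term piece also contributes $\psi(t_\gamma)$. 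The latter uses that $\pi^{(\mathrm{I})}(\lambda)\in M^{(\mathrm{I})}$ when $k$ is even (the coefficient of $\Lambda_l^{(\mathrm{I})}$ in an even-level $\lambda$ is forced to be even), whereas $\rho_f^{(\mathrm{I})}$ has only half-integer coordinates in the basis $\vep_i^{(\mathrm{I})}$, and that $W_f^{(\mathrm{I})}$ acts by signed permutations, which preserves $\sum n_i\bmod 2$. The two $\psi(t_\gamma)$-factors, combined with the original $\psi(t_\gamma)$-twist, give $\psi(t_\gamma)^3=\psi(t_\gamma)$, so ${}^{(\mathrm{I})}\Theta^{\psi}$ is recovered up to the global phase $e^{\pi\sqrt{-1}\|\pi^{(\mathrm{I})}(\lambda+\rho)\|^2/k'}$. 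Summing over $u$ with signs $\vep(u)\psi(u)$ yields the stated $T$-formula.

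For the $S$-transformation I would apply Poisson's resummation formula \eqref{Poisson_Heat-Kernel} to the Gaussian sum defining ${}^{(\mathrm{I})}\Theta^{\psi}_{u(\lambda+\rho)}\circ(\varphi^{(\mathrm{I})})^{-1}$ at the point $(-1/\tau,\underline z/\tau,t-\|\pr^{(\mathrm{I})}(\underline y)\|^2/(2\tau))$. The character reformulation of $\psi(t_\gamma)$ above turns the sum into a standard Gaussian to which \eqref{Poisson_Heat-Kernel} applies verbatim; self-duality of $M^{(\mathrm{I})}$ with respect to $\widehat{I}$ makes the dual lattice again $M^{(\mathrm{I})}$, and the $\eta_0$-shift produced by Fourier duality reintroduces a $\psi$-twist on the output side, so that the transform lives in type-$(\mathrm{I})$ twisted theta functions (not type-$(\mathrm{I\hspace{-1.2pt}I})$ ones as in Lemma \ref{lemma:Transf-A-(I)}). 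Reorganising the resulting $W_f^{(\mathrm{I})}$-orbit sum into an outer sum over $\mu\in P_{k,+}\modo\C\delta$ and an inner sum over $W_f^{(\mathrm{I})}$, exactly as in \cite{KacPeterson1984}, produces $\sum_\mu a^{(\mathrm{I})}(\lambda,\mu)\, A^{\psi}_{\mu+\rho}\circ(\varphi^{(\mathrm{I})})^{-1}(\underline y)$; the constant $(k+2l+1)^{-l/2}(\tau/\sqrt{-1})^{l/2}$ is the overall prefactor of Poisson for the rank-$l$ lattice $M^{(\mathrm{I})}$. The \emph{In particular} case is the specialisation $\lambda=0$: only the $\mu=0$ term survives, and the finite Gauss sum $a^{(\mathrm{I})}(0,0)$ is evaluated by the Weyl denominator identity for the $\psi$-twisted action of $W_f^{(\mathrm{I})}$ (equivalently an anti-symmetrisation over $W_{f;m}^{(\mathrm{I})}$) at $\rho_f^{(\mathrm{I})}/(2l+1)$, yielding $(2l+1)^{l/2}(-1)^{l(l-1)/2}$.

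The main obstacle is the bookkeeping in the $S$-step: one must verify that the $\eta_0$-shift produced by Poisson is absorbed cleanly into a second $\psi$-twist on the output side, so that the answer closes within the type-$(\mathrm{I})$ coordinate system. This is precisely what the coincidence $\psi(t_\gamma)=e^{2\pi\sqrt{-1}\widehat{I}(\gamma,\eta_0)}$, combined with the self-duality of $M^{(\mathrm{I})}$, achieves; without it one would be pushed into the type-$(\mathrm{I\hspace{-1.2pt}I})$ coordinate system as in Lemma \ref{lemma:Transf-A-(I)}, and the full $\mathrm{SL}_2(\Z)$-action on the span of twisted characters at even level would not close.
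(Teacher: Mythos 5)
Your proposal is correct and follows exactly the route the paper intends: Poisson resummation \eqref{Poisson_Heat-Kernel} applied to the twisted theta sums, with the computation organised as in Kac--Peterson (the paper explicitly omits these details). Your added bookkeeping --- $\psi(t_\gamma)=(-1)^{n_1+\cdots+n_l}=e^{2\pi\sqrt{-1}\widehat{I}(\gamma,\eta_0)}$, the parity argument using that $k+2l+1$ is odd and that $\rho_f^{(\mathrm{I})}$ has half-odd-integer coordinates while $\pi^{(\mathrm{I})}(\lambda)\in M^{(\mathrm{I})}$ at even level, and the check $a^{(\mathrm{I})}(0,0)=(2l+1)^{l/2}(-1)^{l(l-1)/2}$ --- is accurate and supplies precisely what the paper leaves to the reader.
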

\vskip 0.2in

The third lemma concerns the $W$-anti invariants and the coordinate of type ($\mathrm{I\hspace{-1.2pt}I}$) :
\begin{lemma}\label{lemma:Transf-A-(II)} Let $k \in 2\Z_{>0}$. For $\lambda \in P_{k,+}$ and $\underline{y}=(\tau,\underline{z},t) \in Y$, 
\begin{align*}
& \big(A_{\lambda+\rho}\circ (\varphi^{(\mathrm{I\hspace{-1.2pt}I})})^{-1}\big)\Big( -\frac{1}{\tau}, \frac{\underline{z}}{\tau}, t-\frac{\norm{\pr^{(\mathrm{I\hspace{-1.2pt}I})}(\underline{y})}^2}{2\tau}\Big) \\
=
&(k+2l+1)^{-\frac{1}{2}l}\Big( \frac{\tau}{\sqrt{-1}}\Big)^{\frac{1}{2}l} \sum_{\mu \in P_{k,+} \modo \C\delta}a^{(\mathrm{I\hspace{-1.2pt}I})}(\lambda,\mu)
\big(A_{\mu+\rho}\circ (\varphi^{(\mathrm{I\hspace{-1.2pt}I})})^{-1}\big)(\underline{y}), \\
&
\big(A_{\lambda+\rho}\circ (\varphi^{(\mathrm{I\hspace{-1.2pt}I})})^{-1}\big)( \tau+1, \underline{z}, t) =
e^{\frac{\pi\sqrt{-1}}{k+2l+1}\norm{\pi^{(\mathrm{I\hspace{-1.2pt}I})}(\lambda+\rho)}^2}
\big(A_{\lambda+\rho}^{\psi} \circ (\varphi^{(\mathrm{I\hspace{-1.2pt}I})})^{-1}\big)( \underline{y}).
\end{align*}
In particular, 
\[
\big(A_{\rho}\circ (\varphi^{(\mathrm{I\hspace{-1.2pt}I})})^{-1}\big)\Big( -\frac{1}{\tau}, \frac{\underline{z}}{\tau}, t-\frac{\norm{\pr^{\mathrm{I\hspace{-1.2pt}I})}(\underline{y})}^2}{2\tau}\Big)
=\Big( \frac{\tau}{\sqrt{-1}}\Big)^{\frac{1}{2}l} (\sqrt{-1})^{-l^2}\big(A_{\rho}\circ (\varphi^{(\mathrm{I\hspace{-1.2pt}I})})^{-1}\big)(\underline{y}).
\]
\end{lemma}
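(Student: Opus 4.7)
The plan is to follow the Poisson-summation strategy that Kac and Peterson carried out in \cite{KacPeterson1984}, now executed entirely inside the type ($\mathrm{I\hspace{-1.2pt}I}$) coordinate system. First, I would expand
\[
A_{\lambda+\rho}\circ (\varphi^{(\mathrm{I\hspace{-1.2pt}I})})^{-1}
=\sum_{u\in W_f^{(\mathrm{I\hspace{-1.2pt}I})}}\vep(u)\,{}^{(\mathrm{I\hspace{-1.2pt}I})}\Theta_{u(\lambda+\rho)}\circ (\varphi^{(\mathrm{I\hspace{-1.2pt}I})})^{-1},
\]
insert the explicit theta-function formula recorded just before Remark~\ref{rem:Macdonald-id}, and substitute $\underline{y}\mapsto S.\underline{y}=(-1/\tau,\underline{z}/\tau,t-\|\pr^{(\mathrm{I\hspace{-1.2pt}I})}(\underline{y})\|^2/(2\tau))$. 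The $t$-shift in the $S$-action is precisely what is needed to cancel the cross-terms coming from $\|\gamma+\frac{1}{k+2l+1}\pi^{(\mathrm{I\hspace{-1.2pt}I})}(u(\lambda+\rho))\|^2$, leaving a Gaussian of the form $e^{\pi\sqrt{-1}(-1/\tau)\|\gamma+\underline{a}\|^2}$ summed over $\gamma\in M^{(\mathrm{I\hspace{-1.2pt}I})}$.

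At this point I would invoke \eqref{Poisson_Heat-Kernel}, using that $M^{(\mathrm{I\hspace{-1.2pt}I})}=\bigoplus_{i=1}^l\Z\vep_i^{(\mathrm{I\hspace{-1.2pt}I})}$ is self-dual with respect to $\widehat{I}$ (since the $\vep_i^{(\mathrm{I\hspace{-1.2pt}I})}$ are orthonormal). This produces the factor $(\tau/\sqrt{-1})^{l/2}$ and a new sum of Gaussians indexed by $M^{(\mathrm{I\hspace{-1.2pt}I})}$, now with a linear term in $\underline{m}$ that contains $\pi^{(\mathrm{I\hspace{-1.2pt}I})}(u(\lambda+\rho))$. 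I would then reindex the sum by writing each $\underline{m}$ as $(k+2l+1)\gamma'+\beta$ with $\gamma'\in M^{(\mathrm{I\hspace{-1.2pt}I})}$ and $\beta$ in a set of representatives of $M^{(\mathrm{I\hspace{-1.2pt}I})}/(k+2l+1)M^{(\mathrm{I\hspace{-1.2pt}I})}$, use the fundamental-alcove description of $P_{k,+}\bmod\C\delta$, and recombine the $\gamma'$-sum back into theta functions. The $W_f^{(\mathrm{I\hspace{-1.2pt}I})}$-sum on $u$ combined with the sum over coset representatives regroups into $\sum_{\mu\in P_{k,+}\,\mathrm{mod}\,\C\delta}a^{(\mathrm{I\hspace{-1.2pt}I})}(\lambda,\mu)\,A_{\mu+\rho}\circ (\varphi^{(\mathrm{I\hspace{-1.2pt}I})})^{-1}$, which matches the claimed formula.

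The $T$-transformation is a direct computation: substituting $\tau\mapsto\tau+1$ in the theta function gives a term-by-term phase $e^{\pi\sqrt{-1}(k+2l+1)\|\gamma+\frac{1}{k+2l+1}\pi^{(\mathrm{I\hspace{-1.2pt}I})}(u(\lambda+\rho))\|^2}$; expanding the square and using that $\widehat{I}(\gamma,\gamma)$ and $2\widehat{I}(\gamma,\pi^{(\mathrm{I\hspace{-1.2pt}I})}(u(\lambda+\rho)))$ are integers when $k$ is even, together with $W_f^{(\mathrm{I\hspace{-1.2pt}I})}$-invariance of $\|\pi^{(\mathrm{I\hspace{-1.2pt}I})}(\lambda+\rho)\|^2$, yields the uniform phase $e^{\pi\sqrt{-1}\|\pi^{(\mathrm{I\hspace{-1.2pt}I})}(\lambda+\rho)\|^2/(k+2l+1)}$.

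For the specialization $\lambda=0$, one has $k=0$, so $P_{0,+}\bmod\C\delta=\{0\}$ and only the $\mu=0$ term survives; what remains is the identity $(2l+1)^{-l/2}\,a^{(\mathrm{I\hspace{-1.2pt}I})}(0,0)=(\sqrt{-1})^{-l^2}$. This I would prove by recognizing $a^{(\mathrm{I\hspace{-1.2pt}I})}(0,0)$ as a finite Weyl denominator of type $C_l$ evaluated at $\rho_f^{(\mathrm{I\hspace{-1.2pt}I})}/(2l+1)$, converting it via the finite Weyl denominator identity into a product $\prod_{\alpha\in\Delta_f^{(\mathrm{I\hspace{-1.2pt}I}),+}}2\sin(\pi\widehat{I}(\alpha,\rho_f^{(\mathrm{I\hspace{-1.2pt}I})})/(2l+1))$ times a phase, and simplifying using the standard $C_l$ strange-formula computation. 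The main technical obstacle is keeping the Poisson reindexing, the passage from $M^{(\mathrm{I\hspace{-1.2pt}I})}/(k+2l+1)M^{(\mathrm{I\hspace{-1.2pt}I})}$ to a fundamental alcove, and the regrouping into $a^{(\mathrm{I\hspace{-1.2pt}I})}(\lambda,\mu)$ bookkeeping compatible; this is where the $C_l$-symmetry of the coordinate ($\mathrm{I\hspace{-1.2pt}I}$) makes the argument close without having to invoke the isometry $\phi$ (in contrast with Lemmas~\ref{lemma:Transf-A-(I)} and~\ref{lemma:Transf-A-psi-(I)}).
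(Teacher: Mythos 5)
Your overall strategy (expand $A_{\lambda+\rho}$ over $W_f^{(\mathrm{I\hspace{-1.2pt}I})}$ into theta functions, apply Poisson resummation \eqref{Poisson_Heat-Kernel} on the self-dual lattice $M^{(\mathrm{I\hspace{-1.2pt}I})}$, reindex modulo $(k+2l+1)M^{(\mathrm{I\hspace{-1.2pt}I})}$, and evaluate the $\lambda=0$ constant via the Weyl denominator and $\prod_{k=1}^{N-1}\sin(k\pi/N)=N/2^{N-1}$) is exactly the Kac--Peterson computation that the paper invokes, and your observation that the type $C_l$ coordinate closes on itself under $S$ without the isometry $\phi$ is the right reason the first identity stays inside $\{A_{\mu+\rho}\circ(\varphi^{(\mathrm{I\hspace{-1.2pt}I})})^{-1}\}$.

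However, your treatment of the $T$-transformation has a genuine error, and it is located at the one point where this lemma differs from the classical statement. You claim that because $\widehat{I}(\gamma,\gamma)$ and $2\widehat{I}(\gamma,\pi^{(\mathrm{I\hspace{-1.2pt}I})}(u(\lambda+\rho)))$ are integers, the term-by-term phase is uniform. Being an integer is not enough: $e^{\pi\sqrt{-1}n}=(-1)^n$. Writing $\gamma=\sum_i n_i\vep_i^{(\mathrm{I\hspace{-1.2pt}I})}$ and using that $k+2l+1$ is odd, one gets
\[
e^{\pi\sqrt{-1}(k+2l+1)\norm{\gamma}^2}=(-1)^{\sum_i n_i^2}=(-1)^{\sum_i n_i}=\psi(t_\gamma),
\]
while the cross term is genuinely even (the $C_l$ weights $\pi^{(\mathrm{I\hspace{-1.2pt}I})}(\mu+\rho)$ have integer coordinates in the orthonormal basis $\vep_i^{(\mathrm{I\hspace{-1.2pt}I})}$). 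So the residual sign is \emph{not} uniform on $M^{(\mathrm{I\hspace{-1.2pt}I})}$; it is precisely $\psi(t_\gamma)$, which converts ${}^{(\mathrm{I\hspace{-1.2pt}I})}\Theta_{u(\lambda+\rho)}$ into ${}^{(\mathrm{I\hspace{-1.2pt}I})}\Theta^{\psi}_{u(\lambda+\rho)}$ and hence (since $W_f^{(\mathrm{I\hspace{-1.2pt}I})}\subset\Ker\psi$ by Remark \ref{rem:psi-W}) produces $A^{\psi}_{\lambda+\rho}$ on the right-hand side, as the lemma asserts. Your version would instead return $A_{\lambda+\rho}$ itself up to a scalar, which contradicts the statement and would destroy the $T$-action pattern $V_k^{(\mathrm{I\hspace{-1.2pt}I})}\to V_k^{\psi,(\mathrm{I\hspace{-1.2pt}I})}$ on which the main theorem of Section \ref{sect:observations} rests. (This is also why the analogous sign analysis in coordinate $(\mathrm{I})$ comes out differently: there the half-integer coordinates of $\rho_f^{(\mathrm{I})}$ make the cross term contribute a compensating sign.)
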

\vskip 0.2in
The fourth lemma concerns the $\vep\psi$-twisted $W$-invariants and the coordinate of type ($\mathrm{I\hspace{-1.2pt}I}$) :
\begin{lemma}\label{lemma:Transf-A-psi-(II)} Let $k \in 2\Z_{>0}$. For $\lambda \in P_{k,+}$ and $\underline{y}=(\tau,\underline{z},t) \in Y$, 
\begin{align*}
& \big(A_{\lambda+\rho}^{\psi}\circ (\varphi^{(\mathrm{I\hspace{-1.2pt}I})})^{-1}\big)\Big( -\frac{1}{\tau}, \frac{\underline{z}}{\tau}, t-\frac{\norm{\pr^{(\mathrm{I\hspace{-1.2pt}I})}(\underline{y})}^2}{2\tau}\Big) \\
=
&(k+2l+1)^{-\frac{1}{2}l}\Big( \frac{\tau}{\sqrt{-1}}\Big)^{\frac{1}{2}l} \sum_{\mu \in P_{k,+} \modo \C\delta}a^{(\mathrm{I\hspace{-1.2pt}I}),(\mathrm{I})}(\phi(\lambda),\mu)
\big( A_{\mu+\rho}\circ (\varphi^{(\mathrm{I})})^{-1}\big)(\underline{y}), \\
&\big( A_{\lambda+\rho}^{\psi}\circ (\varphi^{(\mathrm{I\hspace{-1.2pt}I})})^{-1}\big)( \tau+1, \underline{z}, t) =
e^{\frac{\pi\sqrt{-1}}{k+2l+1}\norm{\pi^{(\mathrm{I\hspace{-1.2pt}I})}(\lambda+\rho)}^2}
\big(A_{\lambda+\rho}\circ (\varphi^{(\mathrm{I\hspace{-1.2pt}I})})^{-1}\big)(\underline{y}).
\end{align*}
In particular, 
\[
\big(A_{\rho}^{\psi}\circ (\varphi^{(\mathrm{I\hspace{-1.2pt}I})})^{-1}\big)\Big( -\frac{1}{\tau}, \frac{\underline{z}}{\tau}, t-\frac{\norm{\pr^{(\mathrm{I\hspace{-1.2pt}I})}(\underline{y})}^2}{2\tau}\Big)
=\Big( \frac{\tau}{\sqrt{-1}}\Big)^{\frac{1}{2}l} (\sqrt{-1})^{-l^2}\big(A_{\rho}\circ (\varphi^{(\mathrm{I})})^{-1}\big)(\underline{y}).
\]
\end{lemma}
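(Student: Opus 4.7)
The proof runs in parallel to Lemmas \ref{lemma:Transf-A-(I)}--\ref{lemma:Transf-A-(II)}, following the general Kac--Peterson recipe \cite{KacPeterson1984}. I split it into the $S$-transformation, the $T$-transformation, and the specialization at $\lambda = 0$.

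For the $S$-transformation, I would start from the expansion
$A_{\lambda+\rho}^\psi = \sum_{u \in W_f^{(\mathrm{I\hspace{-1.2pt}I})}} \vep(u)\, {}^{(\mathrm{I\hspace{-1.2pt}I})}\Theta_{u(\lambda+\rho)}^\psi$
(valid by Remark \ref{rem:psi-W}, since $W_f^{(\mathrm{I\hspace{-1.2pt}I})} \subset \Ker\psi$), substitute the explicit Gaussian sum for ${}^{(\mathrm{I\hspace{-1.2pt}I})}\Theta_\mu^\psi \circ (\varphi^{(\mathrm{I\hspace{-1.2pt}I})})^{-1}$ from \S \ref{sect:theta}, and use that $\pr^{(\mathrm{I\hspace{-1.2pt}I})}(\tau,\underline{z},t)$ is independent of $\tau$ and $t$, so that $\pr^{(\mathrm{I\hspace{-1.2pt}I})}(-1/\tau,\underline{z}/\tau,\cdot) = \pr^{(\mathrm{I\hspace{-1.2pt}I})}(\underline{y})/\tau$. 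The substitution turns the $\tau$-Gaussian into a $-1/\tau$-Gaussian on the self-dual lattice $M^{(\mathrm{I\hspace{-1.2pt}I})} = \bigoplus_i \Z\vep_i^{(\mathrm{I\hspace{-1.2pt}I})}$, to which Poisson's heat-kernel identity \eqref{Poisson_Heat-Kernel} applies; this yields the prefactor $(k+2l+1)^{-l/2}(\tau/\sqrt{-1})^{l/2}$.

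The critical technical point is the character $\psi(t_\gamma)$ appearing inside the sum. Writing $\gamma = \sum_i n_i \vep_i^{(\mathrm{I\hspace{-1.2pt}I})}$, one checks by computing $\psi$ on generators that $\psi(t_\gamma) = (-1)^{n_1+\cdots+n_l} = e^{2\pi\sqrt{-1}\widehat{I}(\gamma,\eta^{(\mathrm{I\hspace{-1.2pt}I})})}$ with $\eta^{(\mathrm{I\hspace{-1.2pt}I})} := \tfrac{1}{2}\sum_i \vep_i^{(\mathrm{I\hspace{-1.2pt}I})}$. Absorbing this into the Gaussian as a shift of the summation variable translates the Poisson-dual sum by $\eta^{(\mathrm{I\hspace{-1.2pt}I})}$; under the isometry $\phi^{-1}:F_f^{(\mathrm{I\hspace{-1.2pt}I})}\to F_f^{(\mathrm{I})}$, this is precisely the source of the term $\phi(\rho_f^{(\mathrm{I\hspace{-1.2pt}I})})$ in the definition of $a^{(\mathrm{I\hspace{-1.2pt}I}),(\mathrm{I})}$. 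After transporting the resulting sum to the type $\mathrm{I}$ coordinate system via $\phi$ (using $\phi\circ\pi^{(\mathrm{I\hspace{-1.2pt}I})}=\pi^{(\mathrm{I})}\circ\phi$) and regrouping the $(M^{(\mathrm{I\hspace{-1.2pt}I})})^\vee$-sum into $W_f^{(\mathrm{I})}$-orbits indexed by $\mu \in P_{k,+} \modo \C\delta$, one obtains the formula
$\sum_\mu a^{(\mathrm{I\hspace{-1.2pt}I}),(\mathrm{I})}(\phi(\lambda),\mu)\,(A_{\mu+\rho}\circ (\varphi^{(\mathrm{I})})^{-1})(\underline{y})$.

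For the $T$-transformation, term-by-term substitution shows that the only $\tau$-dependent factor of $({}^{(\mathrm{I\hspace{-1.2pt}I})}\Theta_{u(\lambda+\rho)}^\psi \circ (\varphi^{(\mathrm{I\hspace{-1.2pt}I})})^{-1})$ is the Gaussian $e^{\pi\sqrt{-1}(k+2l+1)\tau\,\norm{\gamma + \pi^{(\mathrm{I\hspace{-1.2pt}I})}(u(\lambda+\rho))/(k+2l+1)}^2}$. Shifting $\tau\to\tau+1$ multiplies the summand by three exponentials. The mixed term $e^{2\pi\sqrt{-1}\widehat{I}(\gamma,\pi^{(\mathrm{I\hspace{-1.2pt}I})}(u(\lambda+\rho)))}$ is $1$ because $\pi^{(\mathrm{I\hspace{-1.2pt}I})}(\lambda+\rho) \in \bigoplus_i \Z \vep_i^{(\mathrm{I\hspace{-1.2pt}I})}$ and $W_f^{(\mathrm{I\hspace{-1.2pt}I})}$ preserves this lattice. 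The $\gamma$-independent term gives the claimed phase $e^{\pi\sqrt{-1}\norm{\pi^{(\mathrm{I\hspace{-1.2pt}I})}(\lambda+\rho)}^2/(k+2l+1)}$. The beautiful cancellation is in the first factor: because $k$ is even, $k+2l+1$ is odd, so $e^{\pi\sqrt{-1}(k+2l+1)\norm{\gamma}^2} = (-1)^{\sum n_i^2} = (-1)^{\sum n_i} = \psi(t_\gamma)$, which cancels the $\psi(t_\gamma)$ inside ${}^{(\mathrm{I\hspace{-1.2pt}I})}\Theta^\psi$ and converts it into the untwisted ${}^{(\mathrm{I\hspace{-1.2pt}I})}\Theta$, producing $A_{\lambda+\rho}\circ(\varphi^{(\mathrm{I\hspace{-1.2pt}I})})^{-1}$ on the right-hand side. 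The specialization at $\lambda = 0$ is extracted from the $S$-transformation formula either by direct evaluation of $a^{(\mathrm{I\hspace{-1.2pt}I}),(\mathrm{I})}(0,\mu)$ and collapse of the $\mu$-sum, or more efficiently from the denominator identities of Remark \ref{rem:twisted-denominator}. The main obstacle is bookkeeping in the $S$-transformation: precisely identifying the dual-lattice shift induced by $\psi$ with $\phi(\rho_f^{(\mathrm{I\hspace{-1.2pt}I})})$ and matching the $W_f^{(\mathrm{I\hspace{-1.2pt}I})}$-alternating sum to a $W_f^{(\mathrm{I})}$-alternating sum after transport by $\phi$.
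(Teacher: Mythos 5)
The paper deliberately omits the proof of this lemma, stating only that the computation is the Kac--Peterson one, and your reconstruction follows exactly that intended route: the expansion $A^{\psi}_{\lambda+\rho}=\sum_{u\in W_f^{(\mathrm{I\hspace{-1.2pt}I})}}\vep(u)\,{}^{(\mathrm{I\hspace{-1.2pt}I})}\Theta^{\psi}_{u(\lambda+\rho)}$, Poisson resummation on the self-dual lattice $M^{(\mathrm{I\hspace{-1.2pt}I})}$ with the character $\psi(t_\gamma)=(-1)^{n_1+\cdots+n_l}$ absorbed as a half-lattice shift (which, transported by $\phi$, accounts for $\phi(\rho_f^{(\mathrm{I\hspace{-1.2pt}I})})$ in $a^{(\mathrm{I\hspace{-1.2pt}I}),(\mathrm{I})}$), and for the $T$-step the parity identity $e^{\pi\sqrt{-1}(k+2l+1)\norm{\gamma}^2}=(-1)^{\sum n_i^2}=\psi(t_\gamma)$ for odd $k+2l+1$, which correctly converts the twisted theta into the untwisted one. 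The key technical claims you single out check out against the paper's definitions, so this is essentially the same (omitted) argument.
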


We remark the one of the key formulas in the proof of the above $4$ lemmas for $\lambda=0$ are the next well-known formula: for $N \in \Z_{>1}$, 
\[ \prod_{k=1}^{N-1} \sin \Big(\frac{k\pi}{N}\Big)=\frac{N}{2^{N-1}}. \]

\subsection{Modular transformations on (twisted-)characters} \hskip 0.1in \\
Let $k \in 2\Z_{\geq 0}$ and $\lambda \in P_{k,+}$.  Thanks to \eqref{eqn:chi-A}, the results stated in the previous section imply the following results: \\

By Lemma \ref{lemma:Transf-A-(I)}, we have
\begin{prop}\label{prop:Transf-chi-(I)} Let $k \in 2\Z_{>0}$. For $\lambda \in P_{k,+}$ and $\underline{y}=(\tau,\underline{z},t) \in Y$, 
\begin{align*}
&\big( \chi_\lambda \circ (\varphi^{(\mathrm{I})})^{-1}\big)\Big(-\frac{1}{\tau}, \frac{\underline{z}}{\tau}, t-\frac{\norm{\pr^{(\mathrm{I})}(\underline{y})}^2}{2\tau}\Big)\\
=
&(k+2l+1)^{-\frac{1}{2}l} (\sqrt{-1})^{l^2}\sum_{\mu \in P_{k,+} \modo \C \delta} a^{(\mathrm{I}),(\mathrm{I\hspace{-1.2pt}I})}(\phi(\lambda),\mu)\big(\chi_{\mu}^{\psi}\circ (\varphi^{(\mathrm{I\hspace{-1.2pt}I})})^{-1}\big)(\underline{y}), \\
&\big(\chi_\lambda\circ (\varphi^{(\mathrm{I})})^{-1}\big)(\tau+1, \underline{z}, t) \\
=
&e^{\pi\sqrt{-1}\Big(\frac{\norm{\pi^{(\mathrm{I})}(\lambda+\rho)}^2}{k+2l+1}-\frac{\norm{\pi^{(\mathrm{I})}(\rho)}^2}{2l+1}\Big)}
\big(\chi_\lambda\circ (\varphi^{(\mathrm{I})})^{-1}\big)(\underline{y}).
\end{align*}
\end{prop}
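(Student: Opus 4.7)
The plan is to derive Proposition \ref{prop:Transf-chi-(I)} as an immediate corollary of Lemma \ref{lemma:Transf-A-(I)} by exploiting the quotient presentation
\[
\chi_\lambda \circ (\varphi^{(\mathrm{I})})^{-1} = \frac{A_{\lambda+\rho}\circ (\varphi^{(\mathrm{I})})^{-1}}{A_\rho \circ (\varphi^{(\mathrm{I})})^{-1}}
\]
supplied by \eqref{eqn:chi-A}. Since the $\mathrm{SL}_2(\Z)$-action is of the same shape on both numerator and denominator (they are evaluated at the same transformed point, because the shift in the $t$-coordinate is governed by $\norm{\pr^{(\mathrm{I})}(\underline{y})}^2$, which does not depend on $\lambda$), we may simply take the ratio of the transformation formulas.

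For the $S$-action, I would apply the first assertion of Lemma \ref{lemma:Transf-A-(I)} to the numerator with general $\lambda\in P_{k,+}$, obtaining an expansion against $A_{\mu+\rho}^\psi \circ (\varphi^{(\mathrm{I\hspace{-1.2pt}I})})^{-1}$ with coefficients $(k+2l+1)^{-l/2}(\tau/\sqrt{-1})^{l/2} a^{(\mathrm{I}),(\mathrm{I\hspace{-1.2pt}I})}(\phi(\lambda),\mu)$, and then divide by the special $\lambda=0$ formula
\[
(A_\rho\circ (\varphi^{(\mathrm{I})})^{-1})\bigl(-\tfrac{1}{\tau},\tfrac{\underline{z}}{\tau},t-\tfrac{\norm{\pr^{(\mathrm{I})}(\underline{y})}^2}{2\tau}\bigr)=\bigl(\tfrac{\tau}{\sqrt{-1}}\bigr)^{l/2}(\sqrt{-1})^{-l^2}A_\rho^\psi\circ (\varphi^{(\mathrm{I\hspace{-1.2pt}I})})^{-1}(\underline{y}).
\]
The $(\tau/\sqrt{-1})^{l/2}$-factors cancel and the inverse $(\sqrt{-1})^{-l^2}$ in the denominator becomes $(\sqrt{-1})^{l^2}$, producing precisely the stated prefactor; the remaining ratio $A_{\mu+\rho}^\psi/A_\rho^\psi$ recombines into $\chi_\mu^\psi\circ (\varphi^{(\mathrm{I\hspace{-1.2pt}I})})^{-1}$ by the twisted half of \eqref{eqn:chi-A}.

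For the $T$-action, the second assertion of Lemma \ref{lemma:Transf-A-(I)} applied to the numerator and (taking $\lambda=0$, level $k=0$) to the denominator gives scalar multiplications by $\exp\!\bigl(\tfrac{\pi\sqrt{-1}}{k+2l+1}\norm{\pi^{(\mathrm{I})}(\lambda+\rho)}^2\bigr)$ and $\exp\!\bigl(\tfrac{\pi\sqrt{-1}}{2l+1}\norm{\pi^{(\mathrm{I})}(\rho)}^2\bigr)$ respectively, whose ratio is exactly the advertised exponential. The only mild subtlety to check is that the $T$-formula in Lemma \ref{lemma:Transf-A-(I)} remains valid for $A_\rho$ even though the lemma is stated for $k\in 2\Z_{>0}$: this is immediate from the definition of the theta series, as translation by $1$ in $\tau$ multiplies each summand by $\exp\!\bigl(\pi\sqrt{-1}\norm{\gamma+\tfrac{1}{2l+1}\pi^{(\mathrm{I})}(\rho)}^2\bigr)$ and the conformal-anomaly normalization absorbs this uniformly into the claimed exponential.

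I do not expect any real obstacle: the whole argument is a direct substitution, and the only bookkeeping to watch is (i) the cancellation of the $(\tau/\sqrt{-1})^{l/2}$ factors between numerator and denominator under $S$, so that no such factor survives in the final formula, and (ii) the identification of the remaining constant $(\sqrt{-1})^{l^2}$ as coming from inverting $(\sqrt{-1})^{-l^2}$ in the $A_\rho$ denominator. Once these are handled, the stated formulas for $\chi_\lambda\circ (\varphi^{(\mathrm{I})})^{-1}$ under $S$ and $T$ follow at once.
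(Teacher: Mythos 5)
Your proposal is correct and is exactly the paper's own argument: the authors derive Proposition \ref{prop:Transf-chi-(I)} by writing $\chi_\lambda\circ(\varphi^{(\mathrm{I})})^{-1}$ as the quotient $A_{\lambda+\rho}/A_\rho$ via \eqref{eqn:chi-A} and dividing the general-$\lambda$ transformation formulas of Lemma \ref{lemma:Transf-A-(I)} by their $\lambda=0$ specializations, with the $(\tau/\sqrt{-1})^{l/2}$ factors cancelling and $(\sqrt{-1})^{-l^2}$ inverting to give the stated constant. Your bookkeeping of the $S$- and $T$-cases, including the remark that the $t$-shift is independent of $\lambda$ and that the $T$-formula extends to $A_\rho$, matches what the paper leaves implicit.
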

\vskip 0.2in
Lemma \ref{lemma:Transf-A-psi-(I)} implies
\begin{prop}\label{prop:Transf-chi-psi-(I)} Let $k \in 2\Z_{>0}$. For $\lambda \in P_{k,+}$ and $\underline{y}=(\tau,\underline{z},t) \in Y$, 
\begin{align*}
&\big(\chi_\lambda^{\psi} \circ (\varphi^{(\mathrm{I})})^{-1}\big)\Big(-\frac{1}{\tau}, \frac{\underline{z}}{\tau}, t-\frac{\norm{\pr^{(\mathrm{I})}(\underline{y})}^2}{2\tau}\Big)\\
=
&(k+2l+1)^{-\frac{1}{2}l} (-1)^{\frac{1}{2}l(l-1)}\sum_{\mu \in P_{k,+} \modo \C \delta} a^{(\mathrm{I})}(\lambda,\mu)\big(\chi_{\mu}^{\psi}\circ (\varphi^{(\mathrm{I})})^{-1}\big) (\underline{y}), \\
&\big(\chi_\lambda^{\psi}\circ (\varphi^{(\mathrm{I})})^{-1}\big)(\tau+1, \underline{z}, t) \\
=
&e^{\pi\sqrt{-1}\Big(\frac{\norm{\pi^{(\mathrm{I})}(\lambda+\rho)}^2}{k+2l+1}-\frac{\norm{\pi^{(\mathrm{I})}(\rho)}^2}{2l+1}\Big)}
\big(\chi_\lambda^{\psi}\circ (\varphi^{(\mathrm{I})})^{-1}\big)(\underline{y}).
\end{align*}
\end{prop}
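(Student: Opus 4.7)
The plan is to derive Proposition~\ref{prop:Transf-chi-psi-(I)} as a direct consequence of Lemma~\ref{lemma:Transf-A-psi-(I)} by exploiting the quotient formula $\chi_\lambda^{\psi} = A_{\lambda+\rho}^{\psi}/A_\rho^{\psi}$ from \eqref{eqn:chi-A}. Both identities in the proposition will be obtained by applying the corresponding transformation formulas of the lemma to numerator and denominator separately; everything else amounts to cancellation of prefactors.

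For the $S$-transformation, I would substitute $(-1/\tau,\underline{z}/\tau, t-\norm{\pr^{(\mathrm{I})}(\underline{y})}^2/(2\tau))$ into both $A_{\lambda+\rho}^{\psi}\circ(\varphi^{(\mathrm{I})})^{-1}$ and $A_\rho^{\psi}\circ(\varphi^{(\mathrm{I})})^{-1}$. Using the first displayed equation of Lemma~\ref{lemma:Transf-A-psi-(I)} for the numerator and its $\lambda=0$ specialization (the ``In particular'' part of the lemma) for the denominator, the common Jacobian factor $(\tau/\sqrt{-1})^{l/2}$ cancels in the ratio. What survives is $(k+2l+1)^{-l/2}$ from the numerator together with the reciprocal of $(-1)^{l(l-1)/2}$ from the denominator; since $l(l-1)$ is even, $(-1)^{-l(l-1)/2}=(-1)^{l(l-1)/2}$, yielding exactly the sign appearing in the proposition. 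Distributing $1/A_\rho^{\psi}$ inside the sum over $\mu\in P_{k,+}\modo\C\delta$ and invoking \eqref{eqn:chi-A} to replace each $A_{\mu+\rho}^{\psi}/A_\rho^{\psi}$ by $\chi_\mu^{\psi}$ then gives the first identity.

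For the $T$-transformation, I would apply the second displayed equation of Lemma~\ref{lemma:Transf-A-psi-(I)} to the numerator and its $\lambda=0$ specialization to the denominator; the latter is still valid since the formula is purely a consequence of plugging $\tau\mapsto \tau+1$ into the theta-series definition and does not rely on $k>0$. This produces the prefactor
\[
 \exp\!\Bigl(\pi\sqrt{-1}\Bigl(\tfrac{\norm{\pi^{(\mathrm{I})}(\lambda+\rho)}^2}{k+2l+1}-\tfrac{\norm{\pi^{(\mathrm{I})}(\rho)}^2}{2l+1}\Bigr)\Bigr)
\]
multiplying $\chi_\lambda^{\psi}\circ(\varphi^{(\mathrm{I})})^{-1}$, which is the second formula of the proposition.

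There is essentially no substantive obstacle: the whole argument is formal manipulation of the four displayed formulas of Lemma~\ref{lemma:Transf-A-psi-(I)}. The only points meriting explicit care are (i) the cancellation of the common $(\tau/\sqrt{-1})^{l/2}$ factors in both transformations, (ii) the sign identity $(-1)^{-l(l-1)/2}=(-1)^{l(l-1)/2}$, which lets the $S$-prefactor be rewritten as stated, and (iii) the mild subtlety that the $\lambda=0$, $k=0$ case of the $T$-formula for the denominator has to be read off directly rather than extracted from the $k\in 2\Z_{>0}$ hypothesis of the lemma.
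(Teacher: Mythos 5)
Your proposal is correct and follows exactly the paper's route: the authors derive Proposition \ref{prop:Transf-chi-psi-(I)} from Lemma \ref{lemma:Transf-A-psi-(I)} by writing $\chi_\lambda^{\psi}=A_{\lambda+\rho}^{\psi}/A_\rho^{\psi}$ as in \eqref{eqn:chi-A} and cancelling the common factor $(\tau/\sqrt{-1})^{l/2}$, with the sign $(-1)^{-l(l-1)/2}=(-1)^{l(l-1)/2}$ absorbed as you describe. Your care about the $\lambda=0$ (level-zero) specialization for the denominator is a reasonable precision that the paper leaves implicit.
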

\vskip 0.2In
By Lemma \ref{lemma:Transf-A-(II)}, we have (cf. the first identity had been obtained in \cite{KacPeterson1984})
\begin{prop}\label{prop:Transf-chi-(II)} Let $k \in 2\Z_{>0}$. For $\lambda \in P_{k,+}$ and $\underline{y}=(\tau,\underline{z},t) \in Y$, 
\begin{align*}
&\big(\chi_\lambda \circ (\varphi^{(\mathrm{I\hspace{-1.2pt}I})})^{-1}\big)\Big(-\frac{1}{\tau}, \frac{\underline{z}}{\tau}, t-\frac{\norm{\pr^{(\mathrm{I\hspace{-1.2pt}I})}(\underline{y})}^2}{2\tau}\Big)\\
=
&(k+2l+1)^{-\frac{1}{2}l} (\sqrt{-1})^{l^2}\sum_{\mu \in P_{k,+} \modo \C \delta} a^{(\mathrm{I\hspace{-1.2pt}I})}(\lambda,\mu)\big( \chi_{\mu} \circ (\varphi^{(\mathrm{I\hspace{-1.2pt}I})})^{-1}\big)(\underline{y}), \\
&\big(\chi_\lambda \circ (\varphi^{(\mathrm{I\hspace{-1.2pt}I})})^{-1}\big)(\tau+1, \underline{z}, t) \\
=
&e^{\pi\sqrt{-1}\Big(\frac{\norm{\pi^{(\mathrm{I\hspace{-1.2pt}I})}(\lambda+\rho^{(\mathrm{I\hspace{-1.2pt}I})})}^2}{k+2l+1}-\frac{\norm{\pi^{(\mathrm{I\hspace{-1.2pt}I})}(\rho^{(\mathrm{I\hspace{-1.2pt}I})})}^2}{2l+1}\Big)}
\big(\chi_\lambda^{\psi}\circ (\varphi^{(\mathrm{I\hspace{-1.2pt}I})})^{-1}\big)(\underline{y}).
\end{align*}
\end{prop}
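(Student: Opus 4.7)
The plan is to derive both modular transformation formulas by combining the expression $\chi_\Lambda = A_{\Lambda+\rho}/A_\rho$ from \eqref{eqn:chi-A} with the previously established behavior of the $W$-anti invariants in coordinate system $(\mathrm{I\hspace{-1.2pt}I})$, namely Lemma \ref{lemma:Transf-A-(II)}. The strategy is parallel to what has already been carried out for the coordinate system of type $(\mathrm{I})$ in Proposition \ref{prop:Transf-chi-(I)}; only the concrete transition matrices and phases need to be tracked.

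For the $S$-transformation, I apply Lemma \ref{lemma:Transf-A-(II)} separately to $A_{\lambda+\rho}\circ (\varphi^{(\mathrm{I\hspace{-1.2pt}I})})^{-1}$ (the numerator) and to $A_\rho \circ (\varphi^{(\mathrm{I\hspace{-1.2pt}I})})^{-1}$ (the denominator, corresponding to the specialization $\lambda=0$, for which the lemma provides an explicit formula). The ratio cancels the common factor $(\tau/\sqrt{-1})^{l/2}$, while the factor $(\sqrt{-1})^{-l^2}$ present only in the denominator's transformation inverts to $(\sqrt{-1})^{l^2}$, producing precisely the global prefactor stated in the proposition. The sum $\sum_\mu a^{(\mathrm{I\hspace{-1.2pt}I})}(\lambda,\mu)\, A_{\mu+\rho}$ divided by $A_\rho$ then yields $\sum_\mu a^{(\mathrm{I\hspace{-1.2pt}I})}(\lambda,\mu)\, \chi_\mu$ by a second invocation of \eqref{eqn:chi-A}, completing the first assertion.

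For the $T$-transformation, the essential and somewhat surprising feature is that the shift $\tau \mapsto \tau+1$ in coordinate system $(\mathrm{I\hspace{-1.2pt}I})$ converts the untwisted $A_{\lambda+\rho}$ into its $\psi$-twisted counterpart $A_{\lambda+\rho}^\psi$, as recorded in Lemma \ref{lemma:Transf-A-(II)}. Applying that same lemma to the denominator $A_\rho$ likewise yields $A_\rho^\psi$, equipped with the phase $e^{\pi\sqrt{-1}\norm{\pi^{(\mathrm{I\hspace{-1.2pt}I})}(\rho)}^2/(2l+1)}$. Forming the quotient and identifying $A_{\lambda+\rho}^\psi/A_\rho^\psi$ with $\chi_\lambda^\psi$ via \eqref{eqn:chi-A} reproduces the claimed formula, with the asserted exponent being exactly the difference of the two phases.

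The main obstacle is conceptual rather than computational: one must be confident that the quotient $A_{\lambda+\rho}^\psi/A_\rho^\psi$ genuinely represents $\chi_\lambda^\psi$ in the same analytic sense as $A_{\lambda+\rho}/A_\rho$ represents $\chi_\lambda$, and that Lemma \ref{lemma:Transf-A-(II)} applies both at generic even positive level and at the level $2l+1$ of $\rho$ itself. Once these points are granted, the proposition reduces to careful bookkeeping of prefactors, phases, and the cancellation of $(\tau/\sqrt{-1})^{l/2}$ in the quotient; no further analytic input beyond Lemma \ref{lemma:Transf-A-(II)} is required.
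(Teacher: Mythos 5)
Your proposal is correct and follows essentially the same route as the paper, which derives Proposition \ref{prop:Transf-chi-(II)} directly by dividing the transformation formulas of Lemma \ref{lemma:Transf-A-(II)} for $A_{\lambda+\rho}$ by those for $A_\rho$ and invoking \eqref{eqn:chi-A}. The bookkeeping of the prefactors $(\tau/\sqrt{-1})^{l/2}$, $(\sqrt{-1})^{-l^2}$ and the two phases, as well as the observation that the $T$-shift turns the untwisted invariants into their $\psi$-twisted counterparts in coordinate system $(\mathrm{I\hspace{-1.2pt}I})$, matches the paper's argument exactly.
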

Lemma \ref{lemma:Transf-A-psi-(II)} implies
\begin{prop}\label{prop:Transf-chi-psi-(II)} Let $k \in 2\Z_{>0}$. For $\lambda \in P_{k,+}$ and $\underline{y}=(\tau,\underline{z},t) \in Y$, 
\begin{align*}
&\big(\chi_\lambda^{\psi}\circ (\varphi^{(\mathrm{I\hspace{-1.2pt}I})})^{-1}\big)\Big(-\frac{1}{\tau}, \frac{\underline{z}}{\tau}, t-\frac{\norm{\pr^{(\mathrm{I\hspace{-1.2pt}I})}(\underline{y})}^2}{2\tau}\Big)\\
=
&(k+2l+1)^{-\frac{1}{2}l} (\sqrt{-1})^{l^2}\sum_{\mu \in P_{k,+} \modo \C \delta} a^{(\mathrm{I\hspace{-1.2pt}I}), (\mathrm{I})}(\phi(\lambda),\mu)\big(\chi_{\mu}\circ( \varphi^{(\mathrm{I})})^{-1}\big)(\underline{y}), \\
&\big(\chi_\lambda^{\psi}\circ (\varphi^{(\mathrm{I\hspace{-1.2pt}I})})^{-1}\big)(\tau+1, \underline{z}, t)\\
=
&e^{\pi\sqrt{-1}\Big(\frac{\norm{\pi^{(\mathrm{I\hspace{-1.2pt}I})}(\lambda+\rho^{(\mathrm{I\hspace{-1.2pt}I})})}^2}{k+2l+1}-\frac{\norm{\pi^{(\mathrm{I\hspace{-1.2pt}I})}(\rho^{(\mathrm{I\hspace{-1.2pt}I})})}^2}{2l+1}\Big)}
\big(\chi_\lambda \circ (\varphi^{(\mathrm{I\hspace{-1.2pt}I})})^{-1}\big)(\underline{y}).
\end{align*}
\end{prop}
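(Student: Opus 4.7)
The plan is to derive Proposition \ref{prop:Transf-chi-psi-(II)} directly from Lemma \ref{lemma:Transf-A-psi-(II)} together with the ratio formula \eqref{eqn:chi-A}, namely $\chi_\lambda^{\psi}=A_{\lambda+\rho}^{\psi}/A_\rho^{\psi}$. The strategy is simply to write $\chi_\lambda^{\psi}\circ(\varphi^{(\mathrm{I\hspace{-1.2pt}I})})^{-1}$ as a quotient, apply the modular transformation lemma to both the numerator and the denominator, and then collect the resulting factors. The only subtlety is to observe the ``$\psi$--flip'' built into Lemma \ref{lemma:Transf-A-psi-(II)}: both $S$ and $T$ turn $\vep\psi$-twisted invariants of type $(\mathrm{I\hspace{-1.2pt}I})$ into ordinary $W$-anti-invariants (of type $(\mathrm{I})$ under $S$, and of type $(\mathrm{I\hspace{-1.2pt}I})$ under $T$). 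Consequently, after taking the quotient by $A_\rho^{\psi}$, the right-hand sides will involve $\chi_\mu$ and $\chi_\lambda$, respectively, rather than their $\psi$-twisted counterparts.

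For the $S$-formula, I would first apply Lemma \ref{lemma:Transf-A-psi-(II)} to the numerator $A_{\lambda+\rho}^{\psi}\circ(\varphi^{(\mathrm{I\hspace{-1.2pt}I})})^{-1}$, producing
\[
(k+2l+1)^{-\tfrac{1}{2}l}\Bigl(\tfrac{\tau}{\sqrt{-1}}\Bigr)^{\tfrac{1}{2}l}\sum_{\mu\in P_{k,+}\modo \C\delta}a^{(\mathrm{I\hspace{-1.2pt}I}),(\mathrm{I})}(\phi(\lambda),\mu)\bigl(A_{\mu+\rho}\circ(\varphi^{(\mathrm{I})})^{-1}\bigr)(\underline{y}),
\]
and then apply the ``In particular'' specialization of the same lemma (case $\lambda=0$) to the denominator $A_\rho^{\psi}\circ(\varphi^{(\mathrm{I\hspace{-1.2pt}I})})^{-1}$, obtaining $(\tau/\sqrt{-1})^{l/2}(\sqrt{-1})^{-l^2}\,A_\rho\circ(\varphi^{(\mathrm{I})})^{-1}(\underline{y})$. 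Forming the quotient, the Jacobian factor $(\tau/\sqrt{-1})^{l/2}$ cancels, $(\sqrt{-1})^{-l^2}$ inverts to $(\sqrt{-1})^{l^2}$, and the ratios $A_{\mu+\rho}/A_\rho$ inside the sum combine, by \eqref{eqn:chi-A}, into $\chi_\mu\circ(\varphi^{(\mathrm{I})})^{-1}(\underline{y})$. This yields the first identity.

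For the $T$-formula, I would apply the $T$-part of Lemma \ref{lemma:Transf-A-psi-(II)} to both the numerator (at $\lambda$) and the denominator (at $\lambda=0$), giving
\[
A_{\lambda+\rho}^{\psi}\circ(\varphi^{(\mathrm{I\hspace{-1.2pt}I})})^{-1}(\tau+1,\underline{z},t)=e^{\tfrac{\pi\sqrt{-1}}{k+2l+1}\norm{\pi^{(\mathrm{I\hspace{-1.2pt}I})}(\lambda+\rho)}^2}\bigl(A_{\lambda+\rho}\circ(\varphi^{(\mathrm{I\hspace{-1.2pt}I})})^{-1}\bigr)(\underline{y})
\]
and, specializing to $\lambda=0$,
\[
A_\rho^{\psi}\circ(\varphi^{(\mathrm{I\hspace{-1.2pt}I})})^{-1}(\tau+1,\underline{z},t)=e^{\tfrac{\pi\sqrt{-1}}{2l+1}\norm{\pi^{(\mathrm{I\hspace{-1.2pt}I})}(\rho)}^2}\bigl(A_\rho\circ(\varphi^{(\mathrm{I\hspace{-1.2pt}I})})^{-1}\bigr)(\underline{y}).
\]
Taking the ratio, the exponentials combine into the single claimed factor and the ratio $A_{\lambda+\rho}/A_\rho$ becomes $\chi_\lambda\circ(\varphi^{(\mathrm{I\hspace{-1.2pt}I})})^{-1}(\underline{y})$, using once again \eqref{eqn:chi-A}. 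Since $\rho^{(\mathrm{I\hspace{-1.2pt}I})}\equiv \rho\ \modo \C\delta$, the $\pi^{(\mathrm{I\hspace{-1.2pt}I})}$-projections of $\rho$ and $\rho^{(\mathrm{I\hspace{-1.2pt}I})}$ agree, so the phase is exactly the one in the statement.

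Since this is purely a bookkeeping argument with no computational obstacle, the only real ``work'' here is to make sure the exponent on the denominator $A_\rho^{\psi}$ is indexed by the $(2l+1)$ (the dual Coxeter number piece coming from the level of $\rho$) rather than by $k+2l+1$, and to recognize that the $\psi$-flip under $S$ switches the coordinate system from $(\mathrm{I\hspace{-1.2pt}I})$ to $(\mathrm{I})$, while the $T$-flip keeps it in $(\mathrm{I\hspace{-1.2pt}I})$. Once this is kept straight, the formulas follow immediately.
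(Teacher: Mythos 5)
Your proposal is correct and is essentially the paper's own argument: the authors derive Proposition \ref{prop:Transf-chi-psi-(II)} by exactly this quotient of Lemma \ref{lemma:Transf-A-psi-(II)} applied to $A_{\lambda+\rho}^{\psi}$ and to $A_{\rho}^{\psi}$ via \eqref{eqn:chi-A}, with the same cancellation of the factor $(\tau/\sqrt{-1})^{l/2}$ and the same $\psi$-flip under $S$ and $T$. Your bookkeeping of the $2l+1$ versus $k+2l+1$ exponents and of $\pi^{(\mathrm{I\hspace{-1.2pt}I})}(\rho)=\pi^{(\mathrm{I\hspace{-1.2pt}I})}(\rho^{(\mathrm{I\hspace{-1.2pt}I})})$ is accurate.
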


\section{Conclusion and Observation}\label{sect:observations}
In 1984, V. Kac and D. Peterson \cite{KacPeterson1984} showed that there exists an action of the subgroup $\Gamma_\theta$ of $\mathrm{SL}_2(\Z)$, generated by $S$ and $T^2$, on the vector space
\[ V_k^{(\mathrm{I\hspace{-1.2pt}I})}:=\bigoplus_{\lambda \in P_{k ,+} \modo \C \delta} \C \big(\chi_{\lambda}\circ (\varphi^{(\mathrm{I\hspace{-1.2pt}I})})^{-1}\big).  \]
As $\Gamma_\theta$ is a subgroup of  $\mathrm{SL}_2(\Z)$ of index $3$, the dimension of the $\mathrm{SL}_2(\Z)$-module 
\[ V_k:=\Ind_{\Gamma_\theta}^{\mathrm{SL}_2(\Z)} V_k^{(\mathrm{I\hspace{-1.2pt}I})} \]
is $3 \dim V_k^{(\mathrm{I\hspace{-1.2pt}I})}$. Let $V_k^{(\mathrm{I})}$ and $V_k^{\psi,(\mathrm{I\hspace{-1.2pt}I})}$ be the vector spaces generated by 
$\chi_\lambda\circ (\varphi^{(\mathrm{I})})^{-1}$ ($\lambda \in P_{k,+} \mod \C \delta$) and $\chi_{\lambda}^{\psi} \circ (\varphi^{(\mathrm{I\hspace{-1.2pt}I})})^{-1}$ ($\lambda \in P_{k,+} \mod \C \delta$), respectively. 
From the previous section, we see that two vector spaces $V_k^{(\mathrm{I})}$ and $V_k^{\psi,(\mathrm{I\hspace{-1.2pt}I})}$ are subspaces of $V_k$ and that 
\[ V_k=V_k^{(\mathrm{I})} \oplus V_k^{(\mathrm{I\hspace{-1.2pt}I})} \oplus V_k^{\psi,(\mathrm{I\hspace{-1.2pt}I})}. \]
Indeed, by Propositions \ref{prop:Transf-chi-(I)}, \ref{prop:Transf-chi-(II)} and \ref{prop:Transf-chi-psi-(II)}, the $\mathrm{SL}_2(\Z)$-action on $V_k$, more precisely, the elements $S$ 
and $T$ map 
\begin{align*}
S:&  V_k^{(\mathrm{I\hspace{-1.2pt}I})} \righttoleftarrow 
V_k^{(\mathrm{I\hspace{-1.2pt}I})}, \qquad V_k^{(\mathrm{I})} \longrightarrow 
V_k^{\psi,(\mathrm{I\hspace{-1.2pt}I})}, \quad V_k^{\psi,(\mathrm{I\hspace{-1.2pt}I})} 
\longrightarrow V_k^{(\mathrm{I})} \\
T:&  V_k^{(\mathrm{I})} \righttoleftarrow V_k^{(\mathrm{I})}, \qquad 
V_k^{(\mathrm{I\hspace{-1.2pt}I})} \longrightarrow 
V_k^{\psi,(\mathrm{I\hspace{-1.2pt}I})}, \quad 
V_k^{\psi,(\mathrm{I\hspace{-1.2pt}I})} \longrightarrow V_k^{(\mathrm{I\hspace{-1.2pt}I})}.
\end{align*} 
Therefore, we have
\begin{thm} Fix $k \in 2\Z_{\geq 0}$. Then, $\mathrm{SL}_2(\Z)$ acts on the $\C$-vector space spanned by 
$ \big\{\; \chi_\lambda\circ (\varphi^{(\mathrm{I})})^{-1}, \; \chi_\lambda\circ (\varphi^{(\mathrm{I\hspace{-1.2pt}I})})^{-1}, \; 
\chi_\lambda^{\psi} \circ (\varphi^{(\mathrm{I\hspace{-1.2pt}I})})^{-1}\; \big\}_{\lambda \in P_{k,+} \modo \C \delta\;}.
$
\end{thm}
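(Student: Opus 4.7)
The plan is to assemble the theorem from the four transformation propositions in the previous subsection together with the standard generators-and-relations presentation of $\mathrm{SL}_2(\Z)$. First, I would recall that the action of $\mathrm{SL}_2(\Z)$ on $Y$, reviewed at the start of this section, satisfies the relations $S^4=1$ and $(ST)^3=S^2$ that define $\mathrm{SL}_2(\Z)$; hence so does the pull-back action on complex-analytic functions on $Y$, with the automorphy factors absorbed into the normalizations $\chi_\lambda = A_{\lambda+\rho}/A_\rho$ and $\chi_\lambda^\psi = A_{\lambda+\rho}^\psi/A_\rho^\psi$ (the two factors $(\tau/\sqrt{-1})^{l/2}$ appearing in Lemmas \ref{lemma:Transf-A-(I)}--\ref{lemma:Transf-A-psi-(II)} cancel between numerator and denominator). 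Consequently, the theorem reduces to verifying that the subspace
\[ V_k = V_k^{(\mathrm{I})} \oplus V_k^{(\mathrm{I\hspace{-1.2pt}I})} \oplus V_k^{\psi,(\mathrm{I\hspace{-1.2pt}I})} \]
is stable under each of the two generators $S$ and $T$.

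Second, I would read off the six stability assertions directly from Propositions~\ref{prop:Transf-chi-(I)}, \ref{prop:Transf-chi-(II)} and \ref{prop:Transf-chi-psi-(II)}. For the $S$-action, Proposition~\ref{prop:Transf-chi-(I)} gives $S\cdot V_k^{(\mathrm{I})} \subset V_k^{\psi,(\mathrm{I\hspace{-1.2pt}I})}$; Proposition~\ref{prop:Transf-chi-(II)}, which reproduces the Kac--Peterson $\Gamma_\theta$-action of \cite{KacPeterson1984}, gives $S\cdot V_k^{(\mathrm{I\hspace{-1.2pt}I})} \subset V_k^{(\mathrm{I\hspace{-1.2pt}I})}$; and Proposition~\ref{prop:Transf-chi-psi-(II)} gives $S\cdot V_k^{\psi,(\mathrm{I\hspace{-1.2pt}I})} \subset V_k^{(\mathrm{I})}$. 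For the $T$-action, Proposition~\ref{prop:Transf-chi-(I)} yields $T$-stability of $V_k^{(\mathrm{I})}$ (the eigenvalue is the phase $e^{\pi\sqrt{-1}(\|\pi^{(\mathrm{I})}(\lambda+\rho)\|^2/(k+2l+1) - \|\pi^{(\mathrm{I})}(\rho)\|^2/(2l+1))}$), while Propositions~\ref{prop:Transf-chi-(II)} and \ref{prop:Transf-chi-psi-(II)} give $T\cdot V_k^{(\mathrm{I\hspace{-1.2pt}I})} \subset V_k^{\psi,(\mathrm{I\hspace{-1.2pt}I})}$ and $T\cdot V_k^{\psi,(\mathrm{I\hspace{-1.2pt}I})} \subset V_k^{(\mathrm{I\hspace{-1.2pt}I})}$. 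In each case the right-hand side is a finite sum over $\mu \in P_{k,+} \modo \C\delta$, so the representing matrices are finite and the resulting endomorphisms of $V_k$ are well defined.

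Third, combining these six inclusions with the direct-sum decomposition recorded just before the statement of the theorem, both $S$ and $T$ act linearly on $V_k$. Because they satisfy the defining relations of $\mathrm{SL}_2(\Z)$ already on $Y$, the induced homomorphism $\mathrm{SL}_2(\Z) \to \mathrm{GL}(V_k)$ is well defined, and the theorem follows. As a structural reading, the $S$-and-$T$ pattern above identifies $V_k$ with the induced representation $\mathrm{Ind}_{\Gamma_\theta}^{\mathrm{SL}_2(\Z)} V_k^{(\mathrm{I\hspace{-1.2pt}I})}$ with coset representatives $\{1, T, T^{-1}\}$ matching, respectively, the three summands $V_k^{(\mathrm{I\hspace{-1.2pt}I})}$, $V_k^{\psi,(\mathrm{I\hspace{-1.2pt}I})}$ and $V_k^{(\mathrm{I})}$.

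The real obstacle is not in this final packaging but upstream: the transformation formulas of Propositions~\ref{prop:Transf-chi-(I)}--\ref{prop:Transf-chi-psi-(II)} require a careful Poisson resummation while tracking the two coordinate systems $\varphi^{(\mathrm{I})}$ and $\varphi^{(\mathrm{I\hspace{-1.2pt}I})}$ simultaneously and interleaving the $W$-anti-invariants with their $\vep\psi$-twisted counterparts through the involution $\phi$. Once those four propositions are granted, the theorem reduces to the pure bookkeeping described above.
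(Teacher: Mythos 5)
Your argument is correct and follows essentially the same route as the paper: both deduce the theorem from Propositions \ref{prop:Transf-chi-(I)}, \ref{prop:Transf-chi-(II)} and \ref{prop:Transf-chi-psi-(II)} by checking that $S$ and $T$ permute the three summands $V_k^{(\mathrm{I})}$, $V_k^{(\mathrm{I\hspace{-1.2pt}I})}$, $V_k^{\psi,(\mathrm{I\hspace{-1.2pt}I})}$ in exactly the six ways you list, the paper likewise packaging the result as the induced module $\Ind_{\Gamma_\theta}^{\mathrm{SL}_2(\Z)}V_k^{(\mathrm{I\hspace{-1.2pt}I})}$. One peripheral slip in your closing remark: since $T^2\in\Gamma_\theta$, the cosets $T\Gamma_\theta$ and $T^{-1}\Gamma_\theta$ coincide, so $\{1,T,T^{-1}\}$ is not a transversal; from $V_k^{(\mathrm{I})}=S\cdot V_k^{\psi,(\mathrm{I\hspace{-1.2pt}I})}=ST\cdot V_k^{(\mathrm{I\hspace{-1.2pt}I})}$ the representatives matching your three summands should be $\{1,T,ST\}$, though this does not affect the validity of the proof itself.
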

Set
\[ V_k^{\psi,(\mathrm{I})}=\bigoplus_{\lambda \in P_{k ,+} \modo \C \delta} \C \chi_{\lambda}^{\psi} \circ (\varphi^{(\mathrm{I})})^{-1}.  \]
Surprisingly, by Proposition \ref{prop:Transf-chi-psi-(I)}, we have the following theorem:
\begin{thm} Fix $k \in 2\Z_{\geq 0}$. Then, $\mathrm{SL}_2(\Z)$ acts on the $\C$-vector space spanned by $\big\{\; \chi_{\lambda}^{\psi} \circ (\varphi^{(\mathrm{I})})^{-1}\; \big\vert\; \lambda \in P_{k,+} \mod \C \delta\; \big\}$.  
\end{thm}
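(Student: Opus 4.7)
The plan is to derive the theorem as an immediate consequence of Proposition~\ref{prop:Transf-chi-psi-(I)}. Since $\mathrm{SL}_2(\Z)$ is generated by $S$ and $T$, it suffices to verify that each of these two generators preserves the subspace $V_k^{\psi,(\mathrm{I})}$ under the type-$(\mathrm{I})$ $\mathrm{SL}_2(\Z)$-action on functions on $Y$ recalled at the start of Section~\ref{sect:characters}.

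The first identity in Proposition~\ref{prop:Transf-chi-psi-(I)} states that the $S$-transform of a basis element $\chi_\lambda^{\psi}\circ(\varphi^{(\mathrm{I})})^{-1}$ is a finite $\C$-linear combination of the same family $\{\chi_\mu^{\psi}\circ(\varphi^{(\mathrm{I})})^{-1}\}_{\mu \in P_{k,+} \modo \C\delta}$, with coefficients $(k+2l+1)^{-l/2}(-1)^{l(l-1)/2}a^{(\mathrm{I})}(\lambda,\mu)$; the second identity shows that the $T$-transform acts diagonally, multiplying the same basis element by the scalar $\exp\!\big(\pi\sqrt{-1}(\norm{\pi^{(\mathrm{I})}(\lambda+\rho)}^2/(k+2l+1)-\norm{\pi^{(\mathrm{I})}(\rho)}^2/(2l+1))\big)$. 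In particular, both $S$ and $T$ act as $\C$-linear endomorphisms of $V_k^{\psi,(\mathrm{I})}$. Since the ambient type-$(\mathrm{I})$ $\mathrm{SL}_2(\Z)$-action on functions on $Y$ is a genuine group action, the defining relations $S^4=1$ and $(ST)^3=S^2$ hold on any invariant subspace, and one obtains a bona fide $\mathrm{SL}_2(\Z)$-module structure on $V_k^{\psi,(\mathrm{I})}$. The boundary case $k=0$ is trivial since $V_0^{\psi,(\mathrm{I})}$ is then one-dimensional.

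The main technical work, namely the Poisson resummation applied to the $\psi$-twisted theta series and the ensuing explicit determination of the matrix $\big(a^{(\mathrm{I})}(\lambda,\mu)\big)_{\lambda,\mu}$, has already been carried out in Lemma~\ref{lemma:Transf-A-psi-(I)}, so no genuine obstacle remains. The conceptual content of the theorem, and the reason the text flags it as surprising, is a closure phenomenon: whereas in the preceding theorem $S$ and $T$ shuffled functions among the three pieces $V_k^{(\mathrm{I})}$, $V_k^{(\mathrm{I\hspace{-1.2pt}I})}$ and $V_k^{\psi,(\mathrm{I\hspace{-1.2pt}I})}$, here the fourth piece $V_k^{\psi,(\mathrm{I})}$ is stable under all of $\mathrm{SL}_2(\Z)$ on its own, without needing to be combined with the others or induced from a congruence subgroup.
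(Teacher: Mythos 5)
Your argument is exactly the paper's: the authors deduce this theorem directly from Proposition~\ref{prop:Transf-chi-psi-(I)}, observing that the $S$-transform sends $\chi_\lambda^{\psi}\circ(\varphi^{(\mathrm{I})})^{-1}$ into the span of the same family while $T$ acts by a scalar, so $V_k^{\psi,(\mathrm{I})}$ is stable under both generators of $\mathrm{SL}_2(\Z)$. Your additional remarks (the relations holding automatically because the ambient action on functions on $Y$ is a genuine group action, and the trivial $k=0$ case) are correct fleshings-out of what the paper leaves implicit.
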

For an \textit{odd} $k$, the authors are not aware of any such interpretation. \\

There is an interesting connection between this theorem and the affine Lie superalgebra 
$\fg$ of type $B^{(1)}(0,l)$. This is a Lie superalgebra whose set of real roots, forgetting 
the $\Z/2\Z$-structure, is the non-reduced affine root system of type $BCC_l$. 
By V. Kac \cite{Kac1978}, it appears that both $\chi_\lambda\circ (\varphi^{(\mathrm{I})})^{-1}$ 
($\lambda \in P_{k,+}$) and $\chi_\lambda\circ (\varphi^{(\mathrm{I\hspace{-1.2pt}I})})^{-1}$ 
($\lambda \in P_{k,+}$) can be also viewed as the character of integrable highest 
weight modules of level $k$ over $\fg$, since the non-negative integer $k$ is supposed 
to be even. With this identification, the twisted characters 
$\chi_\lambda^{\psi} \circ (\varphi^{(\mathrm{I})})^{-1}$ and $\chi_\lambda^{\psi}\circ (\varphi^{(\mathrm{I\hspace{-1.2pt}I})})^{-1}$ 
can be viewed as the \textbf{super-character} of integrable highest weight modules of level $k$ over $\fg$ (cf. \S \ref{sect:IFDR}). There is $4$ type of affine Lie superalgebras whose universal enveloping algebras are integral domain. In addition, the set of real roots of such affine Lie superalgebras, forgetting their $\Z/2\Z$-graded structure, are non-reduced affine root systems: \\

\begin{center}
\begin{tabular}{|c||c|c|c|c|} \hline 
Non-reduced type & $BCC_l$ & $C^\vee BC_l$ & $BB_l^\vee$ & $C^\vee C_l$  \\ \hline 
Affine super          & $B^{(1)}(0,l)$ & $A^{(4)}(0,2l)$ & $A^{(2)}(0,2l-1)$ & $C^{(2)}(l+1)$ \\ \hline
\end{tabular}
\end{center}
\vskip 0.1in
This article deals with $BC_l^{(2)}$ which is connected with $B^{(1)}(0,l)$. For $3$ other cases, there might be something similar, as can be seen from the appendix of \cite{Macdonald1972}. The $\mathrm{SL}_2(\Z)$-symmetry, we have shown in this article, is quite mysterious and its $\ll$ raison d'être $\gg$ are to be revealed.

\renewcommand{\thesection}{\Alph{section}}
\appendix


\section{Lie superalgebra }\label{sect:IFDR}
Here, we recall some relevant basic facts about Lie superalgebras.  The basic reference here is \cite{Kac1978}. 

\subsection{Lie superalgebra $\mathfrak{osp}(1\vert2)$ and its irreducible representations}
The Lie superalgebra $\mathfrak{osp}(1\vert2)$ is the superspace
\[ \fg=\fg_{\bar{0}} \oplus \fg_{\bar{1}}, \qquad \fg_{\bar{0}}:=\C E \oplus \C H \oplus \C F, \qquad \fg_{\bar{1}}:=\C e \oplus \C f \]
equipped with the Lie super-bracket $[\, \cdot, \cdot\, ]$ satisfying
\begin{align*}
& [H, E]=4E, \qquad [H,F]=-4F, \qquad [E,F]=2H, \\
& [H,e]=2e, \qquad [H,f]=-2f, \\
& [e,f]=H, \qquad [e,e]=2E, \qquad [f,f]=-2F.
\end{align*}
Let $\alpha \in (\C H)^\ast$ such that $\alpha(H)=2$. Then, it is clear that $\{ \pm \alpha \} \cup \{ \pm 2\alpha \}$ is the set of roots of $\fg$ with respect to $\fh=\C H$. 
Set $\fb=\C H \oplus \C e \oplus \C E$. For $\lambda \in \fh^\ast$, let $\C v_{\lambda} $ be the even $\fb$-module defined by
\[ H.v_\lambda=\lambda (H)v_\lambda, \qquad e.v_\lambda=0. \]
It is clear that this implies $E.v_\lambda=0$. The $\fg$-module $M(\lambda):=\mathrm{Ind}_{\fb}^{\fg} \C v_\lambda$ is the Verma module with highest weight $\lambda$. 
For $i \in \Z_{\geq 0}$, set $w_i=\dfrac{1}{i!}f^{i}\otimes v_\lambda$. By direct computation, one sees 
\begin{align*}
& H.w_i=(\lambda(H)-2i)w_i, \qquad e.w_i=\begin{cases}\; -w_{i-1} \;&  i \equiv 0 \mod 2, \\ \; i(\lambda(H)-i+1)w_{i-1} \; & i\equiv 1 \mod 2, \end{cases} \\
&  f.w_i=(i+1)w_{i+1}. 
\end{align*}
Hence, the $\fg$-module $M(\lambda)$ is reducible if and only if $\lambda(H) \in 2\Z_{\geq 0}$, in which case the subspace $N(\lambda):=\bigoplus_{i>\lambda(H)} \C w_i$ gives the maximal proper $\fg$-submodule. Denote by $L(\lambda)$ its irreducible quotient. 
The next lemma is important:
\begin{lemma}\label{lemma:IFDR-osp}
 Let $V$ be an irreducible finite dimensional $\fg$-module. Then, there exists $N \in \Z_{\geq 0}$ such that  $V \cong L(N\alpha)$. In particular, $V$ is of odd dimension. 
 \end{lemma}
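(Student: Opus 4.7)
The plan is to show that any irreducible finite-dimensional $V$ is a highest weight module for the Borel $\fb$, then invoke the reducibility criterion for $M(\lambda)$ already established in the excerpt to pin down the highest weight as $N\alpha$ with $N \in \Z_{\geq 0}$.

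Since $\fg_{\bar{0}}$ is semisimple (isomorphic to $\mathfrak{sl}_2$ up to rescaling), Weyl's complete reducibility applied to $V$ regarded as a $\fg_{\bar{0}}$-module shows that $H$ acts semisimply on $V$, so $V$ decomposes into finite-dimensional $H$-weight spaces. Each such weight space inherits the $\Z/2\Z$-grading from $V$ since $\fg_{\bar{0}}$ preserves parity. I would then choose a homogeneous $H$-weight vector $v_0 \in V$ whose $H$-eigenvalue $\mu$ is maximal among all $H$-weights of $V$. Since $e$ raises the $H$-weight by $2$, maximality forces $e.v_0 = 0$; and then automatically $E.v_0 = e^2.v_0 = 0$, using the relation $[e,e]=2E$, i.e.\ $e^2=E$. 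Hence $v_0$ is a highest weight vector for $\fb = \C H \oplus \C e \oplus \C E$, and by irreducibility $V = U(\fg).v_0$.

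The dual relation $[f,f]=-2F$ gives $F = -f^2$, so by PBW the submodule $U(\fn_-).v_0$ is spanned by $\{f^{i}.v_0\}_{i \geq 0}$. Therefore there is a surjection $M(\lambda) \twoheadrightarrow V$, where $\lambda \in \fh^\ast$ is determined by $\lambda(H) = \mu$. Because $\dim V < \infty$ while $\dim M(\lambda) = \infty$, this surjection has nontrivial kernel, so $M(\lambda)$ is reducible. By the explicit analysis of $M(\lambda)$ carried out immediately before the lemma, reducibility is equivalent to $\lambda(H) \in 2\Z_{\geq 0}$; writing $\mu = 2N$ gives $\lambda = N\alpha$. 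Since $L(N\alpha) = M(N\alpha)/N(N\alpha)$ is the unique irreducible quotient of $M(N\alpha)$, we conclude $V \cong L(N\alpha)$. The dimension assertion is immediate from the basis $\{w_0, w_1, \ldots, w_{2N}\}$ of $L(N\alpha)$ exhibited in the excerpt, which has $2N+1$ elements.

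The only real subtlety is the parity of the highest weight vector $v_0$: if $v_0$ happens to be odd, then $V$ is the parity flip of $L(N\alpha)$, so the asserted isomorphism $V \cong L(N\alpha)$ must be read either in the ungraded category or up to parity reversal $\Pi$. I do not expect this to be a serious obstacle, and in any case it does not affect the dimension statement. Apart from this cosmetic point, the argument is essentially a direct application of the computations already performed in the excerpt.
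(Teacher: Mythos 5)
Your argument is correct and is exactly the route the paper intends: the lemma is stated without proof as a consequence of the Verma-module computation preceding it, and you supply the missing reduction (semisimplicity of $H$ via $\fg_{\bar{0}}\cong\fsl_2$, extraction of a highest weight vector, surjection from $M(\lambda)$, and the reducibility criterion $\lambda(H)\in 2\Z_{\geq 0}$) in the standard way. The parity caveat you raise is real but harmless here, since the paper works with the classification only up to parity reversal and the dimension count is unaffected.
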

\subsection{Lie superalgebra of type $B^{(1)}(0,l)$}
Fix a finite set $I$. Recall that a Kac Moody Lie algebra is a Lie algebra associated with a GCM $A=(a_{i,j})_{i,j \in I}$, i.e., a square matrix with $a_{i,j} \in \Z$ such that i) $a_{i,i}=2$ for any $i \in I$, ii) $a_{i,j}\leq 0$ for any $i \neq j$ and iii) $a_{i,j}=0$ implies $a_{j,i}=0$. 
Its building blocks are isomorphic to $\fsl(2)$. Similarly,  a class of Lie superalgebras had been considered in the article  \cite{Kac1978} associated with a pair $(A, \tau)$ of a GCM $A$ and the subset $\tau \subset I$ satisfying the additional condition iv) $a_{i,j} \in 2\Z$ for any $i \in \tau$ and $j \in I$. Its building blocks are isomorphic either to $\fsl(2)$ or to $\mathfrak{osp}(1\vert2)$. 
 
Recall the definition of the Lie superalgebra $\fg(A,\tau)$. Let $(\fh, \Pi, \Pi^\vee)$ be a realization of the GCM $A$ so that $\Pi=\{ \alpha_i \}_{i \in I} \subset \fh^\ast$ and $\Pi^\vee=\{ h_i\}_{i \in I}$ are sets of linearly independent vectors satisfying $\langle \alpha_i, h_j\rangle=a_{j,i}$ for any $i,j \in I$. 
$\fg(A,\tau)$ is the Lie superalgebra generated by $e_i, f_i$ ($i \in I$) and $\fh$ with the parity
 \[ \vert e_i \vert=\vert f_i\vert := \begin{cases} \; \bar{0}\; & i \in I \setminus \tau, \\ 
                                                                        \; \bar{1}\; & i \in \tau, \end{cases} \qquad \vert h\vert:=\bar{0} \quad h \in \fh, \]
equipped with the Lie superbracket $[\cdot, \cdot]$ satisfying
\begin{itemize}
\item[(S1)] $[h,h']=0$ for $h,h'\in \mathfrak{h}$,
\vskip 1mm
\item[(S2)] $\left\{\begin{array}{l}
[h,e_i]=\langle h,\alpha_i\rangle \, e_i,\\[3pt] 
\lbrack h,f_i\rbrack=-\langle h,\alpha_i\rangle \, f_i,
\end{array}\right.$ for
$h\in\mathfrak{h}$ and $0\leq i\leq l$,
\vskip 1mm
\item[(S3)] $[e_i,f_j]=\delta_{i,j}h_i$ for $0\leq i,j\leq l$,
\vskip 1mm
\item[(S4)] $\left\{\begin{array}{l}
\mathrm{ad}\big(e_i\big)^{1- a_{i,j}}\big(e_j\big)=0,\\[3pt] 
\mathrm{ad}\big(f_i\big)^{1- a_{i,j}}\big(f_j\big)=0,
\end{array}\right.$ for $0\leq i\ne j\leq l$. 
\end{itemize} 
A non-zero element $\alpha \in \fh^\ast$ is called a root if 
\[ \fg_\alpha:=\big\{\, x\in \fg(A,\tau)\, \big\vert\, [h,x]=\alpha(h)x \; \forall\, h \in \fh\, \big\} \neq \{0\}. \]
We denote the set of all roots by $\Delta$. We have the so-called root space decomposition with respect to $\fh$: 
$\fg(A,\tau)=\fh \oplus \bigoplus_{\alpha \in \Delta} \fg_\alpha.$
A root $\alpha \in \Delta$ is called \textit{even} (resp. \textit{odd}) if any non-zero element of $\fg_\alpha$ is even (resp. \textit{odd}). 
It is clear that a root $\alpha=\sum_{i \in I}m_i\alpha_i \in \Delta$ is even iff so is $\sum_{i \in \tau}m_i$.  Denote by 
$\Delta_{\bar{0}}$ and $\Delta_{\bar{1}}$ the set of even and odd roots, respectively. By definition, one has the following parity decomposition:
\[
\fg(A,\tau)=\fg(A,\tau)_{\bar{0}} \oplus \fg(A,\tau)_{\bar{1}}, 
\]
where $\fg(A,\tau)_{\bar{0}}=\fh \oplus \bigoplus_{\alpha \in \Delta_{\bar{0}}}\fg_\alpha$ and $\fg(A,\tau)=\bigoplus_{\alpha \in \Delta_{\bar{1}}} \fg_\alpha$. For $i \in I$, Let $s_{\alpha_i}$ be the reflection in $\fh^\ast$ defined by $s_{\alpha_i}(x)=x-x(h_i)\alpha_i$ and $W(A)$ be the subgroup of $GL(\fh^\ast)$ generated by $s_{\alpha_i}$ ($i \in I$). An element of $\Delta^{re}:=W.\Pi$ (resp. $\Delta^{im}:=\Delta \setminus \Delta^{re}$) is called a \textit{real} (resp. an \textit{imaginary}) root. \\

The finite Lie superalgebra $\fg_f$ of type $B(0,l)$ is, by definition, the Lie superalgebra $\fg(A_f,I_{f,odd})$ associated with the Cartan matrix $A_f=(a_{i,j})_{1\leq i,j\leq l}$ of type $B_l$, i.e., 
\[ A_f={\small \begin{pmatrix} 2 & -1 & 0 & \cdots & 0  \\ 
                                   -1 & 2 & -1 &\ddots & \vdots  \\
                                   0  & \ddots & \ddots & \ddots & 0 \\
                                   \vdots & \ddots & -1 & 2 & -1 \\
                                   0 & \cdots & 0 & -2 & 2 \end{pmatrix}}, \qquad I_{f,odd}=\{l\} \subset I_f=\{1,2,\cdots, l\}.  \]
This Lie superalgebra is also called $\mathfrak{osp}(1\vert 2l)$. 
\begin{rem} The root system $\Delta_f$ of the Lie superalgebra $\fg_f$ is of type $BC_l$, i.e., $\Delta_f=(\Delta_f)_s \cup (\Delta_f)_m \cup (\Delta_f)_l$ where
\begin{align*}
&(\Delta_f)_s:=\{ \, \pm \vep_i\, \vert\, 1\leq i\leq l\, \}, \qquad (\Delta_f)_l:=\{ \, \pm 2\vep_i\, \vert\, 1\leq i\leq l\, \}, \\
&(\Delta_f)_m:=\{\, \pm \vep_i \pm \vep_j\, \vert\, 1\leq i<j\leq l\,\}. 
\end{align*}
Its parity decomposition is given as follows: $\Delta_f=(\Delta_f)_{\bar{0}} \amalg (\Delta_f)_{\bar{1}}$ where
\[ (\Delta_f)_{\bar{0}}=(\Delta_f)_m \amalg (\Delta_f)_l, \qquad (\Delta_f)_{\bar{1}}=(\Delta_f)_s. \]
\end{rem}
The affine Lie superalgebra of type $B^{(1)}(0,l)$ is, by definition, the Lie superalgebra $\fg(A,I_{odd})$ associated with the same GCM $A=(a_{i,j})_{0\leq i,j\leq l}$ as for type $BC_l^{(2)}$ introduced in \S \ref{subsect:root-data} and the subset $I_{\mathrm{odd}}:=\{l\}$ of the index set $I:=\{0,1,\cdots, l\}$.  
Thus, its Dynkin diagram is 
 \begin{center}
\begin{tikzpicture}
\draw (0,0) circle (0.1); \draw (0,-0.1) node[below] {$\alpha_0$};
\draw (0.1, 0) -- (0.9,0); 
\draw (0.45, 0.1) -- (0.55,0); 
\draw (0.45, -0.1) -- (0.55,0); 
\draw (0.5, 0.1) node[above] {$4$}; 
\fill (1,0) circle (0.1);  \draw (1,-0.1) node[below] {$\alpha_1$};
\draw (1.5,0) node[right] {$(l=1)$,};
\end{tikzpicture}
\quad
\begin{tikzpicture}
\draw (0,0) circle (0.1); \draw (0,-0.1) node[below] {$\alpha_0$};
\draw (0.45, 0.1) -- (0.55,0); 
\draw (0.45, -0.1) -- (0.55,0); 
\draw (0.5,0.1)  node[above] {$2$};
\draw (0.1,0) -- (0.9,0) ;
\draw (1,0) circle (0.1);  \draw (1,-0.1) node[below] {$\alpha_1$};
\draw (1.1,0) -- (1.9,0); 
\draw (2,0) circle (0.1); 
\draw [dashed] (2.1,0) -- (3.9,0);
\draw (4,0) circle (0.1); 
\draw (4.1,0) -- (4.9,0); 
\draw (5,0) circle (0.1); \draw (5,-0.1) node[below] {$\alpha_{l-1}$};
\draw (5.1,0) -- (5.9,0);
\fill (6,0) circle (0.1);  \draw (6,-0.1) node[below] {$\alpha_{l}$};
\draw (5.45,0.1) -- (5.55,0) ;
\draw (5.45,-0.1) -- (5.55,0) ;
\draw (5.5,0.1) node[above] {$2$};
\draw (6.5,0) node[right] {$(l\geq 2)$.};
\end{tikzpicture}
\end{center}
Here, the black nodes signify that the corresponding simple root $\alpha_l$ is non-isotropic and odd. 
The Weyl group $W$ is isomorphic to the Weyl group of type $BC_l^{(2)}$, and the set of real roots $\Delta^{re}$ is of type $BCC_l$,
i.e., $\Delta^{re}=\Delta_f+\Z \delta$, where $\delta:=\alpha_0+2(\alpha_1+\alpha_2+\cdots+\alpha_l)$ is the \textit{even} positive isotropic root such that $\Delta^{im}=\Z \delta$. The parity decomposition $\Delta=\Delta_{\bar{0}} \amalg \Delta_{\bar{1}}$ is given by
\[ \Delta_{\bar{0}}=\big((\Delta_f)_m+\Z \delta\big) \amalg \big((\Delta_f)_l+\Z \delta\big)\amalg \Delta^{im}, \qquad \Delta_{\bar{1}}=(\Delta_f)_s+\Z \delta. \] 
We remark that the node $\alpha_l$ is the only node verifying
$a_{l,j} \in 2\Z$ for any $0\leq j\leq l$. In particular, the map $\psi$ defined in \eqref{def_psi-map} is nice. 

\begin{rem} It is known  that the the derived subalgebra of $\fg(A, I_{odd})$ can be realized as the universal central extension of the loop algebra over  $\fg_f$. In particular, it follows that $\mult (n \delta)=l$ for any $n \in \Z \setminus \{0\}$. 
\end{rem}
For later use, let $\fn_{\pm}$ be the Lie sub superalgebras of $\fg(A, I_{odd})$ generated by $e_i$ ($i \in I$) and $f_i$ ($i \in I$), respectively. One can define the category $\cO$ of $\fg(A, I_{odd})$-modules as in the Kac-Moody case. Set $\Delta_+:=\Delta \cap \Z_{\geq 0}\Pi$.

\subsection{Integrability highest weight modules} 
A highest weight $\fg(A, I_{odd})$-module is defined similarly. 
For $\Lambda \in \fh^\ast$, let $\C_\Lambda=\C v_\Lambda$ be the one 
dimensional \textit{even} module over $\fb_+:=\fh \oplus \fn_+$ defined by 
\[ h.v_\Lambda=\langle h, \Lambda \rangle v_\lambda \quad (h \in \fh), \qquad 
\fn_+.v_\Lambda=0. \]
The induced $\fg(A,I_{odd})$-module  
$M(\Lambda):=\Ind_{\fb_+}^{\fg(A,I_{odd})}\C_\Lambda$ is called 
the Verma module with highest weight $\Lambda$. The Verma module $M(\Lambda)$ possesses the maximal proper submodule, say $N(\Lambda)$. 
Denote its irreducible quotient 
$M(\Lambda)/N(\Lambda)$ by $L(\Lambda)$. The next lemma can be shown, with the aide of Lemma \ref{lemma:IFDR-osp}:
\begin{lemma} The highest weight $\fg(A, I_{odd})$-module $L(\Lambda)$ is integrable if and only if $\Lambda(h_i) \in \Z_{\geq 0}$ for $0\leq i<l$ and $\Lambda(h_l) \in 2\Z_{\geq 0}$, i.e., $\Lambda \in P_+$ and the level of $\Lambda$ is even.
\end{lemma}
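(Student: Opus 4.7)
The plan is to prove the equivalence by reducing to the rank-one subalgebra attached to each simple root and invoking the representation theory of $\fsl_2$ for the even nodes and of $\fosp(1\vert 2)$ for the odd node $\alpha_l$.

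For necessity, suppose $L(\Lambda)$ is integrable. For $0\leq i<l$, the subalgebra $\langle e_i, f_i, h_i\rangle \subset \fg(A,I_{\mathrm{odd}})$ is isomorphic to $\fsl_2$ (all generators being even), and integrability forces the irreducible $\fsl_2$-submodule generated by $v_\Lambda$, whose highest weight is $\Lambda(h_i)$, to be finite-dimensional; hence $\Lambda(h_i)\in \Z_{\geq 0}$. For $i=l$, a direct super-Jacobi computation shows that $e_l, f_l, h_l$, together with $E_l:=\tfrac12[e_l,e_l]$ and $F_l:=-\tfrac12[f_l,f_l]$, span a copy of $\fosp(1\vert 2)$ inside $\fg(A,I_{\mathrm{odd}})$ via the correspondence $h_l \leftrightarrow H$, $e_l \leftrightarrow e$, $f_l \leftrightarrow f$. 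Local nilpotency of $e_l, f_l$ on $L(\Lambda)$ makes the $\fosp(1\vert 2)$-submodule generated by $v_\Lambda$ an irreducible finite-dimensional $\fosp(1\vert 2)$-module, which by Lemma~\ref{lemma:IFDR-osp} is of the form $L(N\alpha)$ for some $N\in \Z_{\geq 0}$. Hence $\Lambda(h_l)=2N \in 2\Z_{\geq 0}$.

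For sufficiency, set $N_i:=\Lambda(h_i)\in\Z_{\geq 0}$ for $i<l$ and $N_l:=\Lambda(h_l)\in 2\Z_{\geq 0}$. The heart of the argument is to show that $f_i^{N_i+1}v_\Lambda=0$ in $L(\Lambda)$ for every $i$. For $i<l$, Kac's classical computation from the Serre relations (S4) (cf. \cite{Kac1990}, \S3.4) gives $[e_j, f_i^{N_i+1}]v_\Lambda=0$ for every $j$, so $f_i^{N_i+1}v_\Lambda$ is primitive of weight $\Lambda-(N_i+1)\alpha_i\ne\Lambda$ and must vanish by irreducibility. For $i=l$, the $\fosp(1\vert 2)$-Verma-module analysis in the appendix forces $f_l^{N_l+1}v_\Lambda=0$ already inside the $\fosp(1\vert 2)$-submodule generated by $v_\Lambda$ --- this is where the hypothesis $N_l \in 2\Z_{\geq 0}$ is indispensable, since $M(\lambda)$ over $\fosp(1\vert 2)$ is reducible if and only if $\lambda(H)\in 2\Z_{\geq 0}$ --- and the super-version of the Serre-relation computation kills $e_j f_l^{N_l+1}v_\Lambda$ for $j\neq l$. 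Standard (super) Kac-Moody technology then promotes this vanishing into integrability: the subspace of $L(\Lambda)$ on which every $e_i, f_i$ acts locally nilpotently is a $\fg(A,I_{\mathrm{odd}})$-submodule thanks to the local nilpotency of $\ad(e_i)$ and $\ad(f_i)$ on $\fg(A,I_{\mathrm{odd}})$ (itself a consequence of (S4)), and since it contains $v_\Lambda$ and $L(\Lambda)$ is irreducible, it equals all of $L(\Lambda)$.

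The main obstacle is the odd node $\alpha_l$: every step of the standard $\fsl_2$-based Kac-Moody argument must be re-examined through $\fosp(1\vert 2)$-representation theory, and the parity condition $\Lambda(h_l)\in 2\Z_{\geq 0}$ enters both in the necessity (via Lemma~\ref{lemma:IFDR-osp}) and in the sufficiency (via the reducibility criterion for $\fosp(1\vert 2)$-Verma modules recalled in the appendix). The equivalent formulation ``$\Lambda\in P_+$ and the level of $\Lambda$ is even'' is immediate from the co-labels $a_0^\vee=\cdots=a_{l-1}^\vee=2,\ a_l^\vee=1$, which yield $\Lambda(c)=2\sum_{i<l}\Lambda(h_i)+\Lambda(h_l)$; the parity of this level coincides with the parity of $\Lambda(h_l)$.
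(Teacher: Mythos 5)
Your proof is correct and follows exactly the route the paper intends: the paper gives no written proof, merely noting that the lemma ``can be shown with the aid of'' Lemma~\ref{lemma:IFDR-osp}, and your argument is the standard rank-one reduction ($\fsl_2$ at the even nodes, $\fosp(1\vert2)$ at $\alpha_l$) that this remark presupposes. The only cosmetic imprecision is in the necessity step for $i=l$: the $\fosp(1\vert2)$-submodule generated by $v_\Lambda$ is finite dimensional but not a priori irreducible, so one should apply Lemma~\ref{lemma:IFDR-osp} to its irreducible quotient (or argue directly from the formulas for $e.w_i$ in the appendix), which yields $\Lambda(h_l)\in 2\Z_{\geq 0}$ all the same.
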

Note that the level of i-th fundamental weight  $\Lambda_i \in \fh^\ast$ is the co-label $a_i^\vee$. \\

As any highest weight module is $\fh$-diagonalisable, we can speak of its formal (super-)character which is an element of $\cE$. For an object $V$ of the cartegory $\cO$, let $V=\bigoplus_{\lambda \in \cP(V)} V_\lambda$ be its weight space decomposition. Its character $\ch(V)$ and super-character $\sch(V)$ are the elements of $\cE$ defined by
\[ \ch(V)=\sum_{\lambda \in \cP(V)} (\dim V_\lambda)e^{\lambda}, \qquad \sch(V)=\sum_{\lambda \in \cP(V)}(\sdim V_\lambda)e^\lambda, \]
where for a $\Z/2\Z$-graded vector space $E=E_{\bar{0}}\oplus E_{\bar{1}}$, we set 
\[\sdim(E)=\dim E_{\bar{0}}-\dim E_{\bar{1}}. \]

Let $\rho:=\sum_{i=0}^l \in \Lambda_i$ be a Weyl vector of $\fg(A, I_{odd})$. For a $\Lambda \in P_+$ with even level, 
the formal character of the irreducible highest weight module $L(\Lambda)$ is given by the Weyl-Kac character formula: 
\[ \ch L(\Lambda)=\sum_{w \in W}\vep(w)\ch M(w(\Lambda+\rho)-\rho)=\frac{\sum_{ w \in W}\vep(w)e^{w(\Lambda+\rho)}}{\sum_{w \in W}\vep(w)e^{w(\rho)}}. \]
This is exactly the same as for the affine Lie algebra $\fg(A)$ (cf. \eqref{Weyl-Kac}). 
Hence, the super-character of $L(\Lambda)$ can be given by
\[ \sch L( \Lambda)=\sum_{w \in W} \vep(w)\psi(w)\sch M(w(\Lambda+\rho)-\rho), \]
where the map $\psi: W \longrightarrow \{ \pm 1\}$ is defined in \eqref{def_psi-map}. By the Poincaré-Birkhoff-Witt theorem, it follows that 
\[ \sch M(\lambda)=e^{\lambda}\frac{\prod_{\alpha \in \Delta_+ \cap \Delta_{\bar{1}}}(1-e^{-\alpha})^{\mult(\alpha)}}{\prod_{\alpha \in \Delta_+ \cap \Delta_{\bar{0}}}(1-e^{-\alpha})^{\mult(\alpha)}} \]
which implies the super-denominator identity:
\begin{equation}\label{eqn:super-denominator-id}
 \sum_{w \in W} \vep(w)\psi(w)e^{w\rho}=e^{\rho}\frac{\prod_{\alpha \in \Delta_+ \cap \Delta_{\bar{0}}}(1-e^{-\alpha})^{\mult(\alpha)}}{\prod_{\alpha \in \Delta_+ \cap \Delta_{\bar{1}}}(1-e^{-\alpha})^{\mult(\alpha)}}. 
 \end{equation}
 As a corollary, one sees that
\[
\sch L(\Lambda)
=\frac{\sum_{ w \in W}\vep(w)\psi(w)e^{w(\Lambda+\rho)}}{\sum_{w \in W}\vep(w)\psi(w)e^{w(\rho)}}. \]
Thus, it is exactly the twisted-character \eqref{twisted-Weyl-Kac} !
\newpage

\bibliographystyle{plain}
\bibliography{Elliptic}

\begin{thebibliography}{10}

\bibitem{BruhatTits1972}
F.~Bruhat and J.~Tits.
\newblock Groupes r\'{e}ductifs sur un corps local.
\newblock {\em Inst. Hautes \'{E}tudes Sci. Publ. Math.}, (41):5--251, 1972.

\bibitem{Carter2005}
R.~W. Carter.
\newblock {\em Lie algebras of finite and affine type}, volume~96 of {\em
  Cambridge Studies in Advanced Mathematics}.
\newblock Cambridge University Press, Cambridge, 2005.

\bibitem{IoharaSaito202}
Kenji Iohara and Yoshihisa Saito.
\newblock Macdonald identities$:$ revisited.
\newblock {\em Jour. LIe Th.}

\bibitem{Kac1969}
V.~G. Kac.
\newblock Automorphisms of finite order of semisimple {L}ie algebras.
\newblock {\em Funk. Anal. i Prilo\v{z}.}, 3(3):94--96, 1969.

\bibitem{Kac1978}
V.~G. Kac.
\newblock Infinite-dimensional algebras, {D}edekind's {$\eta $}-function,
  classical {M}\"obius function and the very strange formula.
\newblock {\em Adv. in Math.}, 30(2):85--136, 1978.

\bibitem{Kac1990}
V.~G. Kac.
\newblock {\em Infinite-dimensional {L}ie algebras}.
\newblock Cambridge University Press, Cambridge, third edition, 1990.

\bibitem{KacPeterson1984}
V.~G. Kac and D.~H. Peterson.
\newblock Infinite-dimensional {L}ie algebras, theta functions and modular
  forms.
\newblock {\em Adv. in Math.}, 53(2):125--264, 1984.

\bibitem{Looijenga1976/77}
Eduard Looijenga.
\newblock Root systems and elliptic curves.
\newblock {\em Invent. Math.}, 38(1):17--32, 1976/77.

\bibitem{Macdonald1972}
I.~G. Macdonald.
\newblock Affine root systems and {D}edekind's {$\eta $}-function.
\newblock {\em Invent. Math.}, 15:91--143, 1972.

\bibitem{Moody1969}
R.~V. Moody.
\newblock Euclidean {L}ie algebras.
\newblock {\em Canad. J. Math.}, 21:1432--1454, 1969.

\bibitem{Saito1985}
K.~Saito.
\newblock Extended affine root systems. {I}. {C}oxeter transformations.
\newblock {\em Publ. Res. Inst. Math. Sci.}, 21(1):75--179, 1985.

\bibitem{Serre1970}
J.-P. Serre.
\newblock {\em Cours d'arithm\'etique}, volume~2 of {\em Collection SUP: ``Le
  Math\'ematicien''}.
\newblock Presses Universitaires de France, Paris, 1970.

\bibitem{Tits1979}
J.~Tits.
\newblock Reductive groups over local fields.
\newblock In {\em Automorphic forms, representations and {$L$}-functions
  ({P}roc. {S}ympos. {P}ure {M}ath., {O}regon {S}tate {U}niv., {C}orvallis,
  {O}re., 1977), {P}art 1}, Proc. Sympos. Pure Math., XXXIII, pages 29--69.
  Amer. Math. Soc., Providence, R.I., 1979.

\end{thebibliography}

\end{document}